   \newtheorem{theorem}{Theorem}[section]
   \newtheorem{proposition}[theorem]{Proposition}
   \newtheorem{lemma}[theorem]{Lemma}
   \newtheorem{corollary}[theorem]{Corollary}
\theoremstyle{definition}
   \newtheorem{example}[theorem]{Example}
   \newtheorem{remark}[theorem]{Remark}
\newcommand{\RR}{{\mathbb{R}}}
\newcommand{\QQ}{{\mathbb{Q}}}
\newcommand{\ZZ}{{\mathbb{Z}}}
\newcommand{\bfm}{{\mathbf{m}}}
\newcommand{\cA}{{\mathcal A}}
\newcommand{\cF}{{\mathcal F}}
\newcommand{\cL}{{\mathcal L}}
\newcommand{\Ann}{\operatorname{Ann}}
\newcommand{\cLnt}{{\mathcal L^{nt}}}
\newcommand{\Vol}{\operatorname{Vol}}
\newcommand{\MWj}{\operatorname{MW_j}}
\newcommand{\MWd}{\operatorname{MW_d}}
\newcommand{\CH}{\operatorname{CH}}
\newcommand{\Link}{\operatorname{Link}}
\newcommand{\Span}{\operatorname{Span}}
\newcommand{\Star}{\operatorname{Star}}
\newcommand{\Staro}[1]{\operatorname{Star^\circ_{#1}}}
\newcommand{\isomto}{\stackrel{\sim}{\to}}
\newcommand{\isom}{\simeq}
\newcommand{\Po}{Poincar\'e }
\newcommand{\MW}{Minkowski weight }
\newcommand{\setmin}{\,\protect%
\begin{picture}(8,10)\qbezier(1,5.5)(4,4.)(7,2.5)\end{picture}\,}
\newcommand{\bs}{\smallsetminus}
\begin{document}
\title{Mixed volumes of Matroids}

\author{Andy Hsiao, Kalle Karu, Jonathan Yang}
\thanks{This work was partially supported by an NSERC Discovery grant.}
\address{Mathematics Department\\ University of British Columbia \\
  1984 Mathematics Road\\
Vancouver, B.C. Canada V6T 1Z2}
\email{ahsiao@math.ubc.ca, karu@math.ubc.ca, jonathan@math.ubc.ca}
% \date{Dec 1, 2001}

\begin{abstract}
Our aim in this article is to compute the mixed volume of a matroid. We give two computations. The first one is based on the integration formula for complete fans given by Brion. The second computation is a step-by-step method using deletion of elements in the matroid as in \cite{BHM}. 
\end{abstract}

\maketitle

%\addtocounter{section}{-1}
%\tableofcontents

\section{Introduction.}

We consider graded algebras of the form
\[ H = \RR[x_1,\ldots, x_n]/I\]
that satisfy \Po duality. This means that there is an isomorphism $\deg: H^d \isomto \RR$ for some $d\geq 0$, and the bilinear pairing defined by multiplication:
\[ H^i \times H^{d-i} \to H^d \isomto \RR\]
is nondegenrate for all $i\geq 0$. Such algebras are called standard graded, Artininan, Gorenstein, with socle in degree $d$. The mixed volume of this algebra is the linear function on the space of homogeneous degree $d$ polynomials:
\[ \Vol_H: \RR[x_1,\ldots, x_n]_d \to H^d \isomto \RR.\]

One can write the mixed volume itself as a homogeneous polynomial. The dual space of $\RR[x_1,\ldots, x_n]_d$ is $\RR[\partial_1,\ldots, \partial_n]_d$, where we write $\partial_i = \partial/\partial x_i$ for the partial derivatives. Then $\Vol_H$ is a homogeneous degree $d$ partial derivative with constant coefficients.

From now on we will switch the variables $x_i, \partial_i$. We write the algebra as 
\[ H = \RR[\partial_1,\ldots, \partial_n]/I,\]
with the mixed volume a homogeneous degree $d$ polynomial
\[ \Vol_H \in \RR[x_1,\ldots, x_n]_d.\]
The advantage of this convention is that now the ideal $I$ is the annihilator of the polynomial $\Vol_H(x_1,\ldots,x_d)$:
\[ I = \{ D \in \RR[\partial_1,\ldots, \partial_n]\ | \ D(\Vol_H) = 0\}.\]

The Chow ring $\CH(M)$ of a matroid $M$ was defined by Feichtner and Yuzvinsky \cite{FeichtnerYuzvinsky}. It is an algebra as described above, with a canonical degree map $\CH^d(M)\isomto\RR$, defining the mixed volume $\Vol_M$. This mixed volume has been studied extensively. The mixed volume polynomial satisfies various log-concavity properties \cite{AHK, HuhBranden}. Eur \cite{Eur} has given a combinatorial interpretation of the coefficients in the polynomial. In \cite{BES} the mixed volume polynomial was expressed in suitable variables so that all of its coefficients become equal to zero or one. Similar to our approach, in \cite{NOR} the authors study mixed volumes of general fans and then apply the results to the case of matroids.

Our aim in this article is to compute the mixed volume of a matroid in two different ways. The first formula generalizes Brion's integration map for complete fans \cite{Brion}. It gives the mixed volume in the diagonal form -- a linear combination of degree $d$ powers. Such a diagonal form is useful for studying Hodge-Riemann type quadratic forms as these forms will also be diagonalized.

Our second computation of the mixed volume follows the semismall decomposition of the Chow ring in \cite{BHM}. It gives an inductive method for computing the mixed volume from simpler matroids. The mixed volume polynomial reflects the orthogonal decomposition of the Chow ring given in \cite{BHM}.

We will next recall the definition of the Chow ring of a matroid and then describe our main results. We refer to \cite{Oxley} for background on matroids, and we will mostly follow the notation in \cite{BHM} for Chow rings of matroids.

\subsection{Matroids}

Let $M$ be a matroid of rank $d+1$ on the set $E=\{1,2,\ldots,n+1\}$. We will use the definition of a Matroid by its lattice of flats $\cL(M)$. We  assume that the lattice has the minimal flat $\emptyset$ and the maximal flat $E$. All other flats are called nontrivial. We denote the set of nontrivial flats by 
\[ \cLnt(M) = \cL(M)\setmin\{ \emptyset, E\}.\]

To a matroid is associated its Bergman fan $\Delta=\Delta_M$. The fan has rays $\rho_F$ corresponding to nontrivial flats $F\in\cLnt(M)$. Every chain $\cF$ in $\cLnt(M)$:
\[ \cF: F_1< F_2< \cdots < F_m,\]
corresponds to a simplicial cone $\sigma_\cF \in \Delta$ with rays $\rho_{F_1}, \rho_{F_2},\ldots, \rho_{F_m}$. The Bergman fan is a simplicial fan of dimension $d$. It is embedded in $\RR^n$ as follows. Let $\RR^E$ have the standard basis $e_1,\ldots, e_{n+1}$. For a subset $F\subseteq E$, write $e_F = \sum_{i\in F} e_i$. Then $\Delta$ is embedded in $\RR^E/e_E \isom \RR^n$ so that the ray $\rho_F$ is the ray generated by the vector $e_F$ in the quotient. 

The Chow ring of $M$ is the quotient 
\[ \CH(M) = \cA(\Delta)/ \bfm\cA(\Delta).\]
Here $\cA(\Delta)$ is the algebra of piecewise polynomial functions on $\Delta$ (also known as the Stanley-Reisner ring of $\Delta$), and $\bfm$ is the ideal generated by globally linear functions on $\RR^E/e_E$. The Chow ring is known to have a canonical degree map 
\[ \deg: \CH^d(M)\isomto \RR\]
and a nondegenerate \Po duality pairing defined by multiplication.

\begin{example}  \label{ex1}
Consider the matroid $M$ on the set $E=\{1,2,3,4\}$ with nontrivial flats 
\[ 1, 2, 3, 4, 14, 24, 34, 123.\]
This is a representable matroid corresponding to 4 vectors in $\RR^3$, with the first three vectors lying on a plane. The Bergman fan of $M$ is a $2$-dimensional fan in $\RR^3$. It is supported on the union of three half-planes, with each half-plane divided into $3$ maximal cones.

\begin{figure}[ht]
    \centering
    \includegraphics{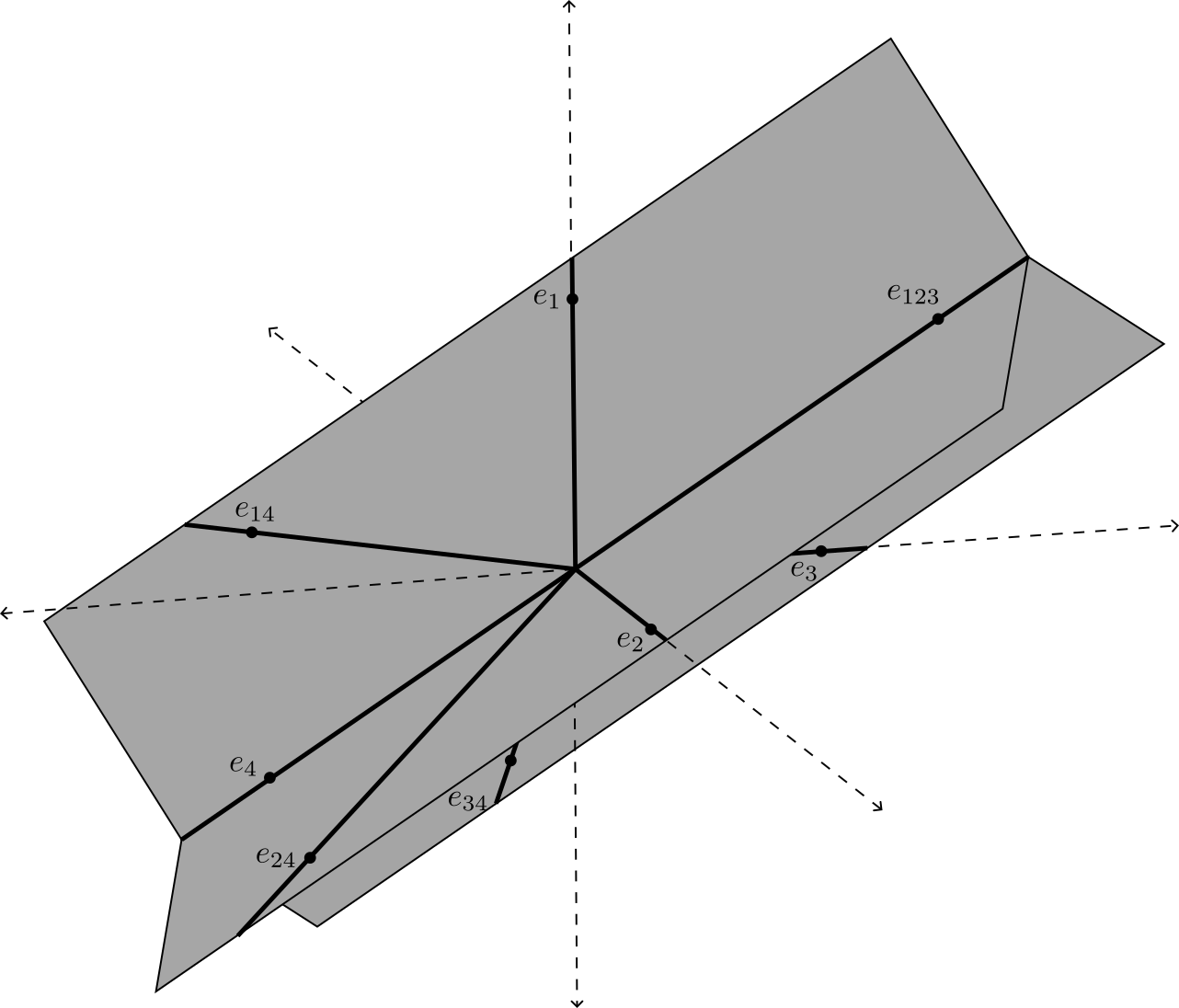}
    \caption{Bergman fan for Example~\ref{ex1}}
    \label{ex1-Bergman-Fan}
\end{figure}

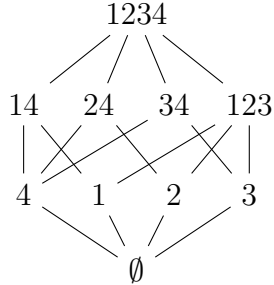
\begin{figure}[ht]
    \centering
    \begin{tikzpicture}
        \node (1234) at (0,0) {\(1234\)};
        
        \node [below of = 1234, yshift = -2mm, xshift = -15mm] (14) {\(14\)};
        \node [below of = 1234, yshift = -2mm, xshift = -5mm] (24) {\(24\)};
        \node [below of = 1234, yshift = -2mm, xshift = 5mm] (34) {\(34\)};
        \node [below of = 1234, yshift = -2mm, xshift = 15mm] (123) {\(123\)};
        
        \draw (1234) to (14);
        \draw (1234) to (24);
        \draw (1234) to (34);
        \draw (1234) to (123);
        
        \node [below of = 24, yshift = -2mm] (1) {\(1\)};
        \node [below of = 34, yshift = -2mm] (2) {\(2\)};
        \node [below of = 123, yshift = -2mm] (3) {\(3\)};
        \node [below of = 14, yshift = -2mm] (4) {\(4\)};
        
        \draw (14) to (1); \draw (14) to (4);
        \draw (24) to (2); \draw (24) to (4);
        \draw (34) to (3); \draw (34) to (4);
        \draw (123) to (1);
        \draw (123) to (2);
        \draw (123) to (3);
        
        \node [below of = 2, xshift = -5mm] (min) {\(\emptyset\)};
        \draw (1) to (min);
        \draw (2) to (min);
        \draw (3) to (min);
        \draw (4) to (min);
    \end{tikzpicture}
    \caption{Lattice of flats for Example~\ref{ex1}}
    \label{fig:enter-label}
\end{figure}
\end{example}

\subsection{First computation}

Our first result is a formula for the mixed volume that generalizes the integration formula of Brion \cite{Brion} for complete fans.

For each $i\in E = \{ 1,2, \ldots, n+1\}$, choose a general vector $v_i \in \RR^{d-1}$ such that $\sum_i v_i = 0$.  For any subset $F\subseteq E$, let  $v_F = \sum_i v_i$.  Consider a maximal cone $\sigma\in\Delta$ corresponding to a  maximal chain of nontrivial flats
\[  F_1 < F_2 < \cdots < F_d ,\]
and let $A_\sigma$ be the $(d-1)\times d$ matrix with columns $v_{F_1}, v_{F_2},  \ldots, v_{F_d}$. We augment this matrix to a $d\times d$ matrix $\hat{A}_\sigma$ by adding the first row of variables $(x_{F_1}, x_{F_2}, \ldots, x_{F_d})$. Then the determinant of $\hat{A}_\sigma$ is a linear form in the variables $x_{F_i}$:
\[ \det \hat{A}_\sigma = a_1 x_{F_1} + a_2 x_{F_2} + \cdots +  a_d  x_{F_d}.\] 
 If $v_i$ are general, then all $d$ coefficients $a_i$ are nonzero. Let $c_\sigma = d! a_1 a_2 \cdots a_d$. Then the degree $d$ form 
 \[  \frac{(\det \hat{A}_\sigma)^d}{c_\sigma} \]
 is scaled so that the monomial $x_{F_1} x_{F_2} \cdots x_{F_d}$ appears with coefficient $1$ in it.
 
 \begin{theorem} \label{thm-MV-first-computation}
 The mixed volume of a matroid $M$ is 
 \[ \Vol_M = \sum_\sigma  \frac{(\det \hat{A}_\sigma)^d}{c_\sigma},\]
where the sum runs over all maximal cones $\sigma=\sigma_\cF \in \Delta$, corresponding to maximal chains $\cF$  in $\cLnt(M)$.  
\end{theorem}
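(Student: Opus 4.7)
The plan is to exploit the characterization of the mixed volume $\Vol_M$ as the unique (up to scalar) degree-$d$ polynomial in $\{x_F\}$ whose annihilator under differentiation is the defining ideal $I$ of $\CH(M)$. The scalar is pinned down by the normalization $\partial_{F_1}\cdots\partial_{F_d}(\Vol_M) = 1$ on any maximal chain $F_1<\cdots<F_d$, coming from the degree map $\deg: \CH^d(M)\isomto\RR$. I would therefore verify that the proposed polynomial $P := \sum_\sigma (\det \hat A_\sigma)^d / c_\sigma$ is (i) annihilated by each generator of $I$, and (ii) correctly normalized.

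The Stanley--Reisner part of (i) and the normalization (ii) are immediate. For an antichain $S \subset \cLnt(M)$, the differential $\prod_{F \in S}\partial_F$ annihilates $P$ term-wise, since each $\det \hat A_\sigma$ involves only variables $x_F$ for $F$ in the chain defining $\sigma$. For the normalization, fix a maximal chain $\cF=(F_1<\cdots<F_d)$ with cone $\sigma_\cF$; only this cone contributes to $\partial_{F_1}\cdots\partial_{F_d} P$, and the coefficient of $x_{F_1}\cdots x_{F_d}$ in $(\sum_i a_i x_{F_i})^d$ is $d!\prod_i a_i = c_{\sigma_\cF}$ by the multinomial theorem, so $\partial_{F_1}\cdots\partial_{F_d} P = 1$.

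The essential step is verifying that each linear relation $D_\ell := \sum_F \ell(e_F)\partial_F$, $\ell \in (\RR^E/e_E)^*$, annihilates $P$. Direct differentiation yields
\[
D_\ell P \;=\; d \sum_\sigma \frac{(\det \hat A_\sigma)^{d-1}}{c_\sigma}\, L_\sigma^\ell, \qquad L_\sigma^\ell := \sum_i \ell(e_{F_i^\sigma})\, a_i^\sigma.
\]
The critical input is that the $a_i^\sigma$ are, up to signs, the $(d-1)\times(d-1)$ minors of $A_\sigma$, and so satisfy the unique linear syzygy $\sum_i a_i^\sigma v_{F_i^\sigma} = 0$ in $\RR^{d-1}$ among the columns of $A_\sigma$. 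Consequently, whenever $\ell = \phi\circ\pi$ factors through $\pi: \RR^E/e_E\to\RR^{d-1}$, $e_i\mapsto v_i$, we get $L_\sigma^\ell = \phi(0) = 0$ term-wise, handling the $(d-1)$-dimensional subspace $\pi^*((\RR^{d-1})^*)$ of linear relations.

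The main obstacle is the remaining $n-d+1$ linearly independent relations, those $\ell\notin\pi^*((\RR^{d-1})^*)$. My approach for these is to augment $\pi$ by the extra coordinate $\ell$ to a generic map $\tilde\pi: \RR^E/e_E\to\RR^d$ and view the required identity
\[
\sum_\sigma \frac{(\det \hat A_\sigma)^{d-1} L_\sigma^\ell}{c_\sigma} \;=\; 0
\]
as an instance of Brion-type pole cancellation applied to the pushforward $\tilde\pi_*\Delta$. Since $\Delta$ is balanced, $\tilde\pi_*\Delta$ is a balanced $d$-cycle in $\RR^d$ whose cone indexed by $\sigma$ is spanned by $\tilde v_{F_i^\sigma} := (v_{F_i^\sigma}, \ell(e_{F_i^\sigma}))$, with $L_\sigma^\ell$ equal to its oriented $d\times d$ volume. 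The desired polynomial identity should then follow by expanding Brion's classical integration formula for this complete arrangement of simplicial cones and matching the appropriate polynomial coefficient, the pole cancellations being guaranteed by the tropical balancing of $\Delta$ at each codimension-one cone. Making this balancing-to-cancellation translation precise is the key technical step I expect to require the most care.
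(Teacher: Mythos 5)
Your overall strategy --- characterize $\Vol_M$ as the unique degree-$d$ polynomial annihilated by the defining ideal of $\CH(M)$, normalized by $\partial_{F_1}\cdots\partial_{F_d}(\Vol_M)=1$, and then check the candidate $P$ against the generators of that ideal --- is sound, and it is in essence the dual formulation of what the paper does: the paper constructs the functional $\phi_w$ on $\CH^d$ attached to the constant Minkowski weight via a generalized Brion integration map (Lemmas~\ref{lem-Brion} and~\ref{lem-Brion-n}) and then evaluates at a general point (Theorem~\ref{thm-MV-eval}, Corollary~\ref{cor-MV-derivative2}). Your verification of the Stanley--Reisner relations and of the normalization is correct, and the term-wise treatment of the relations $\ell=\phi\circ\pi$ via the syzygy $\sum_i a_i^\sigma v_{F_i^\sigma}=0$ is a nice, purely local observation; the paper instead disposes of a sub-system of parameters by a degree argument that already presupposes pole cancellation.

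The remaining $n-d+1$ linear relations are, however, the entire technical content of the theorem, and your plan for them has a concrete flaw as stated: $\tilde\pi_*\Delta$ is \emph{not} a complete fan. The images of the maximal cones of a Bergman fan under a generic projection to $\RR^d$ overlap and do not fit together into a fan, so ``Brion's classical integration formula for complete fans'' does not apply to it. What is needed is the extension of Brion's formula to an arbitrary $d$-dimensional fan mapped to $\RR^d$ (injectively on each cone, but not globally) and equipped with a Minkowski weight; this is exactly the paper's Lemma~\ref{lem-Brion}, whose proof checks, one codimension-one cone $\tau$ of $\Delta$ at a time, that the balancing condition cancels the pole of $\sum_\sigma w_\sigma f_\sigma/\chi_\sigma$ along the hyperplane spanned by $\tau$. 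Even granting pole-freeness, two further ingredients are missing from your sketch: the degree argument ($\phi_w$ is a degree $-d$ map of $\RR[t_1,\dots,t_d]$-modules, hence kills $\theta\cdot\cA^{d-1}(\Delta)$ for $\theta$ among the chosen coordinates), and the fact that these coordinate relations together with the local relations (\ref{eq-extra-rel}) at codimension-one cones span all of $\bfm\cA^{d-1}(\Delta)$ in degree $d$ (Lemma~\ref{lem-extra-rel}). Your phrase ``matching the appropriate polynomial coefficient'' conceals precisely this reduction: for non-squarefree monomials the required cancellation is not a single balancing condition at one wall. So the proposal correctly locates where the work lies, and the gap is fillable, but the proof is not yet there.
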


The mixed volume in the theorem does not depend on how we choose the vectors $v_i$. The choice needs to be general in the sense that all $c_\sigma \neq 0$. 
%Because of the scaling factor $c_\sigma$, it does not matter how we order the columns in $\hat{A}_\sigma$.

\begin{example}
Consider the matroid in Example~\ref{ex1}. Let the vectors $v_1,\ldots, v_4$ in $\RR^1$ be $-a,-b,-c,a+b+c$. Define the augmented matrix
\[ \hat{A} = \begin{bmatrix} 
x_1 & x_2 & x_3 & x_4& x_{14} & x_{24}& x_{34}& x_{123} \\
-a & -b & -c & a+b+c & b+c & a+c & a+b & -a-b-c
\end{bmatrix}.\]
The matrices $\hat{A}_\sigma$ are the $2\times 2$ submatrices of $\hat{A}$, where we choose the two columns corresponding to the rays of $\sigma$. As an example, if $\sigma$ corresponds to the chain $4 < 14$, then 
\[ \det \hat{A}_\sigma = \det \begin{bmatrix} 
 x_4& x_{14}  \\
 a+b+c & b+c 
\end{bmatrix}
= (b+c) x_4 - (a+b+c) x_{14}.\]
The contribution of the cone $\sigma$ to the mixed volume is 
\[ \frac{(\det \hat{A}_\sigma)^d}{c_\sigma} = \frac{\big((b+c) x_4 - (a+b+c) x_{14}\big)^2}{-2(b+c)(a+b+c)}.\]
The sum of the fractions over all $\sigma$ is a polynomial in $x_F$ with rational coefficients:
\begin{gather*} \Vol_M = -\frac{x_1^2  +x_2^2  +x_3^2  + 2 x_4^2  +x_{14}^2  + x_{24}^2 + x_{34}^2  + x_{123}^2}{2} \\
  + x_4 (x_{14} + x_{24} +  x_{34})  + x_{123}( x_1+ x_2 +x_3) +  x_1 x_{14}   +  x_2 x_{24}  +  x_3 x_{34} .
  \end{gather*}
\end{example}

\subsection{Second computation}
In \cite{BHM} the Chow ring of a matroid was decomposed using the  deletion operation $M\bs i$. We give a formula for computing the mixed volume $\Vol_M$ from $\Vol_{M\bs i}$. This can be used inductively to find the mixed volume of $M$. To simplify notation, we will write the set $\{i\}$ simply as $i$ and use it in expressions such as $F\bs i$, $F\cup i$.

Let $i \in E$. Then $M\bs i$ is a matroid on $E\bs i$ with the set of flats $F\bs i$ over all $F\in \cL(M)$. The element $i$ is called a  coloop if the rank of $M\bs i$ is smaller than the rank of $M$. The Bergman fan of $M\bs i$ is the projection of the Bergman fan of $M$ from the span of $e_i$. This projection takes cones onto cones. In the case where $i$ is a coloop, both $i$ and $E\bs i$ are flats in $M$. The corresponding two rays map to zero by the projection. In the case of  a non-coloop, only the ray $\rho_i$ maps to zero if $i$ is a flat of $M$. If $i$ is not a flat of $M$, then deleting $i$ does not change the lattice of flats or the Chow ring. In this case, let the mixed volume $\Vol_{M\bs i}(x_G)$ have variables indexed by nontrivial flats $G\in \cLnt(M\bs i)$. Then 
\[ \Vol_M = \Vol_{M\bs i}(x_{\overline{G}}),\]
where $\overline{G}$ is the closure of $G$ in $M$.
We will assume from now on that $i$ is a flat of $M$. 

Following \cite{BHM}, define the set $S_i = \{F_1, F_2, \ldots, F_k\}$ consisting of all subsets $F\subset E$ such that $F$ and $F\cup i$ are distinct nontrivial flats of $M$. We choose a total order on $S_i$ that refines the partial order of inclusion. Our first step is to decompose the projection map $\pi: \Delta_M \to \Delta_{M\bs i}$ as a sequence of maps of fans
\begin{equation} \label{eq-fact} 
\pi: \Delta_M=\Delta_0 \to \Delta_1 \to \cdots \to \Delta_k \to \Delta_{M\bs i}.
\end{equation}
Here for $j=1,\ldots, k$, the map $\Delta_{j-1} \to \Delta_j$ is the star subdivision of $\Delta_j$ at the $2$-dimensional cone with rays $\rho_i, \rho_{F_j}$. The subdivision adds the new ray $\rho_{F_j\cup i}$. The last step $\Delta_k \to \Delta_{M\bs i}$ is simply the projection from the span of $e_i$. 

\begin{example}
Consider the matroid in Example~\ref{ex1}. If $i=3$, then $i$ is a non-coloop. The projection of $\Delta_M$ from the span of $e_3$ is again a $2$-dimensional fan. It is the fan of the Boolean matroid $M\bs 3$. In this case $S_3 = \{4\}$. We decompose the projection as a composition of two maps. The first map performs the inverse of a star subdivision and removes the ray $\rho_{14}$. The second map projects the fan.

If we take $i=4$ instead, then $i$ is a coloop. The projection maps the fan $\Delta_M$ to a $1$-dimensional fan in $\RR^2$. In this case $S_4 = \{1,2,3\}.$ Hence the projection can be decomposed as 4 steps, where the first three steps remove the rays $\rho_{14}, \rho_{24}, \rho_{34}$.  The last step contracts both $\rho_4$ and  $\rho_{123}$ to zero.
\end{example}

When $\hat\Delta$ is a subdivision of $\Delta$, then the Chow rings of the two fans are isomorphic in degree $d$. Hence there is a natural way to extend the mixed volume of $\Delta$ to a mixed volume of $\hat\Delta$. In case of a star subdivision at a $2$-dimensional cone this can be described as follows. 
Consider a fan $\Delta$ with rays indexed by a set $L$. Let $\hat{\Delta} \to \Delta$ be the star subdivision map of fans where a cone in $\Delta$ generated by $v_p, v_q$ for some $p,q\in L$ is subdivided by adding a new ray generated by $v_r=v_p+v_q$. Define a derivative $D(p,q)$ in $\RR[[z, \partial_p, \partial_q]]$:
 \begin{gather*} D(p,q) = \frac{\partial_p e^{z \partial_q} - \partial_q e^{z\partial_p}}{\partial_p-\partial_q} \\
 = 1 - \frac{z^2}{2!}\partial_p \partial_q -  \frac{z^3}{3!}\partial_p \partial_q ( \partial_p+\partial_q) - \frac{z^4}{4!}\partial_p \partial_q ( \partial_p^2+ \partial_p \partial_q +\partial_q^2) - \ldots .
 \end{gather*}
Let us also define the derivative $D(p,q,r)$ that acts on polynomials $V\in \RR[x_s]_{s\in L}$  by first applying the derivative $D(p,q)$ and then setting $z=x_r-x_p-x_q$:
\[ D(p,q,r) V = D(p,q) V |_{z=x_r-x_p-x_q}.\]

\begin{theorem} \label{thm-MV-star-subdiv}
Let $\hat{\Delta} \to \Delta$ be the star subdivision at the cone with rays $\rho_p, \rho_q$ that adds a new ray $\rho_r$. Then 
\[ \Vol_{\hat\Delta} = D(p,q,r) \Vol_\Delta.\]
\end{theorem}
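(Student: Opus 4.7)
The plan is to identify $\Vol_{\hat\Delta}$ via its Macaulay dual characterization: it is the unique (up to scalar) polynomial in $\RR[x_s,x_r]_d$ annihilated by the Chow ring ideal $I_{\hat\Delta}\subset\RR[\partial_s,\partial_r]$. I will verify that $\tilde V := D(p,q,r)\Vol_\Delta$ is annihilated by $I_{\hat\Delta}$ and then pin down the scalar by matching a single coefficient. The ideal $I_{\hat\Delta}$ is generated by the pulled-back linear relations $\ell_u := \sum_s u(v_s)\partial_s + u(v_p+v_q)\partial_r$ (for any linear functional $u$ on the ambient space), the new Stanley--Reisner relation $\partial_p\partial_q = 0$ (since $\{p,q\}$ no longer spans a cone), and further SR relations involving $\partial_r$ that reflect the refined cone structure.

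Writing $W(x_s,z) := D(p,q)\Vol_\Delta$ so that $\tilde V = W|_{z=x_r-x_p-x_q}$, and using $\partial_z D(p,q) = -\partial_p\partial_q\,S$ with $S := (e^{z\partial_p}-e^{z\partial_q})/(\partial_p-\partial_q)$, the chain rule yields the basic derivative formulas $\partial_r\tilde V = -\partial_p\partial_q S\,\Vol_\Delta|_z$, $\partial_s\tilde V = D(p,q)\partial_s\Vol_\Delta|_z$ for $s\notin\{p,q,r\}$, and $\partial_p\tilde V = (D(p,q)\partial_p\Vol_\Delta + \partial_p\partial_q S\,\Vol_\Delta)|_z$, with the symmetric formula for $\partial_q\tilde V$. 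Applying $\ell_u$ to $\tilde V$ then simplifies to
\[
\ell_u\tilde V = D(p,q)\Bigl(\sum_s u(v_s)\partial_s\Bigr)\Vol_\Delta\Big|_z + \bigl[u(v_p)+u(v_q)-u(v_p+v_q)\bigr]\,\partial_p\partial_q S\,\Vol_\Delta\Big|_z,
\]
and both terms vanish: the first because $\sum_s u(v_s)\partial_s\in I_\Delta$ annihilates $\Vol_\Delta$, the second by linearity of $u$.

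The key step is the new SR relation $\partial_p\partial_q = 0$. A further chain-rule calculation, using $\partial_z S = e^{z\partial_p}+e^{z\partial_q}-D(p,q)$, gives
\[
\partial_p\partial_q\tilde V = \partial_p\partial_q\bigl[2D(p,q) + (\partial_p+\partial_q)S - (e^{z\partial_p}+e^{z\partial_q})\bigr]\Vol_\Delta\Big|_z,
\]
so the proof reduces to the operator identity
\[
2D(p,q) + (\partial_p+\partial_q)S = e^{z\partial_p}+e^{z\partial_q},
\]
which, after clearing the common denominator $\partial_p-\partial_q$, becomes a one-line algebraic verification. The remaining SR relations involving $\partial_r$ or products of $\partial_p,\partial_q$ with other rays follow from the same derivative formulas: each such product applied to $\tilde V$ produces an expression carrying a factor $\partial_p\partial_q\partial_{s_1}\cdots\partial_{s_m}\Vol_\Delta$, which is annihilated whenever $\partial_p\partial_q\partial_{s_1}\cdots\partial_{s_m}\in I_\Delta$. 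A short case analysis over cone types in $\hat\Delta$ matches these conditions exactly to the SR relations of $\hat\Delta$.

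For the normalization, pick any maximal cone $\sigma'$ of $\hat\Delta$ that does not use $\rho_r$; such a cone is also a maximal cone of $\Delta$ and contains at most one of $\rho_p,\rho_q$. Since $D(p,q,r)-1$ carries a factor $\partial_p\partial_q$ and its lowest $z$-term is quadratic, after substituting $z=x_r-x_p-x_q$ its contribution to the monomial $\prod_{\rho_s\in\sigma'}x_s$ vanishes; hence the coefficient of this monomial in $\tilde V$ agrees with its coefficient in $\Vol_\Delta$, which in turn matches the corresponding coefficient in $\Vol_{\hat\Delta}$ via the top-degree isomorphism $\CH^d(\Delta)\cong\CH^d(\hat\Delta)$ induced by $\pi^*$. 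The main obstacle is the operator identity $2D(p,q)+(\partial_p+\partial_q)S = e^{z\partial_p}+e^{z\partial_q}$; once this is established, the linear check, the SR case analysis, and the normalization are all bounded-complexity verifications.
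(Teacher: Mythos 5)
Your computational core is correct, but you take a genuinely different route from the paper, and the route has one step that needs an extra hypothesis. The paper (Lemma~\ref{lem-Vol-subd} and Theorem~\ref{thm-MV-star-subd}) characterizes $\Vol_{\hat{w}}$ by the \emph{single} differential equation $(\partial_p-\partial_r)(\partial_q-\partial_r)\Vol_{\hat{w}}=0$ together with two initial conditions on the expansion in the new variable: the constant term is $\Vol_w$ (compatibility of degree maps under pullback) and the linear term vanishes (from the orthogonal decomposition in Lemma~\ref{lem-star-subdivisions}). Uniqueness is then immediate by solving for the higher coefficients inductively, so the paper never needs to examine the pulled-back linear relations or the remaining Stanley--Reisner relations at all. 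You instead verify that $D(p,q,r)\Vol_\Delta$ lies in the full Macaulay inverse system of $\CH(\hat\Delta)$, generator by generator. Those verifications check out: your identity $\partial_z S=e^{z\partial_p}+e^{z\partial_q}-D(p,q)$ is correct, the key identity $2D(p,q)+(\partial_p+\partial_q)S=e^{z\partial_p}+e^{z\partial_q}$ holds (both sides equal $\bigl((\partial_p-\partial_q)(e^{z\partial_p}+e^{z\partial_q})\bigr)/(\partial_p-\partial_q)$ after clearing the denominator), so $\partial_p\partial_q\bigl(D(p,q,r)\Vol_\Delta\bigr)=0$ identically, and the linear-relation computation and the case analysis of SR monomials involving $\partial_r$ reduce correctly to relations already holding for $\Vol_\Delta$. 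What your route buys is an explicit, relation-by-relation confirmation that the formula is consistent with the presentation of $\CH(\hat\Delta)$; what the paper's route buys is economy and strictly greater generality.

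The gap is in the uniqueness and normalization step. ``Unique up to scalar among degree-$d$ polynomials annihilated by $I_{\hat\Delta}$, so matching one coefficient suffices'' is valid only when $\dim\CH^d(\hat\Delta)=1$, equivalently $\dim\MWd(\hat\Delta)=1$, since that annihilator space is canonically $\bigl(\CH^d(\hat\Delta)\bigr)^*\cong\MWd(\hat\Delta)$ by Theorem~\ref{thm-MW}. This holds for Bergman fans of matroids and for the intermediate fans $\Delta_j$ in the deletion sequence (where the theorem is applied), but it fails for general fans, and the paper proves the statement for an arbitrary fan with an arbitrary Minkowski weight $w$. To reach that generality you would have to match the coefficient of $x_{\hat\sigma}$ for \emph{every} maximal cone $\hat\sigma$ of $\hat\Delta$, including those containing $\rho_r$ --- where the computation is no longer the easy one you give --- or else fall back on the paper's ODE-plus-initial-conditions uniqueness. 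Two smaller points in the same step: you need a maximal cone $\sigma'$ avoiding $\rho_r$ to exist (false if every maximal cone of $\Delta$ contains the subdivided cone) and you need $w_{\sigma'}\neq 0$ there; both are automatic for the constant weight on a Bergman fan but are additional hypotheses in general. Within the matroid setting of Theorem~\ref{thm-MV-star-subdiv}, your proof is complete once you state explicitly that $\dim\CH^d(\hat\Delta)=1$ and justify it (e.g.\ via Lemma~\ref{lem-star-subdivisions} from $\dim\CH^d(\Delta)=1$).
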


We apply the theorem to the first $k$ steps in the factorization (\ref{eq-fact}). Then 
\[ \Vol_{\Delta_{j-1}} = D(i,F_j, F_j\cup i) \Vol_{\Delta_j}.\]

\begin{example} \label{ex1.6}
Let $M$ be the matroid in Example~\ref{ex1}, and let $i=3$. Consider the first map $\Delta_M\to \Delta_1$ in the factorization, where we remove the ray $\rho_{34}$.  Then 
\[ \Vol_M = D(3, 4, 34) \Vol_{\Delta_1} = (1 - \frac{(x_{34}- x_3 -x_4)^2}{2} \partial_3\partial_4) \Vol_{\Delta_1}.\]
\end{example}

Let us now consider the last map in the factorization $\pi: \Delta_k\to \Delta_{M\bs i}$. Assume first that $i$ is not a coloop. Then the Chow rings of the two fans are isomorphic. The mixed volume of $\Delta_k$ is obtained from the mixed volume of $\Delta_{M\bs i}$ by a linear change of variables. 

The map $\pi$ takes takes a ray $\rho_F$ of $\Delta_k$ to the ray $\rho_{F\bs i}$  of $\Delta_{M\bs i}$ if $F\neq i$. Conversely, if $\rho_G$ is a ray of $\Delta_{M\bs i}$, then there is a unique ray $\rho_{\overline{G}}$ of $\Delta_k$ that maps to it. Here $\overline{G}$ is the closure of $G$ in $M$. Thus, $\Vol_{M\bs i}$ is a polynomial in the variables $x_G$ indexed by nontrivial flats $G\in \cLnt(M\bs i)$ and $\Vol_{\Delta_k}$ is a polynomial in the variables $x_i$ and $x_{\overline{G}}$. 

The fan $\Delta_k$ has one new ray $\rho_i$ compared to the fan  $\Delta_{M\bs i}$. One can also find a new linear relation in $\CH(\Delta_k)$ of the form
\[ \partial_i = \sum_{G \in \cLnt(M\bs i)} b_{\overline{G}} \partial_{\overline{G}},\quad b_{\overline{G}} \in\RR.\]
For example, the global linear function $t_i-t_j$ on $\RR^E/ e_E$, where $j\in E \bs i$, gives such a linear relation 
\[  \partial_i = - \sum_{i\in\overline{G}} \partial_{\overline{G}} + \sum_{j\in\overline{G}} \partial_{\overline{G}}.\]

\begin{theorem} \label{MV-non-coloop}
Assume $i$ is not a coloop. Then the mixed volume of $\Delta_k$ is obtained from the mixed volume of $\Delta_{M\bs i}$ by substituting $x_{\overline{G}} + b_{\overline{G}} x_i$ into $x_G$:
\[ \Vol_{\Delta_k} = \Vol_{M\bs i}( x_{\overline{G}} + b_{\overline{G}} x_i).\]
\end{theorem}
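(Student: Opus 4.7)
The plan is to exploit the compact formula
\[
\Vol_H(x) = \frac{1}{d!} \deg_H\!\left( \Big(\sum_F x_F \partial_F\Big)^d \right),
\]
valid for any graded Gorenstein algebra $H = \RR[\partial_F]/I$ with socle in degree $d$. This identity follows by expanding the $d$-th power and matching coefficients: each monomial $x^\alpha$ acquires the coefficient $\deg_H(\partial^\alpha)/\alpha!$, which is exactly the coefficient in $\Vol_H$ coming from its defining property (inverse system / Macaulay duality). Both sides of the desired equality will be rewritten in this form and then compared.

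The heart of the argument is to build a degree-preserving isomorphism $\phi\colon \CH(\Delta_{M\bs i}) \isomto \CH(\Delta_k)$ of graded $\RR$-algebras defined on generators by $\partial_G \mapsto \partial_{\overline G}$. To check that $\phi$ respects the linear relations, I would pull back any linear form $t$ on $\RR^{E\bs i}/e_{E\bs i}$ along the quotient $\RR^E/e_E \to \RR^{E\bs i}/e_{E\bs i}$ that kills $e_i$: using the matroid identity $\overline G \bs i = G$ for flats $G$ of $M\bs i$, the pullback is a linear form $s$ on $\RR^E/e_E$ with $s(e_i)=0$ and $s(e_{\overline G}) = t(e_G)$, so the induced relation in $\CH(\Delta_k)$ is precisely the image under $\phi$ of the original relation. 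For the Stanley-Reisner relations, if $G_1,\ldots,G_m$ is not a chain in $\cLnt(M\bs i)$, then $\overline{G_1},\ldots,\overline{G_m}$ is not a chain in $\cLnt(M)$, and the only cones of $\Delta_k$ that avoid $\rho_i$ are cones of $\Delta_M$, so $\partial_{\overline{G_1}} \cdots \partial_{\overline{G_m}}$ remains zero in $\CH(\Delta_k)$.

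Surjectivity of $\phi$ is immediate from the given linear relation $\partial_i = \sum_G b_{\overline G} \partial_{\overline G}$, which lets one eliminate $\partial_i$ from every expression. Injectivity then comes from Poincar\'e duality on both sides, once one notes that any maximal chain in $\cLnt(M\bs i)$ maps under $\phi$ to a top monomial in $\CH(\Delta_k)$ whose degree is still $1$; this identifies the two degree maps.

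With $\phi$ established, the conclusion is a direct computation. Applying the formula of the first step to $M\bs i$ and substituting $x_G = x_{\overline G} + b_{\overline G} x_i$ gives
\[
\Vol_{M\bs i}(x_{\overline G} + b_{\overline G} x_i) = \frac{1}{d!}\deg_{M\bs i}\!\left( \Big( \sum_G x_{\overline G}\partial_G + x_i \sum_G b_{\overline G}\partial_G \Big)^d \right).
\]
Pushing the interior through $\phi$ and invoking the relation $\partial_i = \sum_G b_{\overline G}\partial_{\overline G}$ inside $\CH(\Delta_k)$ rewrites it as $\sum_G x_{\overline G}\partial_{\overline G} + x_i \partial_i$, at which point the compact formula applied to $\Delta_k$ identifies the whole quantity with $\Vol_{\Delta_k}(x_i, x_{\overline G})$. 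The main obstacle is Step 2, specifically verifying that no Stanley-Reisner relation is lost when passing from $\Delta_M$ to $\Delta_k$ through the inverse star subdivisions, which crucially requires that $i$ is not a coloop.
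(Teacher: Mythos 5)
Your argument is correct, but it reaches the conclusion by a different route than the paper. The paper characterizes $\Vol_{\Delta_k}$ as the unique solution of the differential equation $\bigl(\partial_i - \sum_G b_{\overline{G}}\partial_{\overline{G}}\bigr)V = 0$ with initial condition $V|_{x_i=0} = \Vol_{M\bs i}(x_{\overline{G}})$ (uniqueness by solving for the coefficients of $x_i, x_i^2, \ldots$ inductively), and then verifies by the chain rule that the substituted polynomial solves this problem. You instead use the Macaulay inverse-system identity $\Vol_H = \frac{1}{d!}\deg_H\bigl(\bigl(\sum_F x_F\partial_F\bigr)^d\bigr)$ and compute directly: substitute $x_G = x_{\overline{G}} + b_{\overline{G}}x_i$, push through the ring map $\partial_G\mapsto\partial_{\overline{G}}$, and absorb $x_i\sum_G b_{\overline{G}}\partial_{\overline{G}}$ into $x_i\partial_i$ via the linear relation. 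Both proofs rest on the same two structural inputs, which the paper establishes in the lemma preceding its proof: that $\partial_G\mapsto\partial_{\overline{G}}$ is a ring homomorphism $\CH(M\bs i)\to\CH(\Delta_k)$ compatible with the degree maps, and the linear relation expressing $\partial_i$. Your route is more direct once those are in hand and makes the change of variables conceptually transparent; the paper's ODE formulation has the advantage of running in parallel with the coloop case and the star-subdivision step, where the answer is genuinely obtained by solving a differential equation rather than by substitution. Two small points: your verification of the Stanley--Reisner relations leans on the claim that every cone of $\Delta_k$ avoiding $\rho_i$ is a cone of $\Delta_M$, which is true but needs the observation that a closure $\overline{G}$ can only be the \emph{smaller} member of an equivalence class $\{F_l, F_l\cup i\}$ (if $\overline{G}=F_l\cup i$ then $F_l$ would be a smaller flat containing $G$); and the surjectivity and injectivity of $\phi$, while true, are not actually needed for your final computation --- only the homomorphism property and the degree compatibility are used.
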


Now consider the case where $i$ is a coloop. Then $\Delta_k$ has two extra rays $\rho_i$ and $\rho_{E\bs i}$ compared to $\Delta_{M\bs i}$. These rays correspond to the two new variables $x_i$ and $x_{E \bs i}$ in the mixed volume. The relation now has the form 
\[ \partial_i - \partial_{E\bs i} = \sum_{G \in \cLnt(M\bs i)} b_{\overline{G}} \partial_{\overline{G}},\quad b_{\overline{G}} \in\RR,\]
where we may find $b_{\overline{G}}$ the same way as above. 

If  $g(x)$ is a polynomial, write $\int g(x) dx$ for the antiderivative of $g(x)$ with constant term $0$. 

\begin{theorem} \label{MV-coloop}
Assume that $i$ is a coloop. Then the mixed volume of $\Delta_k$ is
\[ \Vol_{\Delta_k} = \int \Vol_{M\bs i}(x_{\overline{G}} + b_{\overline{G}} x_i) dx_i + \int \Vol_{M\bs i} (x_{\overline{G}} - b_{\overline{G}} x_{E\bs i}) dx_{E\bs i}.\]
\end{theorem}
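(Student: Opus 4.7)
My strategy is to verify that the proposed polynomial $V$ is the unique homogeneous degree $d$ polynomial annihilated by the ideal defining $\CH(\Delta_k)$ and correctly normalized on a top-degree monomial. Write $V = V_1 + V_2$ with $V_1 = \int \Vol_{M\bs i}(x_G+b_G x_i)\,dx_i$ and $V_2 = \int \Vol_{M\bs i}(x_G-b_G x_{E\bs i})\,dx_{E\bs i}$.

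First I would describe $\CH(\Delta_k)$. Since $i$ is a coloop, the lattice of flats of $M$ decomposes as $\cL(M)\cong\cL(M\bs i)\times\{0,1\}$, pairing each $F\in\cL(M\bs i)$ with $F$ and $F\cup i$. After reversing the $k$ star subdivisions, the surviving rays are $\rho_G$ for $G\in\cLnt(M\bs i)$ together with $\rho_i$ and $\rho_{E\bs i}$, and the resulting fan is the product $\Delta_k\cong\Delta_{M\bs i}\times L$, with $L$ the complete $1$-dimensional fan on the opposite rays $\rho_i,\rho_{E\bs i}$. Consequently, the Stanley--Reisner ideal of $\Delta_k$ is generated by that of $\Delta_{M\bs i}$ together with the single monomial $\partial_i\partial_{E\bs i}$, and the linear relations in $\CH(\Delta_k)$ consist of the $n-1$ relations inherited from $\Delta_{M\bs i}$ (involving only the $\partial_G$'s) plus the single extra relation $\partial_i-\partial_{E\bs i}=\sum_G b_G\partial_G$.

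Next I check annihilation of $V$ by each generator of the ideal. The separation of variables ($V_1$ has no $x_{E\bs i}$ and $V_2$ has no $x_i$) immediately gives $\partial_i\partial_{E\bs i} V=0$. The Stanley--Reisner relations $\partial_{G_1}\partial_{G_2}$ for non-chain pairs and the inherited linear relations commute past the substitution and the integral, so they kill $V$ because they kill $\Vol_{M\bs i}$. The crux is the extra relation: by the chain rule,
\[ \sum_G b_G(\partial_G\Vol_{M\bs i})(x_G+b_G x_i) = \tfrac{d}{dx_i}\Vol_{M\bs i}(x_G+b_G x_i), \]
so under the convention that $\int\cdot\,dx$ vanishes at $x=0$, the integral $\sum_G b_G\partial_G V_1$ telescopes to $\Vol_{M\bs i}(x_G+b_G x_i)-\Vol_{M\bs i}(x_G)$. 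A symmetric computation for $V_2$ produces the opposite constant $\Vol_{M\bs i}(x_G)$, the two constants cancel, and the remainder is exactly $(\partial_i-\partial_{E\bs i})V$.

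Finally I fix the scalar by testing on one top-degree monomial. For a maximal chain $G_1<\cdots<G_{d-1}$ in $\cLnt(M\bs i)$, the class $\partial_i\partial_{G_1}\cdots\partial_{G_{d-1}}\in\CH^d(\Delta_k)$ has degree $1$ by the product structure of $\Delta_k$. Only $V_1$ contributes, and $\partial_{G_1}\cdots\partial_{G_{d-1}}\Vol_{M\bs i}=1$ leaves $x_i$ inside the integral, so the final $\partial_i$ yields $1$, matching the degree. The main obstacle will be the telescoping step for the extra linear relation; this is what makes the formula independent of the choice of $b_G$ (different choices differ by relations from $\CH(\Delta_{M\bs i})$, which leave $\Vol_{M\bs i}(x_G+b_Gx_i)$ invariant) and it requires careful bookkeeping with the antiderivative convention to ensure the boundary terms cancel.
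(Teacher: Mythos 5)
Your proposal is correct, and its computational heart --- the chain-rule/telescoping argument showing that $\partial_i-\partial_{E\bs i}-\sum_G b_G\partial_G$ and $\partial_i\partial_{E\bs i}$ annihilate the candidate, with the two boundary constants $\pm\Vol_{M\bs i}(x_G)$ cancelling --- is exactly the computation the paper performs. Where you genuinely diverge is in how the candidate is then identified with $\Vol_{\Delta_k}$. The paper uses \emph{only} those two relations: it notes that the true mixed volume satisfies them together with the initial condition that its part of degree $\leq 1$ in $x_i,x_{E\bs i}$ equals $(x_i+x_{E\bs i})\Vol_{M\bs i}$, shows this data determines the polynomial uniquely (solving inductively for the coefficients of $x_i^a$ and $x_{E\bs i}^b$), and checks the candidate satisfies the same data. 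You instead characterize $\Vol_{\Delta_k}$ as the unique-up-to-scalar degree-$d$ polynomial annihilated by the entire defining ideal of $\CH(\Delta_k)$, which obliges you to (a) present that ideal explicitly --- via the product structure $\Delta_k\cong\Delta_{M\bs i}\times L$, which the paper never states but which is correct for a coloop since $\cL(M)\cong\cL(M\bs i)\times\{0,1\}$; (b) verify the Stanley--Reisner relations and the $n-1$ inherited linear relations as well (easy, as you note, since constant-coefficient operators in the $\partial_G$ commute with the substitution and the antiderivative); and (c) fix one scalar by evaluating on a single maximal cone. Your route requires the extra inputs that $\dim\CH^d(\Delta_k)=1$ (available from the star-subdivision lemma) and the product presentation of the fan, but in exchange it replaces the paper's polynomial-valued initial condition with a one-number normalization and makes transparent why the answer is independent of the choice of the $b_G$. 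Both arguments are complete; yours is somewhat heavier on fan combinatorics and lighter on uniqueness bookkeeping.
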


\begin{example} 
Consider the matroid in Example~\ref{ex1}. Deleting $i=3$ gives a sequence of maps $\Delta_M \to \Delta_1 \to \Delta_{M\bs i}$, where the first map is the star subdivision with new ray $\rho_{34}$, and the second map is projection from $\Span e_3$. Consider the last step $\Delta_1 \to \Delta_{M\bs i}$. For a nontrivial flat $G$ of $M\bs i$, its closure in $M$ is 
\[ \overline{G} = \begin{cases} 123 & \text{ if $G=12$}\\ G & \text{ otherwise}. \end{cases}\]
Using the linear function $t_3-t_2$ gives the relation
\[ \partial_3 = \partial_2+\partial_{24}.\]
Then $\Vol_{\Delta_1}$ is obtained from $\Vol_{M\bs i}$ using the substitution
\begin{align*}
x_{2} & = x_{2} + x_3, \quad x_{24} = x_{24} + x_3,\\
x_{12} & = x_{123}, \quad x_G = x_G \text{ if  $G \neq 2, 24, 12$}.
\end{align*} 
The matroid $M\bs i$ is Boolean and its Bergman fan is a complete fan in $\RR^2$. The mixed volume of this fan can be found by deleting another element, or it can be computed directly as in \cite{KX}. The result is:
\begin{align*} \Vol_{M\bs i} =& -\frac{x_1^2 + x_2^2 + x_4^2 + x_{12}^2+x_{14}^2+x_{24}^2}{2} \\
  &+ x_1(x_{12}+x_{14}) + x_2( x_{12}+x_{24})+ x_4(x_{14}+x_{24}).
  \end{align*}
Performing the substitution as above gives
\begin{align*} \Vol_{\Delta_1} &= -\frac{x_1^2 + (x_2 + x_3)^2 + x_4^2 + x_{123}^2+x_{14}^2+(x_{24}+x_3)^2}{2} \\
  & + x_1(x_{123}+x_{14}) + (x_2+x_3)( x_{123}+x_{24}+x_3)+ x_4(x_{14}+x_{24}+x_3)\\
  &=  \Vol_{M\bs i} +x_3(x_{123}+ x_4).
  \end{align*}
  From Example~\ref{ex1.6} we get
  \[ \Vol_M = (1 - \frac{(x_{34}- x_3 -x_4)^2}{2} \partial_3\partial_4) \Vol_{\Delta_1} = \Vol_{M\bs i} + x_3(x_{123}+x_4) - \frac{(x_{34}- x_3 -x_4)^2}{2} .\]
\end{example}

\begin{example}
    Consider again the matroid in Example~\ref{ex1}. Deleting $i=4$ gives a sequence of maps $\Delta_M \to \Delta_1 \to \Delta_2\to \Delta_3 \to \Delta_{M\bs i}$, where the first three maps are star subdivisions with new rays $\rho_{14}, \rho_{24}, \rho_{34}$, and the last map is projection from $\Span e_4$.

The matroid $M\bs 4$ has mixed volume $x_1+x_2+x_3$. Let us compute the mixed volume of 
$\Delta_3$ from this. The linear function $t_4-t_1$ gives the relation 
\[ \partial_4 - \partial_{123} = \partial_1.\]
Then
\begin{align*} 
\Vol_{\Delta_3} &= \int (x_1 + x_4) + x_2 + x_3 dx_4 + \int (x_1-x_{123}) + x_2 + x_3 dx_{123} \\
 & = x_4(x_1+x_2+x_3 +\frac{x_4}{2}) + x_{123}(x_1+x_2+x_3 - \frac{x_{123}}{2}).
 \end{align*}
Now the mixed volume of $M$ can be found from this by applying the three differential operators
\[ D(i,4,i4) = 1- \frac{(x_{i4}- x_i -x_4)^2}{2}\partial_i\partial_4, \quad i=1,2,3.\]
The result is
\[ \Vol_M = \Vol_{\Delta_3} - \sum_{i=1}^3 \frac{(x_{i4}- x_i -x_4)^2}{2}.\]
As a check, we can find the coefficient of $x_4^2$ to be $\frac{1}{2} - \frac{3}{2} = -1$. 
\end{example}

\subsection{Semismall maps}
In \cite{BHM} it was shown that deleting a non-coloop $i$ from a matroid $M$ gives rise to a semismall morphism of algebraic varieties. This was then used to prove that Hard Lefschetz theorem and Hodge-Riemann bilinear relations for $M\bs i$ imply the same for $M$. 

In the previous section we explained how to break this semismall map into more elementary steps -- the star subdivisions followed by a projection. A star subdivision at a $2$-dimensional cone is a semismall map of fans, which is defined to be a map that takes a cone of dimension $j$ to a cone of dimension at most $2j$.

If $\hat\Delta\to \Delta$ is a star subdivision at a $2$-dimensional cone $\tau\in\Delta$, then one can deduce Hard Lefschetz theorem and  Hodge-Riemann bilinear relations for $\hat{\Delta}$ if we know them for $\Delta$ and for $\Link_\Delta \tau$. 
%Here, if $\Delta$ lies in a vector space $V$, then the fan $\Link_\Delta \tau$ lies in $V/\Span \tau$. 
The Lefschetz element for all fans is $l\in \CH^1(\Delta)$.
%which can be chosen either general or convex.

In the sequence of star subdivisions $\Delta_{j-1} \to \Delta_j$ in the previous subsection, the fan $\Link_{\Delta_j} \tau$ is a product of Bergman fans and its embedding is also a product embedding. Hard Lefschetz theorem and  Hodge-Riemann bilinear relations for this product follow by induction on dimension. The last step $\Delta_k\to\Delta_{M\bs i}$ induces an isomorphism of Chow rings, hence the two theorems for $\Delta_k$ hold if they hold for $\Delta_{M\bs i}$. One can delete non-coloops $i$ until the matroid becomes Boolean, in which case the theorems follow from the case of simplicial  projective fans. 

\subsection{Semismallness from mixed volume.}
How much of the semismall decomposition can be detected from the mixed volume? The case of one star subdivision can be generalized as follows.

Let $V_0(x_1,\ldots, x_n)$ be a homogeneous degree $d$ polynomial that defines an algebra $H_0$. Consider the algebra $H$ defined by a homogeneous polynomial
\[ V = V_0 + \frac{x_{n+1}}{1!} V_1 + \frac{x_{n+1}^2}{2!} V_2 + \frac{x_{n+1}^3}{3!} V_3+ \ldots,\]
where $V_i\in \RR[x_1,\ldots,x_n]$. Assume that
\begin{enumerate}
    \item $V_1=0$.
    \item $V$ satisfies a differential equation $D(V)=0$, where
    \[ D = \partial_{n+1}^2 + a_1 \partial_{n+1} + a_2\]
    for some $a_i\in \RR[\partial_1,\ldots,\partial_n]$ of degree $i$.
\end{enumerate}
Then $H_0$ is a subalgebra of $H$ and there is an orthogonal decomposition with respect to the \Po pairing:
\[ H = H_0 \stackrel{\perp}\oplus \partial_{n+1} H_0.\]
Moreover, the second summand is
\[ \partial_{n+1} H_0 \isom H_0/\Ann(a_2),\]
where $\Ann a_2 \subseteq H_0$ is the annnihilator of $a_2\in H^2_0$. The \Po pairing on the second summand is given by 
\[ \langle \partial_{n+1} h, \partial_{n+1} g\rangle = V_0(a_2 h g) = V_2(hg).\]
%Indeed, $D$ viewed as an element of $H$ is zero. Hence we don't need powers of $\partial_{n+1}$ higher than $1$. Orthogonality follows from the vanishing of $V_1$. From the differential equation, one gets that $V_2$ is obtained from $V_0$ by applying the derivative $a_2$. The term  $\frac{x_{n+1}^2}{2!} V_2$ defines the \Po pairing on the second summand.

In the case of the star subdivision at a cone with rays $\rho_1, \rho_2$, the derivative is $D=(\partial_1-\partial_{n+1})(\partial_2-\partial_{n+1})$, which vanishes in the Chow ring of the subdivision because the rays $\rho_1, \rho_2$ do not lie in one cone. As explained above, the mixed volume $V$ is obtained from the mixed volume $V_0$ by applying the derivative $D(1,2)$. This derivative has no term of degree $1$ in $z$, hence $V_1=0$.

In general, one cannot deduce  Hard Lefschetz theorem and  Hodge-Riemann bilinear relations for the summand $\partial_{n+1} H_0 = H_0/\Ann a_2$ unless one has more information about this ring or the term $V_2$ in the mixed volume. In the case of star subdivision, this summand is the Chow ring of the link of the cone that is subdivided.

The case of adding one variable $x_{n+1}$ can be generalized to adding variables $x_{n+1}, \ldots, x_{n+m}$. We now consider the mixed volume $V\in \RR[x_1,\ldots, x_{n+m}]$ that satisfies differential equations of the type $D$ above, one for each new variable, and equations $\partial_i \partial_j = 0$ for $n+1\leq i < j\leq n+m$. If this $V$ has no linear terms in $x_i$, $i=n+1,\ldots, n+m$, then it is determined by $V_0$, and there is a direct sum decomposition of $H$ as above with $m+1$ orthogonal summands.

\subsection{Outline of the article.}
Our approach is mostly fan-centric. We prove as much as possible for general fans, and only at the very end specialize to the case of Bergman fans. In the next section we will start with background material about Stanley-Reisner rings of fans. These results are all well-known. We then explain Minkowski weights and mixed volumes for arbitrary fans. In the last section we consider Bergman fans of matroids and prove the theorems listed above.

\section{Fans and Stanley-Reisner rings}

We define all rings over the field $\RR$, although everything also works over $\QQ$.

Let $\Delta$ be a pure simplicial fan of dimension $d$. We fix generators $v_i$, $i=1,\ldots, N$ for the $1$-dimensional cones $\rho_i$ of the fan (the rays of the fan). Denote by $\Delta_j$ the set of $j$-dimensional cones in $\Delta$.

The open star of a cone $\tau\in\Delta$ is
\[ \Staro{\Delta} \tau = \{\sigma\in\Delta| \tau\leq \sigma\}.\]
The closed star $\Star_\Delta\tau$ is the subfan of $\Delta$ generated by the open star.  The link of $\tau$ is the subfan
\[ \Link_\Delta \tau = \{\sigma\in\Star_\Delta \tau| \sigma\cap\tau = 0\}. \]
When $\Delta$ is a fan in a vector space $V$, then we consider $\Star_\Delta \tau$ also in $V$. The subfan $\Link_\Delta \tau$, however, is mapped to the vector space $V/\Span\tau$.

The product of two fans $\Delta_1\times \Delta_2$ has cones $\sigma_1\times\sigma_2$, where $\sigma_1\in\Delta_1$ and $\sigma_2\in\Delta_2$. If $\Delta_1$ is embedded in $V_1$ and $\Delta_2$ is embedded in $V_2$, then $\Delta_1\times\Delta_2$ is embedded in $V_1\times V_2$. We call this embedding the product embedding.

By an abstract fan we mean a fan without an embedding in a vector space. It is a set of cones glued along their faces. This is equivalent to an abstract simplicial complex. A typical situation is that $\Delta$ is a product  $\Delta=\Delta_1\times\Delta_2$ as an abstract fan, but the embedding of $\Delta$ may not be the product embedding.

Let $\cA(\Delta)$ be the ring of $\RR$-valued piecewise polynomial functions on $\Delta$. This ring is graded so that piecewise linear functions have degree $1$. Write $\partial_i$ for the piecewise linear function that takes value $1$ at $v_i$ and $0$ at all other $v_j$. Then we have
\[ \cA(\Delta) = \RR[\partial_1,\ldots,\partial_N]/I,\]
where I is the ideal generated by square-free monomials $\prod_{i\in S} \partial_i$ such that the set $\{ v_i\}_{i\in S}$ does not lie in one cone. The ring $\cA(\Delta)$ is the Stanley-Reisner ring of $\Delta$. It is defined for the abstract fan $\Delta$ and it does not depend on the embedding of $\Delta$ is a vector space.

Let $\theta_1, \theta_2, \ldots, \theta_n \in \cA^1(\Delta)$ be piecewise linear functions. They define a piecewise linear map $\theta: \Delta \to \RR^n$.  If $\theta$ is injective on every cone, we  say that $\theta_1,\ldots, \theta_n$ is a system of linear parameters on $\Delta$. Let $A = \RR[t_1,\ldots, t_n]$ be the ring of global polynomial functions on $\RR^n$. Then $\cA(\Delta)$ is an $A$-module, where $t_i$ acts by multiplication with $\theta_i$.  

Let $\bfm \subseteq A$ be the maximal ideal generated by $t_1,\ldots, t_n$. Define the Chow ring
\[ \CH(\Delta) = \cA(\Delta)/\bfm \cA(\Delta) = \cA(\Delta)/ (\theta_1,\ldots,\theta_n).\]
The Chow ring depends on the linear parameters, but we will omit these from the notation. Given a fixed system of linear parameters $\theta$, we say that $\Delta$ is a fan in $\RR^n$. Then the Chow ring is associated to a fan in $\RR^n$.

For a cone $\sigma$ of dimension $j$, define the monomial $\partial_\sigma = \prod \partial_i \in \cA^j(\Delta)$, where the product runs over the $j$ generators $v_i$ of the cone $\sigma$. Thus, $\partial_0=1$ and $\partial_{\rho_i} = \partial_i$ for a ray $\rho_i$. The following fact is well-known.

\begin{lemma} 
The Chow ring $\CH(\Delta)$ as an $\RR$-vector space is spanned by the square-free monomials $\partial_\sigma$ for  $\sigma\in\Delta$.
\end{lemma}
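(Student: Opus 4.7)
The plan is to start from the observation---immediate from the defining ideal $I$---that $\cA(\Delta)$ is already spanned as an $\RR$-vector space by the monomials $m=\prod_i \partial_i^{k_i}$ whose support $\operatorname{supp}(m)=\{v_i : k_i>0\}$ lies in some cone of $\Delta$, since every other monomial is killed by $I$. I then reduce each such $m$ modulo the ideal $(\theta_1,\ldots,\theta_n)$ to a linear combination of the square-free $\partial_\sigma$.

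The key identity I will exploit is that, because $\Delta$ is simplicial, every piecewise linear function $f\in\cA^1(\Delta)$ is the unique linear extension of its values on the rays, so $f=\sum_i f(v_i)\partial_i$ in $\cA^1(\Delta)$. In particular, for each linear parameter, $\theta_k=\sum_i \theta_k(v_i)\partial_i$, which converts the quotient by $(\theta_1,\ldots,\theta_n)$ into concrete degree-one relations among the $\partial_i$.

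I will induct on the excess $e(m)=\deg(m)-|\operatorname{supp}(m)|\geq 0$. The base case $e(m)=0$ says $m$ is square-free and hence equals $\partial_\sigma$ for the cone $\sigma$ generated by its support. Otherwise there is a ray $v_j$ with $k_j\geq 2$; let $\sigma$ be the smallest cone of $\Delta$ containing $\operatorname{supp}(m)$. Because $\theta$ is injective on $\sigma$ and $\sigma$ is simplicial, the vectors $\theta(v_i)$ for rays $v_i$ of $\sigma$ are linearly independent in $\RR^n$, so I can solve for $a_1,\ldots,a_n\in\RR$ making $\ell=\sum_k a_k\theta_k$ take the value $1$ at $v_j$ and $0$ at every other ray of $\sigma$. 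Applying the key identity yields $\ell=\partial_j+\sum_{v_i\notin\sigma}\ell(v_i)\partial_i$ in $\cA^1(\Delta)$, and reducing modulo $\bfm\cA(\Delta)$ gives $\partial_j\equiv -\sum_{v_i\notin\sigma}\ell(v_i)\partial_i$.

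Multiplying this congruence by $m/\partial_j$, every resulting term is either zero (if its support fails to lie in a cone) or a monomial of the same total degree whose support is $\operatorname{supp}(m)\cup\{v_i\}$ for some $v_i\notin\sigma$; its excess is therefore $e(m)-1$, and the inductive hypothesis finishes the step. The only nonroutine point is the "values on rays" formula $f=\sum_i f(v_i)\partial_i$, which is where simpliciality of $\Delta$ is genuinely used; everything else is bookkeeping around the inductive invariant $e(m)$, which is non-negative and strictly decreasing, so the reduction must terminate.
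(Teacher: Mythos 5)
Your proof is correct and follows essentially the same route as the paper: both arguments reduce a non-square-free monomial by substituting for a repeated variable $\partial_j$ a linear combination of $\partial_i$ with $v_i$ outside the supporting cone, using a linear parameter that is $1$ at $v_j$ and $0$ at the other rays of that cone, and both induct on the same quantity (your excess $e(m)$ is exactly the paper's invariant $\sum_i (a_i-1)$). Your write-up is slightly more explicit about why the required linear form exists (linear independence of the $\theta(v_i)$ on a simplicial cone) and about the identity $f=\sum_i f(v_i)\partial_i$, but the argument is the same.
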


 \begin{proof}
 We need to show that every monomial in the variables $\partial_i$ in $\cA(\Delta)$ is a linear combination of square-free monomials modulo the ideal $\bfm$.  After re-labeling, consider a monomial $\partial_1^{a_1} \cdots \partial_j^{a_j}$, where all $a_j\geq 1$ and some $a_j$, say $a_1$, is greater than $1$. If $v_1, \ldots, v_j$ do not generate a cone in $\Delta$, then the monomial is zero in $\cA(\Delta)$. Otherwise, there exists a linear combination of $\theta_1,\ldots, \theta_N$ of the form 
 \[  \partial_1 - b_{j+1} \partial_{j+1} - \ldots - b_{N} \partial_{N}.\]
  Then modulo $\bfm$ we can write the monomial as
  \[ (b_{j+1} \partial_{j+1} + \ldots + b_{N} \partial_{N}) \partial_1^{a_1-1}  \partial_2^{a_2} \cdots \partial_j^{a_j}.\]
  If we measure how far a monomial is from being square free using the invariant $\sum_i (a_i-1)$, then every monomial appearing  in the expression has smaller invariant than the original monomial. Now the claim follows by induction.
\end{proof}

The lemma implies that $\CH(\Delta)$ is zero in degrees higher than $d$.

Let us now consider linear relations between the square-free monomials $\partial_\sigma$ in $\CH(\Delta)$. Let $\tau$ be a cone of dimension $j-1$ in $\Delta$, and let $\sigma_i$, $i=1,\ldots, s$, be the $j$-dimensional cones containing $\tau$. We may assume that $v_i\in \sigma_i\bs \tau$. Now consider a linear combination $l$ of $\theta_1, \ldots, \theta_n$ that vanishes on $\tau$,
\[ l = b_1 \partial_1 + \ldots + b_s \partial_s + \ldots.\]
Then the product $l \partial_\tau \in \bfm \cA(\Delta)$ has the form
\begin{equation} \label{eq-extra-rel}
 l \partial_\tau = b_1 \partial_{\sigma_1} + \cdots + b_s \partial_{\sigma_s}. \tag{Rel}
 \end{equation}
 We will see later that such elements, as $\tau$ and $l$ vary, span all relations between $\partial_{\sigma_i}$ in $\CH(\Delta)$. For now we have:
 
 \begin{lemma} \label{lem-extra-rel}
 Consider a system of linear parameters $\theta_1,\ldots, \theta_n$ on $\Delta$ such that the first $d$ of them $\theta_1,\ldots, \theta_d$ also form a system of linear parameters.  Then elements of the form (\ref{eq-extra-rel}) together with $(\theta_1,\ldots,\theta_d)\cA(\Delta)$ span all of $(\theta_1,\ldots,\theta_n)\cA(\Delta)$.
 \end{lemma}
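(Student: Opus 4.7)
The plan is to prove that every generator $\theta_i g$ of $(\theta_1,\ldots,\theta_n)\cA(\Delta)$ with $d<i\le n$ lies in $(\theta_1,\ldots,\theta_d)\cA(\Delta)$ plus the $\RR$-span of elements of the form (\ref{eq-extra-rel}). First I would apply the previous lemma to the subsystem $\theta_1,\ldots,\theta_d$, which is itself a system of linear parameters by hypothesis; this writes an arbitrary $g\in\cA(\Delta)$ as an $\RR$-linear combination of the square-free monomials $\partial_\tau$ modulo $(\theta_1,\ldots,\theta_d)\cA(\Delta)$. Since the latter ideal is stable under multiplication by $\theta_i$, it is enough to treat each single product $\theta_i\partial_\tau$ with $\tau\in\Delta$.

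Fix a cone $\tau$. Because $\theta_1,\ldots,\theta_d$ is a system of linear parameters, the restriction $(\theta_1,\ldots,\theta_d)|_\tau$ is injective into $\RR^d$, and since $\dim\tau\le d$ the restrictions $\theta_1|_\tau,\ldots,\theta_d|_\tau$ span the dual space of $\Span\tau$. I would pick scalars $c_1,\ldots,c_d\in\RR$ (depending on $\tau$) with $\theta_i|_\tau=\sum_j c_j\theta_j|_\tau$ and set
\[ l \;:=\; \theta_i \;-\; \sum_{j=1}^d c_j\theta_j, \]
a linear combination of $\theta_1,\ldots,\theta_n$ vanishing on $\tau$. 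Writing $l=\sum_k b_k\partial_k$ in the ray basis of $\cA^1(\Delta)$ and using that $\{\partial_k|_\tau\mid v_k\in\tau\}$ is the dual basis to the generators of $\tau$, uniqueness of the expansion forces $b_k=0$ for every $v_k\in\tau$. Therefore
\[ \theta_i\partial_\tau \;=\; \sum_{j=1}^d c_j\theta_j\partial_\tau \;+\; \sum_{v_k\notin\tau} b_k\,\partial_k\partial_\tau, \]
where the first sum lies in $(\theta_1,\ldots,\theta_d)\cA(\Delta)$ and the second, after discarding those $\partial_k\partial_\tau$ for which $\tau\cup\{v_k\}$ is not a cone, is exactly an expression of the shape (\ref{eq-extra-rel}) for the cone $\tau$ (and is simply $0$ when $\tau$ is maximal of dimension $d$, consistent with there being no larger cones).

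The one delicate point I would expect is the automatic vanishing of the coefficients $b_k$ with $v_k\in\tau$: this is not an extra assumption one gets to make but a consequence of the uniqueness of the expansion in $\cA^1(\Delta)$ together with the linear independence of $\{\partial_k|_\tau\}_{v_k\in\tau}$ on $\Span\tau$. It is precisely this step that converts the identity $l|_\tau=0$ into the fact that $l\partial_\tau$ is a genuine (\ref{eq-extra-rel})-relation rather than an arbitrary sum of ray monomials times $\partial_\tau$; everything else in the argument is bookkeeping.
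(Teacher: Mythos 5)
Your argument is correct and follows essentially the same route as the paper: reduce to the products $\theta_i\partial_\tau$ via the spanning lemma, then subtract a suitable combination of $\theta_1,\ldots,\theta_d$ to obtain an $l$ vanishing on $\tau$, so that $l\partial_\tau$ is an element of the form (\ref{eq-extra-rel}). The extra details you supply (why such an $l$ exists, and why the coefficients $b_k$ with $v_k\in\tau$ vanish --- most quickly seen from $b_k=l(v_k)=0$) are points the paper leaves implicit, but they do not change the argument.
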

 
 \begin{proof}
 Since the monomials $\partial_\tau$ over all $\tau\in\Delta$ span $\cA(\Delta)/(\theta_1,\ldots,\theta_d)$, it suffices to show that for any $\tau$ and any $\theta_i$, $i=d+1,\ldots, n$, the product $\theta_i \partial_\tau$ lies in the span of elements of the form (\ref{eq-extra-rel}) and $(\theta_1,\ldots,\theta_d)\cA(\Delta)$. Indeed, let $l$ be the sum of $\theta_i$ and a linear combination of $\theta_1,\ldots,\theta_d$, such that $l$ vanishes on $\tau$. Then $l\partial_\tau$ is an element of the form (\ref{eq-extra-rel}), and it is the sum of $\theta_i \partial_\tau$ and an element in $(\theta_1,\ldots,\theta_d)\cA(\Delta)$.
 \end{proof}

\subsection{Star subdivisions} \label{sec-star-subdivisions}
Let $\hat{\Delta}$ be the star subdivision of the fan $\Delta$ at a cone $\tau$. We extend the linear parameters $\theta_1,\ldots,\theta_n$ from $\Delta$ to $\hat{\Delta}$ so that the new ray is generated by $v_0 = v_1+\ldots+ v_j$ in $\RR^n$, where $v_1,\ldots, v_j$ are the generators of $\tau$. Pullback of piecewise polynomial functions defines an injective homomorphism of $A$-algebras
\[ \cA(\Delta) \hookrightarrow \cA(\hat\Delta)\]
and a morphism of Chow rings $\CH(\Delta) \to \CH(\hat\Delta)$. Let us write $\hat{\partial}_i$, $i=0,1 ,\ldots,N$, for the generators of $\cA(\hat\Delta)$.

\begin{lemma} \label{lem-star-subdivisions}
There is a direct sum decomposition of $A$-modules
\[ \cA(\hat\Delta) = \cA(\Delta) \oplus \big[ \hat\partial_{0} \cA(\Delta) \oplus \hat\partial_{0}^2 \cA(\Delta) \oplus \cdots \oplus \hat\partial_{0}^{j-1} \cA(\Delta)\big].\]
Modulo the ideal $\bfm$, this gives a direct sum of vector spaces 
\[ \CH(\hat\Delta) = \CH(\Delta) \stackrel{\perp}{\oplus} \big[ \hat\partial_{0} \CH(\Delta) \oplus \hat\partial_{0}^2 \CH(\Delta) \oplus \cdots \oplus \hat\partial_{0}^{j-1} \CH(\Delta)\big],\]
where the two summands are orthogonal in the following sense: the product of any two elements $f$ and $g$ from the two summands, such that $\deg{f}=p$ and $\deg{g}=d-p$ for some $p$, is zero in $\CH^d(\hat{\Delta})$.
\end{lemma}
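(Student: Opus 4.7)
My plan is to split the proof into three parts: surjectivity of the $A$-module map
\[
\Phi\colon\bigoplus_{a=0}^{j-1}\hat\partial_0^a\cA(\Delta)\to\cA(\hat\Delta),
\]
injectivity (the direct sum), and orthogonality after reducing modulo $\bfm$.

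First I identify the pullback. Because $v_0=v_1+\cdots+v_j$ lies in the simplicial cone $\tau$, the piecewise linear function $\partial_i$ takes value $\delta_{i\in\tau}$ at $v_0$, so under the pullback $\partial_i=\hat\partial_i+\delta_{i\in\tau}\hat\partial_0$, and $\cA(\hat\Delta)$ is generated as a ring over the image of $\cA(\Delta)$ by the single new element $\hat\partial_0$. Since $\tau$ is no longer a cone of $\hat\Delta$, the Stanley--Reisner relation $\hat\partial_1\hat\partial_2\cdots\hat\partial_j=0$ holds; substituting $\hat\partial_i=\partial_i-\hat\partial_0$ and expanding in the elementary symmetric polynomials of $\partial_1,\ldots,\partial_j$ yields a monic identity
\[
\hat\partial_0^j=\sum_{s=0}^{j-1}(-1)^{j+s+1}e_{j-s}(\partial_1,\ldots,\partial_j)\,\hat\partial_0^s,
\]
with coefficients in $\cA(\Delta)$. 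Iterating this identity, every $\hat\partial_0^a$ with $a\ge j$ reduces to a combination of lower powers with $\cA(\Delta)$-coefficients, proving that $\Phi$ is surjective.

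For injectivity I exploit the combinatorial structure of $\hat\Delta$: every cone is either (a) a cone of $\Delta$ not containing $\tau$ as a face, or (b) of the form $\rho_0+\tau'+\eta$ with $\tau'<\tau$ a proper face and $\eta\in\Link_\Delta\tau$. Correspondingly, the monomial basis of $\cA(\hat\Delta)$ splits by the $\hat\partial_0$-exponent $b_0$: a monomial with $b_0\ge 1$ is supported on $\{v_0\}\cup\tau'\cup\eta$, and its remaining factors, rewritten via $\hat\partial_i=\partial_i-\delta_{i\in\tau}\hat\partial_0$, live in $\hat\partial_0^{b_0}\cA(\Delta)$. After using the monic relation from the first part to confine $b_0$ to the range $\{0,\ldots,j-1\}$, a bijection between these contributions and a basis of the direct-sum side produces an explicit right inverse to $\Phi$, forcing injectivity and yielding the $A$-module decomposition. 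Reducing modulo $\bfm$ gives the corresponding decomposition of $\CH(\hat\Delta)$.

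For orthogonality, multiplicativity $\cA(\Delta)\cdot\hat\partial_0^a\cA(\Delta)\subseteq\hat\partial_0^a\cA(\Delta)$ implies that a product $fg$ of a degree-$p$ element from the first summand with a degree-$(d-p)$ element from the exceptional part $\bigoplus_{a\ge 1}\hat\partial_0^a\CH(\Delta)$ lies in $\bigoplus_{a\ge 1}\hat\partial_0^a\CH(\Delta)^{d-a}$. By the direct sum this is disjoint from $\CH^d(\Delta)$, so $fg=0$ in $\CH^d(\hat\Delta)$ reduces to the vanishing of the exceptional summands in top degree, equivalently to the assertion that $\pi^*\colon\CH^d(\Delta)\to\CH^d(\hat\Delta)$ is surjective in top degree; both sides are identified with $\RR$ via the canonical degree map preserved by $\pi^*$. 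The main obstacle is the injectivity in the middle step: translating the Stanley--Reisner relations of $\hat\Delta$ through the substitution $\hat\partial_i=\partial_i-\delta_{i\in\tau}\hat\partial_0$ and matching monomial bases term by term requires careful bookkeeping keyed to the enumeration of the cones of $\hat\Delta$ by proper faces of $\tau$ together with cones of $\Link_\Delta\tau$.
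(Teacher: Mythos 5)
Your surjectivity step coincides with the paper's: the Stanley--Reisner relation $\hat\partial_1\cdots\hat\partial_j=0$ becomes the monic integral relation $(\partial_1-\hat\partial_{0})\cdots(\partial_j-\hat\partial_{0})=0$ for $\hat\partial_{0}$ over $\cA(\Delta)$. Your injectivity step, however, is deferred to ``careful bookkeeping,'' and as stated it is not a proof: rewriting $\hat\partial_i=\partial_i-\hat\partial_{0}$ changes the $\hat\partial_{0}$-exponent of a monomial, so the claimed bijection between monomial bases is exactly the point at issue. The paper instead restricts to $\Star_\Delta\tau$ (where every term $\hat\partial_{0}^a f_a$ with $a\geq 1$ is supported, so nothing is lost) and uses the product structure $\cA(\Star_\Delta\tau)=\cA([\tau])\otimes\cA(\Link_\Delta\tau)$, $\cA(\Star_{\hat\Delta}\rho_0)=\cA([\hat\tau])\otimes\cA(\Link_\Delta\tau)$, reducing the directness of the sum to the single-cone case where $\cA([\hat\tau])=\cA([\tau])[\hat\partial_{0}]$ modulo the one monic relation.

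The genuine gap is in the orthogonality step. You correctly reduce orthogonality to the vanishing of $\hat\partial_{0}^a\CH(\Delta)$ in degree $d$ for $1\le a\le j-1$, but you then justify this vanishing by asserting it is equivalent to surjectivity of $\pi^*\colon\CH^d(\Delta)\to\CH^d(\hat\Delta)$, with ``both sides identified with $\RR$ via the canonical degree map preserved by $\pi^*$.'' This is circular --- the surjectivity of $\pi^*$ in top degree is precisely what the vanishing of the exceptional summands would establish, not an available input --- and it also invokes a degree isomorphism $\CH^d\isomto\RR$ that does not exist for the general pure simplicial fans to which the lemma applies ($\CH^d(\Delta)$ need not be one-dimensional). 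The missing idea is a dimension count on the link: multiplication by $\hat\partial_{0}^a$ annihilates everything outside $\Star_{\hat\Delta}\rho_0$, giving $\hat\partial_{0}^a\CH(\Delta)\isom\hat\partial_{0}^a\CH(\Link_\Delta\tau)$, and since $\Link_\Delta\tau$ has dimension $d-j$, the relevant graded piece $\CH^{d-a}(\Link_\Delta\tau)$ vanishes because $d-a>d-j$ for $a\le j-1$. Without this (or an equivalent argument) the orthogonality claim is unproven.
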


\begin{proof}
This result is a special case of the decomposition theorem \cite{BBFK, BL1}. We only sketch the proof here.

The pullback map takes $\partial_i$ to $\hat{\partial}_i + \hat\partial_{0}$ for $i=1,\ldots,j$, and to $\hat\partial_i$ for any other $i$. Since $v_1,\ldots, v_j$ do not generate a cone in $\hat\Delta$, the following equality holds in $\cA(\hat\Delta)$:
\[ 0= \hat\partial_1 \hat\partial_2\cdots \hat\partial_j= (\partial_1- \hat\partial_{0}) (\partial_2 - \hat\partial_{0}) \cdots (\partial_j - \hat\partial_{0}).\]
This gives an integral relation of degree $j$ for $\hat\partial_{0}$ over $\cA(\Delta)$. It shows that $\cA(\hat\Delta)$ is a sum subspaces as in the statement of the lemma, but possibly not a direct sum. To prove that there are no relations between the summands, it suffices to restrict all piecewise polynomial functions to the subfan $\Star\tau \subseteq \Delta$. Let's assume $\Delta=\Star\tau$. Then as abstract fans,
\[ \Delta = [\tau] \times \Link\tau, \quad \hat\Delta = [\hat\tau] \times \Link\tau,\]
where $[\tau]$ is the fan consisting of $\tau$ and all its faces, and $[\hat\tau]$ is its star subdivision. From this we get
\[ \cA(\Delta) = \cA([\tau]) \otimes \cA(\Link\tau), \quad \cA(\hat\Delta) = \cA([\hat\tau]) \otimes \cA(\Link\tau).\]
Now  $\cA([\hat\tau])$ is equal to  $\cA([\tau])[\partial_{0}]$ modulo the single relation given above.

In the direct sum decomposition of $\CH(\hat\Delta)$, the second summand is a module over the first summand. It follows that any product $fg$ as in the statement of the lemma lies in the second summand. We claim that the second summand is zero in degree $d$.
For each $i=1,\ldots, j-1$, we have an isomorphism
\[ \hat\partial_{0}^i \CH(\Delta) \isom \hat\partial_{0}^i \CH( \Link \tau).\]
Since $\Link\tau$ has dimension $d-j$, all these summands are zero in degree $d$.  
\end{proof}

Given an arbitrary simplicial fan $\hat\Delta$ with a ray $\rho_0$, we can ask: when is the fan $\hat\Delta$ the star subdivision of a simplicial fan $\Delta$ at a $j$-dimensional cone $\tau$ so that the new ray is $\rho_0$? If $\hat\Delta$ is a such a subdivision, then as abstract fans,
\[ \Link_{\hat{\Delta}} \rho_0 = \Pi^{j-1} \times L, \]
where $\Pi^{j-1}$ is the boundary fan of $\tau$ and $L$ is the link of $\tau$ in $\Delta$. Conversely, if the link of $\rho_0$ is such a product for some subfan $L\subseteq \hat\Delta$, then $\hat\Delta$ as an abstract fan is a star subdivision. To get $\Delta$, we replace the subfan 
\[ \Star_{\hat{\Delta}} \rho_0 = [\rho_0] \times \Pi^{j-1} \times L,\]
with the fan 
\[ \Star_\Delta\tau = [\tau] \times L.\]

\begin{remark} \label{rem-edge-contration}
The inverse operation of star subdivision (star assembly) can also be viewed as an edge contraction. Fix a vertex, say $v_1$ of the cone that is subdivided. Then $\Delta$ is obtained from $\hat{\Delta}$ by contracting the $2$-dimensional cone generated by $v_0, v_1$ to the ray generated by $v_1$. This operation also contracts every cone that contains $v_0, v_1$ to a cone of one smaller dimension. 
\end{remark}

 \section{Minkowski weights}
 
 Consider a fan $\Delta$ of dimension $d$ in $\RR^n$. A \MW of degree $j$ on $\Delta$ is a vector $(w_\sigma)$, indexed by $j$-dimensional cones $\sigma\in\Delta$, that satisfies a balancing condition for every $(j-1)$-dimensional cone $\tau\in\Delta$. Let $\sigma_1,\ldots, \sigma_s \in \Delta_j$ be the cones containing $\tau$ as a face. We may assume that $\sigma_i\bs \tau$ contains $v_i$. Then the balancing condition is that the vector
 \[ \sum_{i=1}^s w_{\sigma_i} v_i \in \RR^n\]
 lies in the span of $\tau$.
 
 Minkowski weights of degree $j$ form a vector space that we denote by $\MWj(\Delta)$. The space of Minkowski weights on $\Delta$ is dual to $\CH^j(\Delta)$:
 
 \begin{theorem} \label{thm-MW}
 There exists an isomorphism
 \[ \Phi: \big( \CH^j(\Delta) \big)^* \to \MWj(\Delta)\]
 that maps a linear function $\phi: \CH^j(\Delta) \to \RR$ to the vector  $(w_\sigma = \phi(\partial_\sigma))$.
 \end{theorem}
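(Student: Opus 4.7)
My plan is to verify that $\Phi$ is well-defined, injective, and surjective. Well-definedness and injectivity are short; the content lies in surjectivity, which I would reduce to Lemma~\ref{lem-extra-rel}.

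For well-definedness, fix $\phi \in (\CH^j(\Delta))^*$ and set $w_\sigma = \phi(\partial_\sigma)$. The balancing condition at a $(j-1)$-cone $\tau$ is equivalent, by duality, to the vanishing of $\sum_{\sigma \supset \tau} w_\sigma\, l(v_\sigma) = 0$ for every $l = \sum_k c_k \theta_k$ vanishing on $\Span\tau$, where $v_\sigma$ denotes the unique ray of $\sigma$ not in $\tau$. This equivalence holds because pullback by $\theta$ identifies $\Span(\theta_1,\ldots,\theta_n)$ with the space of linear functionals on $\RR^n$. To obtain it, I would expand $l = \sum_i l(v_i)\,\partial_i$ in $\cA^1(\Delta)$ and multiply by $\partial_\tau$: the terms with $v_i \in \tau$ drop out since $l(v_i) = 0$, and for $v_i \notin \tau$ the product $\partial_i\partial_\tau$ is $\partial_{\sigma_i}$ when $\tau \cup \{v_i\}$ spans a cone and $0$ otherwise, yielding
\[ l\partial_\tau \ =\ \sum_{\sigma \supset \tau} l(v_\sigma)\,\partial_\sigma \quad\text{in } \cA^j(\Delta). \]
Since $l \in \bfm\cA$, the right side vanishes in $\CH^j(\Delta)$; applying $\phi$ gives the balancing identity.

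Injectivity of $\Phi$ is immediate from the earlier lemma that the $\partial_\sigma$ span $\CH^j(\Delta)$. For surjectivity, I introduce $V_j = \bigoplus_{\sigma\in\Delta_j}\RR\cdot e_\sigma$ and the subspace $\mathrm{Bal}\subset V_j$ generated by the abstract balancing relations $\sum_{\sigma\supset\tau}l(v_\sigma)\,e_\sigma$; by definition $\MWj(\Delta) = (V_j/\mathrm{Bal})^*$. The surjection $V_j\twoheadrightarrow\CH^j(\Delta)$ sending $e_\sigma\mapsto\partial_\sigma$ kills $\mathrm{Bal}$ by the identity above, and surjectivity of $\Phi$ is equivalent to the induced map $V_j/\mathrm{Bal}\to\CH^j(\Delta)$ being injective. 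So I would suppose $\sum c_\sigma\partial_\sigma \in ((\theta_1,\ldots,\theta_n)\cA)_j$ and argue that $\sum c_\sigma e_\sigma \in \mathrm{Bal}$. Choosing among the $\theta_k$ a subfamily $\theta_{k_1},\ldots,\theta_{k_d}$ that itself forms a system of linear parameters (available by genericity), Lemma~\ref{lem-extra-rel} writes the element as a combination of (Rel)-type relations, which are precisely the generators of $\mathrm{Bal}$, plus a residual element of $((\theta_{k_1},\ldots,\theta_{k_d})\cA)_j$. Applying the same square-free/non-square-free decomposition $\theta_{k_i}\partial_\tau = \sum_\sigma l(v_\sigma)\partial_\sigma + (\text{non-square-free tail})$ to each square-free generator of this residual, and using that the original expression is purely square-free (so the non-square-free tails must cancel among themselves), identifies the residual too with a sum of balancing relations.

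The main obstacle I anticipate is the last bookkeeping step of surjectivity: tracking how the non-square-free monomials produced by $\theta_k$-multiplication pair off against one another, so that only the square-free balancing relations remain. Once this is carried out, the theorem is essentially a dual reformulation of Lemma~\ref{lem-extra-rel}.
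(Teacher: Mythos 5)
Your well-definedness argument is exactly the paper's Lemma~\ref{lem-MW-balance}, and injectivity from the spanning lemma is fine. The gap is in surjectivity. Your reduction is correct as far as it goes: Lemma~\ref{lem-extra-rel} peels off the parameters $\theta_{d+1},\ldots,\theta_n$ as combinations of (\ref{eq-extra-rel})-type elements, leaving a square-free residual lying in $(\theta_1,\ldots,\theta_d)\cA^j(\Delta)$. But the final step --- that this residual is itself a sum of balancing relations because ``the non-square-free tails must cancel among themselves'' --- is not a bookkeeping issue; it is the entire content of the theorem. The residual is $\sum_i \theta_i g_i$ with $g_i \in \cA^{j-1}(\Delta)$ arbitrary piecewise polynomials (not just square-free combinations $\partial_\tau$), and for a square-free generator the expansion $\theta_i\partial_\tau = \sum_{v_k\notin\tau}\theta_i(v_k)\partial_{\tau\cup k} + (\text{tail})$ is \emph{not} a relation of type (\ref{eq-extra-rel}) unless $\theta_i$ happens to vanish on $\tau$; cancelling the tails does not reorganize the square-free parts into relations where the linear form vanishes on the relevant $\tau$. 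Note that the paper explicitly records the statement ``all relations between the $\partial_\sigma$ are generated by (\ref{eq-extra-rel})'' as a \emph{consequence} of Theorem~\ref{thm-MW} (see the remark after Lemma~\ref{lem-MW-balance}), not as an ingredient, which is a sign that your route is circular in spirit.

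The paper closes surjectivity by a different mechanism: it constructs the inverse $w \mapsto \phi_w$ explicitly via the Brion localization formula $\phi_w(f) = \sum_{\sigma}w_\sigma f_\sigma/\chi_\sigma$ (Lemmas~\ref{lem-Brion} and~\ref{lem-Brion-n}, after reducing to $j=d$ by restricting to the $j$-skeleton --- a reduction your proposal also omits). The balancing condition enters there as a pole-cancellation statement along the hyperplanes spanned by the $(d-1)$-cones, which shows $\phi_w$ is a degree $-d$ map of $\RR[t_1,\ldots,t_d]$-modules and hence kills $(\theta_1,\ldots,\theta_d)\cA^{d-1}(\Delta)$ for degree reasons; Lemma~\ref{lem-extra-rel} and Lemma~\ref{lem-MW-balance} then handle the remaining parameters exactly as in your reduction. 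To repair your proof you would need either this construction or some equivalent input (e.g.\ a dimension count for $\CH^d$); the purely syntactic cancellation you describe will not produce it.
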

 
 This theorem is not new, see for example \cite{FultonSturmfels, AHK}. We give a proof for it because the construction of the inverse of $\Phi$ in degree $d$ defines for every $w\in \MWd(\Delta)$ a linear map $\phi_w: \CH^d(\Delta) \to \RR$. We use this to associate to every $w\in \MWd(\Delta)$ the mixed volume
 \[ \Vol_w : \RR[\partial_1,\ldots,\partial_N] \to \CH^d(\Delta) \stackrel{\phi_w}{\longrightarrow} \RR.\]

It suffices to prove the theorem for $j=d$. The case $j<d$ follows by restricting to the $j$-skeleton of $\Delta$.
When $\Delta$ is a complete fan in $\RR^d$, then $\MWd(\Delta)$ is $1$-dimensional, spanned by $(w_\sigma = 1/\det \sigma)$. Here $\det \sigma = |\det [v_1, \ldots, v_d]|$, where $v_1,\ldots, v_d$ are the generators of $\sigma$.

Let us start by proving that the map $\Phi$ of the theorem has image in $\MWj(\Delta)$.

\begin{lemma} \label{lem-MW-balance}
For a linear map $\phi: \cA^j(\Delta) \to \RR$, the vector $(w_\sigma=\phi(\sigma))$ lies in $\MWj(\Delta)$ if and only if $\phi$ maps degree $j$ relations of the form (\ref{eq-extra-rel}) to zero.

In particular, if $\phi$ descends to a map $\phi: \CH^j(\Delta) \to \RR$,  then the vector  $(w_\sigma = \phi(\partial_\sigma))$ lies in $\MWj(\Delta)$.
\end{lemma}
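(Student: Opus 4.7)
The plan is to unwind both the balancing condition and the vanishing of $\phi$ on relations (\ref{eq-extra-rel}) into identical term-by-term identities, so the equivalence becomes transparent.

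First I would fix a $(j-1)$-dimensional cone $\tau\in\Delta$ together with the $j$-dimensional cones $\sigma_1,\ldots,\sigma_s$ containing it, labelled so that $v_i$ is the unique generator of $\sigma_i$ lying outside $\tau$. The balancing condition at $\tau$ asserts $\sum_{i=1}^s w_{\sigma_i} v_i\in\Span\tau$, which is equivalent to saying that for every global linear functional $l$ on $\RR^n$ vanishing on $\Span\tau$, one has $l\bigl(\sum_i w_{\sigma_i} v_i\bigr)=\sum_i w_{\sigma_i}\,l(v_i)=0$. Since the linear parameters $\theta_1,\ldots,\theta_n$ are coordinates on $\RR^n$, every such $l$ is a combination of the $\theta_i$ and so is an admissible choice in the definition of (\ref{eq-extra-rel}).

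Next I would identify the relation (\ref{eq-extra-rel}) explicitly. Writing $l=\sum_k l(v_k)\,\partial_k$ as a piecewise linear function and multiplying by $\partial_\tau$, each term $l(v_k)\partial_k\partial_\tau$ with $v_k\in\tau$ drops out because $l$ vanishes there, and each term with $v_k\notin\tau$ vanishes in $\cA(\Delta)$ unless $\{v_k\}\cup\tau$ spans a $j$-dimensional cone in $\Delta$, i.e.\ unless $v_k=v_i$ for some cofacet $\sigma_i$. Hence
\[ l\,\partial_\tau=\sum_{i=1}^{s} l(v_i)\,\partial_{\sigma_i}\quad\text{in }\cA^j(\Delta),\]
with coefficients $b_i=l(v_i)$, which is exactly the shape of (\ref{eq-extra-rel}). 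Applying $\phi$ gives $\phi(l\,\partial_\tau)=\sum_i l(v_i)\,w_{\sigma_i}$, which therefore equals zero for every such $l$ iff the balancing condition holds at $\tau$. Letting $\tau$ and $l$ vary over all possibilities completes the equivalence.

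For the ``in particular'' clause, I would observe that if $\phi$ factors through $\CH^j(\Delta)=\cA^j(\Delta)/\bfm\cA^{j-1}(\Delta)$, then $\phi(l\,\partial_\tau)=0$ automatically because $l\in\bfm$, so every relation (\ref{eq-extra-rel}) of degree $j$ is killed by $\phi$ and the first part applies. There is no substantial obstacle here; the only point requiring mild care is confirming that every linear functional on $\RR^n$ vanishing on $\tau$ is realised inside $\Span(\theta_1,\ldots,\theta_n)$, which is immediate from the fact that the $\theta_i$ span $(\RR^n)^*$, and that the expansion $l=\sum_k l(v_k)\partial_k$ holds as piecewise linear functions on $\Delta$.
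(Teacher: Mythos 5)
Your argument is correct and follows essentially the same route as the paper: for each cofacet configuration $(\tau;\sigma_1,\ldots,\sigma_s)$ you identify the relation $l\,\partial_\tau=\sum_i l(v_i)\,\partial_{\sigma_i}$ and observe that $\phi$ applied to it computes exactly $l\bigl(\sum_i w_{\sigma_i}v_i\bigr)$, so vanishing on all such relations is term-for-term equivalent to balancing, and the ``in particular'' clause follows since $l\,\partial_\tau\in\bfm\cA(\Delta)$. The only difference is that you spell out the expansion $l=\sum_k l(v_k)\partial_k$ and the vanishing of the extraneous monomials, which the paper takes for granted.
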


\begin{proof}
Let $\tau$ be a cone of dimension $j-1$, and let $l$ be a global linear function on $\RR^n$ that vanishes on $\tau$. Then $(w_\sigma)$ satisfies the balancing condition at $\tau$ if each such $l$ vanishes on $v= \sum_{i=1}^s w_{\sigma_i} v_i$. Note that $l \partial_\tau$ defines an element as in equation (\ref{eq-extra-rel}):
\[ l\partial_\tau = b_1 \partial_{\sigma_1} + \cdots + b_s \partial_{\sigma_s} \in \bfm \cA(\Delta).\]
Since $b_i = l(v_i)$, the function $\phi$ applied to this expression is equal to to  $l(v)$:
\[ \phi(l\partial_\tau) = l(v_1) w_{\sigma_1} + \cdots +  l(v_s) w_{\sigma_s}.\]
Hence, $(w_\sigma)$ satisfies the balancing condition at $\tau$ if and only if $\phi$ maps degree $j$ relations (\ref{eq-extra-rel}) to zero.

The second claim follows because in this case $\phi$ vanishes on $\bfm \cA(\Delta)$, which contains all relations (\ref{eq-extra-rel}).
\end{proof}

\begin{remark}
Theorem~\ref{thm-MW} and Lemma~\ref{lem-MW-balance} together imply that $\CH(\Delta)$ is spanned with square free monomials $\partial_\sigma$ for $\sigma\in\Delta$, and all relations between these monomials are generated by   (\ref{eq-extra-rel}). 
\end{remark}

Next we find the inverse of $\Phi$ in the case where $j=d=n$. This construction generalizes the integration formula of Brion \cite{Brion}.
If $\Delta$ is a fan in $\RR^d$, then for a piecewise polynomial function $f=(f_\sigma)_{\sigma\in\Delta_d}$ we may view every piece $f_\sigma$ as a polynomial in $\RR[t_1,\ldots,t_d]$. Let $\chi_\sigma$ be the piecewise polynomial function $\partial_\sigma$ restricted to $\sigma$:
 \[ \chi_\sigma = \partial_{\sigma,\sigma} \in \RR[t_1,\ldots,t_d].\]

\begin{lemma} \label{lem-Brion}
Let $\Delta$ be a $d$-dimensional fan in $\RR^d$, and let $w=(w_\sigma) \in \MWd(\Delta)$. Consider the linear map
\begin{align} 
\phi_w: \cA(\Delta) & \to \RR(t_1, \ldots, t_d)  \nonumber \\
 f & \mapsto  \sum_{\sigma \in \Delta_{d}} w_\sigma \frac{f_\sigma}{\chi_\sigma}. \label{eq-Brion0}
 \end{align} 
Then the image of $\phi_w$ lies in $\RR[t_1, \ldots, t_d]$ and the induced map
\[ \phi_w: \cA(\Delta)  \to \RR[t_1, \ldots, t_d]\]
is a degree $-d$ homomorphism of graded $\RR[t_1,\ldots, t_d]$ modules. The map $\phi_w$ in degree $d$ descends to a linear map
\begin{equation*} 
 \phi_w: H^d(\Delta) \to \RR
 \end{equation*}
 such that $\phi_w(\partial_\sigma) = w_\sigma$.
\end{lemma}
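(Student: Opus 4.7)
The plan is to establish the four assertions in order: (i) the rational function $\phi_w(f)$ is a polynomial; (ii) the resulting map is a graded $\RR[t_1,\ldots,t_d]$-module homomorphism of degree $-d$; (iii) in degree $d$ it descends to a map $\CH^d(\Delta)\to\RR$; and (iv) this sends $\partial_\sigma\mapsto w_\sigma$. The hard part will be (i); the rest is formal.

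For (i), each summand $w_\sigma f_\sigma/\chi_\sigma$ has poles only along the hyperplanes $\Span\tau$ for facets $\tau$ of $\sigma$. Since $\sigma$ is simplicial, its $d$ facets span $d$ distinct hyperplanes, so every summand, and hence $\phi_w(f)$, has at worst a simple pole along any given hyperplane. Because a rational function on $\RR^d$ without codimension-$1$ poles is a polynomial, it suffices to check that the residue along each hyperplane $H=\{l=0\}$ vanishes. Fix $H$, and for each $(d-1)$-cone $\tau\in\Delta_{d-1}$ with $\Span\tau=H$ and each $\sigma\supset\tau$ in $\Delta_d$, let $v_\sigma^\tau$ be the generator of $\sigma$ not in $\tau$. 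The factor of $\chi_\sigma$ dual to $v_\sigma^\tau$ equals $l/l(v_\sigma^\tau)$; the remaining factors restrict on $\Span\tau$ to the dual basis of the generators of $\tau$, namely $\chi_\tau$; and $f_\sigma$ restricts to $f_\tau$ on $\Span\tau$ by continuity. Multiplying $w_\sigma f_\sigma/\chi_\sigma$ by $l$ and setting $l=0$ therefore yields $l(v_\sigma^\tau)\,w_\sigma f_\tau/\chi_\tau$. Collecting contributions,
\[ \mathrm{Res}_H \phi_w(f) \;=\; \sum_{\tau:\,\Span\tau=H}\frac{f_\tau}{\chi_\tau}\,l\!\left(\sum_{\sigma\supset\tau} w_\sigma v_\sigma^\tau\right), \]
and each inner vector lies in $\Span\tau\subseteq H$ by the balancing condition on $w$, so $l$ annihilates it.

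Once (i) is proved, the remaining claims follow quickly. On each cone $\sigma$, $f_\sigma$ and $\chi_\sigma$ are homogeneous of degrees $\deg f$ and $d$, and any global linear function $t\in A$ restricts to the same polynomial on every cone, which gives (ii). In particular, for $k<d$ the map $\phi_w$ would land in polynomials of negative degree and therefore vanishes on $\cA^k(\Delta)$; this forces $\phi_w$ to vanish on $\bfm\cA^{d-1}(\Delta)\subset\cA^d(\Delta)$, yielding the descent in (iii). Finally, for (iv), if $\sigma'\in\Delta_d$ differs from $\sigma$ then some generator of $\sigma$ is absent from $\sigma'$ and $(\partial_\sigma)_{\sigma'}=0$, while $(\partial_\sigma)_\sigma=\chi_\sigma$ by definition of $\chi_\sigma$, so $\phi_w(\partial_\sigma)=w_\sigma\chi_\sigma/\chi_\sigma=w_\sigma$.
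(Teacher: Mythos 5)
Your proposal is correct and follows essentially the same route as the paper: the pole-cancellation along each hyperplane via the balancing condition is the identical key computation, and the descent and evaluation steps match as well. Your only (welcome) refinement is to note explicitly that the residue along a hyperplane $H$ is a sum over all $(d-1)$-cones $\tau$ spanning $H$, each group of which vanishes separately by balancing.
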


\begin{proof}
    We first show that the image of $\phi_w$ lies in $\RR[t_1, \dots, t_d]$.  Since each $\chi_\sigma$ is a product of linear functions vanishing on hyperplanes spanned by the facets of $\sigma$, it suffices to check that $\sum_{\sigma \in \Delta_{n}} w_\sigma \frac{f_\sigma}{\chi_\sigma}$ has no  poles along such hyperplanes.
    
    Fix $f \in \cA(\Delta)$ and a cone $\tau \in \Delta_{d-1}$. Let $S= \{\sigma_1,\ldots,\sigma_s\}$ be the set of maximal cones containing $\tau$. Label the generators of $\tau$ as $v_1, \dots, v_{d-1}$, and the generators of $\sigma \in S$ as $v_1, \dots, v_{d-1}, v_d^\sigma$. Let $t$ be a linear function defining the hyperplane spanned by $\tau$. It then suffices to show
    \[
        t \sum_{\sigma\in S} w_\sigma \frac{f_\sigma}{\chi_\sigma}
    \] 
    vanishes on the hyperplane. On the hyperplane we have $f_{\sigma} = f_{\sigma'}$ for any two $\sigma,\sigma' \in S$, so define $f_\tau = f_\sigma$. For each $\sigma \in S$, $\chi_\sigma$ is the piecewise polynomial function  $\partial_1 \cdots \partial_{d-1} \partial_d^\sigma$ restricted to $\sigma$. Each piecewise linear functions $\partial_i$ for $i=1\ldots, d-1$ restricts to the same linear function on the hyperplane, independent of $\sigma$. The piecewise linear function $\partial_d^\sigma$ restricted to $\sigma$ is a linear function vanishing on the hyperplane, $\partial_d^\sigma|_\sigma = t/t(v_d^\sigma)$. So together, 
    \begin{align*}
        t\sum_{\sigma \in S} w_\sigma \frac{f_\sigma}{\chi_\sigma} &= t\frac{f_\tau}{\partial_1\dots\partial_{d-1}}\sum_{\sigma \in S} w_\sigma \frac{t(v_n^\sigma)}{t} = \frac{f_\tau}{\partial_1\dots\partial_{d-1}} t\left( \sum_{\sigma \in S}w_\sigma v_n^\sigma \right) = 0 
    \end{align*} by the balancing condition of Minkowski weights. 

    To see that $\phi_w$ descends to $\CH^d(\Delta) \to \RR$, recall $\CH^d(\Delta) = \cA^d/(t_1, \dots, t_d)\cA^{d-1}(\Delta)$, and we observe that $\cA^{d-1}(\Delta)$ maps to zero as $\phi_w$ is a degree $-d$ map. Since each $\partial_\sigma$ is non-zero only on $\sigma$, we indeed have $\phi_w(\partial_\sigma)=w_\sigma$.
\end{proof}

The previous lemma can be extended to the case $j=d<n$.

\begin{lemma} \label{lem-Brion-n}
Let $\Delta$ be a $d$-dimensional fan with a system of linear parameters $\theta_1,\ldots, \theta_n$, and let $w=(w_\sigma)$ be in $\MWd(\Delta)$. Assume that $\theta_1, \ldots, \theta_d$ also define a linear system of parameters on $\Delta$. Then, viewing $\Delta$ as a fan in $\RR^d$,  $w$ is again a mixed volume, and the map $\phi_w: \cA^d(\Delta) \to \RR$ defined in Lemma~\ref{lem-Brion}  descends to a linear map
\[ \phi_w: \cA^d(\Delta)/(\theta_1,\ldots,\theta_n)\cA^{d-1}(\Delta) \to \RR,\]
such that $\phi(\partial_\sigma)=w_\sigma$.
\end{lemma}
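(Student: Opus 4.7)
The plan is to deduce this from Lemma~\ref{lem-Brion} by combining it with Lemmas~\ref{lem-MW-balance} and~\ref{lem-extra-rel}. First I would verify that $w$ remains a Minkowski weight when $\Delta$ is re-embedded in $\RR^d$ via the truncated system $\theta_1,\ldots,\theta_d$. Writing $\pi\colon \RR^n \to \RR^d$ for the projection onto the first $d$ coordinates, the two embeddings are related by $\theta' = \pi \circ \theta$; thus for any $(d-1)$-dimensional cone $\tau$, the balancing condition $\sum_i w_{\sigma_i} \theta(v_i) \in \Span \theta(\tau) \subseteq \RR^n$ pushes forward via $\pi$ to the analogous condition in $\RR^d$. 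The hypothesis that $\theta_1,\ldots,\theta_d$ is itself a system of linear parameters guarantees $\theta'$ is injective on cones, so $w$ is genuinely a Minkowski weight for the $\RR^d$-embedding.

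Next, applying Lemma~\ref{lem-Brion} to this $\RR^d$-embedded $\Delta$ produces the linear map $\phi_w\colon \cA^d(\Delta) \to \RR$ defined by the Brion formula, satisfying $\phi_w(\partial_\sigma) = w_\sigma$, and automatically vanishing on $(\theta_1,\ldots,\theta_d)\cA^{d-1}(\Delta)$. The only remaining task is to show that $\phi_w$ also kills the larger ideal $(\theta_1,\ldots,\theta_n)\cA^{d-1}(\Delta)$. By Lemma~\ref{lem-extra-rel}, modulo $(\theta_1,\ldots,\theta_d)\cA^{d-1}(\Delta)$ this ideal is spanned in degree $d$ by relations of the form (\ref{eq-extra-rel}), namely products $l\partial_\tau$ with $\tau$ a $(d-1)$-dimensional cone and $l$ a linear combination of $\theta_1,\ldots,\theta_n$ that vanishes on $\tau$. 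By Lemma~\ref{lem-MW-balance}, the assumption that $w$ is a Minkowski weight for the original $\RR^n$-embedding is exactly the statement that any linear functional sending $\partial_\sigma \mapsto w_\sigma$ annihilates every such relation. Therefore $\phi_w$ descends to $\cA^d(\Delta)/(\theta_1,\ldots,\theta_n)\cA^{d-1}(\Delta) \to \RR$, and the identity $\phi_w(\partial_\sigma) = w_\sigma$ is preserved.

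I do not anticipate a serious obstacle; the content is essentially a careful bookkeeping of the three previously established lemmas. The one point that deserves attention is that Lemma~\ref{lem-Brion}'s construction of $\phi_w$ only perceives the $\RR^d$-embedding, whereas Lemma~\ref{lem-MW-balance}'s vanishing criterion for the extra relations depends on the full $\RR^n$-embedding. These two viewpoints are compatible precisely because Lemma~\ref{lem-MW-balance}'s criterion refers only to the values $\phi_w(\partial_\sigma) = w_\sigma$, which the Brion formula produces directly, so invoking it with $l$ ranging over linear combinations of all $\theta_1,\ldots,\theta_n$ yields the required descent.
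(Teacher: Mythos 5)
Your argument is correct and follows the paper's own proof essentially step for step: project to $\RR^d$ to see $w$ is still a Minkowski weight, invoke Lemma~\ref{lem-Brion} to kill $(\theta_1,\ldots,\theta_d)\cA^{d-1}(\Delta)$, then use Lemma~\ref{lem-extra-rel} together with Lemma~\ref{lem-MW-balance} to kill the remaining relations of the form (\ref{eq-extra-rel}). Your closing remark about the compatibility of the two embeddings is a sound elaboration of the same logic, not a departure from it.
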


\begin{proof}
The map $\Delta \to \RR^d$ is the composition of the map to $\Delta\to\RR^n$ with a linear projection $\RR^n\to\RR^d$. The projection  maps a balancing relation to a balancing relation, hence $w$ is a \MW on the fan $\Delta$ in $\RR^d$.

Define the map $\phi_w: \cA(\Delta) \to \RR$ using Lemma~\ref{lem-Brion} when viewing $\Delta$ as a fan in $\RR^d$. This map satisfies $\phi_w(\partial_\sigma) = w_\sigma$ for $\sigma\in\Delta_d$. By Lemma~\ref{lem-extra-rel}, 
 $(\theta_1,\ldots, \theta_n)\cA^{d-1}(\Delta)$ is spanned by  $(\theta_1,\ldots, \theta_d)\cA^{d-1}(\Delta)$ and elements of the form (\ref{eq-extra-rel}). The map $\phi_w$ vanishes on the first subspace by Lemma~\ref{lem-Brion}, and on the elements of the form (\ref{eq-extra-rel}) by Lemma~\ref{lem-MW-balance}.
\end{proof}

We are now ready to prove the theorem.

\begin{proof}[Proof of Theorem~\ref{thm-MW}]
Assume that $j=d$. Let $\Delta$ have a linear system of parameters $\theta_1,\ldots,\theta_n$. After a linear change of coordinates we may assume that the first $d$ of them $\theta_1,\ldots,\theta_d$ also form a linear system of parameters. 
The previous lemma now defines a map $\phi_w: \CH^d(\Delta) \to \RR$ such that $\phi(\sigma)=w_\sigma$.
The map $w\mapsto \phi_w$ is the inverse of $\Phi$.
\end{proof}

\section{Mixed volumes}

Let $\Delta$ be a $d$-dimensional fan in $\RR^n$, and let $w\in\MWd(\Delta)$. We call the linear map
\[ \Vol_w: \RR[\partial_1,\ldots,\partial_N]_d \to \cA^d(\Delta) \stackrel{\phi_w}{\longrightarrow} \RR\]
the mixed volume associated to $w$. Let $\partial_i=\frac{\partial}{\partial x_i}$ be the partial derivatives. Then there exists a homogeneous degree $d$ polynomial in variables $x_1,\ldots, x_N$ such that the mixed volume is defined by applying a degree $d$ partial derivative to this polynomial. We denote the polynomial also $\Vol_w(x_1,\ldots, x_N)$.

\subsection{Mixed volume as an evaluation map}

Let $\Delta$ be a fan in $\RR^d$. In this case Lemma~\ref{lem-Brion} provides a formula for the  map $\phi_w: \cA^d{\Delta} \to \RR$:
\begin{equation} \label{eq-Brion} 
\phi_w(f) =  \sum_{\sigma \in \Delta_{n}} w_\sigma \frac{f_\sigma}{\chi_\sigma}.
\end{equation}
When $\Delta$ is a fan in $\RR^n$, then Lemma~\ref{lem-Brion-n} says that $\phi_w$ can be defined by first projecting the fan to $\RR^d$. The map $\phi_w$ does not depend on the projection as long as it is general enough. We may therefore restrict to the case $n=d$ for the rest of this subsection.

We fix a general vector $v_0\in\RR^d$ and evaluate the rational function on the right hand side of $(\ref{eq-Brion})$ at $v_0$:
\[\phi_w(f) =  \sum_{\sigma \in \Delta_{n}} w_\sigma \frac{f_\sigma(v_0)}{\chi_\sigma(v_0)}.\]
The result is a constant that does not depend on which $v_0$ is used.
We will next find the value of $f_\sigma/\chi_\sigma$ at $v_0$. Let us start by evaluating the restriction of $\partial_i$ to $\sigma$ at $v_0$.

Let $\sigma$ be a cone of dimension $d$, and without loss of generality assume that $\sigma$ is generated by $v_1,\ldots, v_d$. Let us record the coordinates of the vectors $v_0,v_1,\ldots, v_d$ in a $d\times(d+1)$ matrix with $v_i$ as columns:
\[ A_\sigma = [a_{i,j}] =  [v_0, v_1, v_2, \ldots, v_d]. \]
There exists an isomorphism of $\RR$-algebras
\begin{align*}
\alpha_\sigma: \RR[t_1,\ldots, t_d] & \to \RR[\partial_1,\ldots, \partial_d], \\
t_i &\mapsto a_{i,1} \partial_1 + a_{i,2} \partial_2 + \cdots + a_{i,d} \partial_d.
\end{align*}
The inverse of $\alpha_\sigma$ expresses a piecewise polynomial function restricted to $\sigma$ as a polynomial in $t_i$.

Let $X_i^\sigma\in\RR$ for $i=0,\ldots, d$ be $(-1)^i$ times the determinant of the matrix $A_\sigma$ with its column containing $v_i$ removed.

\begin{lemma} \label{lem-eval}
The polynomial $\alpha^{-1}_\sigma(\partial_i)$ evaluated at $v_0$ is $-X_i^\sigma/X_0^\sigma$.
\end{lemma}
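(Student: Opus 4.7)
The plan is to reduce the statement to the barycentric-coordinate expansion of $v_0$ in the basis $v_1,\ldots,v_d$ of $\RR^d$, followed by Cramer's rule. Since $\sigma$ is $d$-dimensional in $\RR^d$, its generators $v_1,\ldots,v_d$ form a basis, and the piecewise linear function $\partial_i$ restricted to $\sigma$ is by definition the linear functional on $\RR^d$ satisfying $\partial_i|_\sigma(v_j)=\delta_{ij}$ for $j=1,\ldots,d$. The polynomial $\alpha_\sigma^{-1}(\partial_i)\in\RR[t_1,\ldots,t_d]$ evaluated at any point is precisely the value of this linear functional at that point, so what we must compute is $\partial_i|_\sigma(v_0)$.

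First I would write $v_0=\sum_{j=1}^d c_j v_j$ in the basis $\{v_j\}$, so that
\[ \alpha_\sigma^{-1}(\partial_i)(v_0) \;=\; \partial_i|_\sigma(v_0) \;=\; c_i. \]
By Cramer's rule applied to the linear system $[v_1|\cdots|v_d]\,(c_1,\ldots,c_d)^T=v_0$,
\[ c_i \;=\; \frac{\det[v_1,\ldots,v_{i-1},v_0,v_{i+1},\ldots,v_d]}{\det[v_1,\ldots,v_d]}. \]
The denominator matches $X_0^\sigma$ directly from the definition, since the column of $A_\sigma$ containing $v_0$ is the zeroth column and the sign $(-1)^0=1$.

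Finally I would reconcile the numerator with $X_i^\sigma$. Moving $v_0$ from the first column of $A_\sigma$ past $v_1,\ldots,v_{i-1}$ into the $i$-th slot costs $i-1$ adjacent transpositions, so
\[ \det[v_1,\ldots,v_{i-1},v_0,v_{i+1},\ldots,v_d] \;=\; (-1)^{i-1}\det[v_0,v_1,\ldots,\hat v_i,\ldots,v_d] \;=\; (-1)^{i-1}(-1)^{-i}\,X_i^\sigma \;=\; -X_i^\sigma. \]
Combining the two formulas gives $c_i = -X_i^\sigma/X_0^\sigma$, which is the claimed identity. No real obstacle is expected; the only delicate point is tracking sign conventions, and a direct check for small $i$ (say $i=1$) is enough to be sure the $i-1$ transpositions and the $(-1)^i$ in the definition of $X_i^\sigma$ cancel correctly to produce the single overall minus sign.
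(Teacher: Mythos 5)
Your proof is correct and rests on the same underlying identity as the paper's: both reduce the claim to computing the coefficient $c_i$ in the expansion $v_0=\sum_j c_j v_j$ and expressing it as a ratio of determinants, and your sign bookkeeping ($(-1)^{i-1}$ from the column moves cancelling against the $(-1)^i$ in the definition of $X_i^\sigma$ to leave a single minus sign) checks out. The only difference is presentational: the paper avoids Cramer's rule and the explicit transposition count by observing that both $\partial_i|_\sigma(v_0)$ and $-X_i^\sigma/X_0^\sigma$ are linear in $v_0$ and agree when $v_0=v_j$, which is a slicker route to the same computation.
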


\begin{proof} 
Suppose $v_0 = v_j$ for some $j=1,\ldots, d$. Then 
\[ X_i^\sigma = \begin{cases} 
 -X_0^\sigma & \text{if $i=j$,} \\ 
0 & \text{otherwise}
\end{cases}\]
 In this case $-X_i^\sigma/X_0^\sigma$  is the correct value of $\partial_i$ at $v_j$. A general $v_0$ is a linear combination of $v_1,\ldots, v_d$. The value of $\partial_i$ at $v$ is linear in $v$, hence is also correct in general.
\end{proof}

\begin{theorem} \label{thm-MV-eval}
The mixed volume $\Vol_w: \RR[\partial_1,\ldots,\partial_N]_d \to \RR$ is 
\[ \Vol_w f(\partial_1,\ldots, \partial_N) = \sum_{\sigma \in \Delta_{n}} w_\sigma \frac{f(X_1^\sigma, X_2^\sigma,\ldots, X_d^\sigma)}{X_1^\sigma X_2^\sigma\cdots X_d^\sigma}.\]
Here in each summand we evaluate $f$ by setting $\partial_i$ equal to $X_i^\sigma$ if $v_i$ is a generator of $\sigma$, and zero otherwise.
\end{theorem}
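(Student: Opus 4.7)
The plan is to reduce to the case where $\Delta$ is a fan in $\RR^d$ via Lemma~\ref{lem-Brion-n} (so $n=d$), and then specialize the Brion-type formula (\ref{eq-Brion}) to an evaluation at a chosen generic point $v_0 \in \RR^d$. Since the rational function $\sum_\sigma w_\sigma f_\sigma/\chi_\sigma$ is in fact a polynomial (by Lemma~\ref{lem-Brion}) of total degree $0$ when $f$ has degree $d$, its value at any generic $v_0$ is a constant, independent of $v_0$. The job is then to evaluate each summand at $v_0$ and match it with the claimed expression.

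Fix a maximal cone $\sigma \in \Delta_d$, and after relabeling assume its generators are $v_1,\dots,v_d$. The isomorphism $\alpha_\sigma \colon \RR[t_1,\ldots,t_d] \to \RR[\partial_1,\ldots,\partial_d]$ sends $f_\sigma \in \RR[t_1,\ldots,t_d]$ (the restriction of $f(\partial_1,\ldots,\partial_N)$ to $\sigma$) to the polynomial obtained from $f$ by first deleting all variables $\partial_i$ with $\rho_i \not\le \sigma$ (these restrict to $0$ on $\sigma$) and keeping $\partial_1,\ldots,\partial_d$. Thus $f_\sigma$ evaluated at $v_0$ equals $f$ evaluated at the numbers $\alpha_\sigma^{-1}(\partial_i)(v_0)$ for the generators of $\sigma$, with the other $\partial_i$ replaced by $0$. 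By Lemma~\ref{lem-eval}, these values are $-X_i^\sigma/X_0^\sigma$. Since $f$ is homogeneous of degree $d$, I can pull the common factor $(-1/X_0^\sigma)^d$ outside:
\[
f_\sigma(v_0) = \left(-\frac{1}{X_0^\sigma}\right)^{d} f\bigl(X_1^\sigma,\ldots, X_d^\sigma,0,\ldots,0\bigr).
\]

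The same computation applied to $\chi_\sigma = \partial_1 \partial_2 \cdots \partial_d$ (restricted to $\sigma$), which is also homogeneous of degree $d$, gives
\[
\chi_\sigma(v_0) = \left(-\frac{1}{X_0^\sigma}\right)^{d} X_1^\sigma X_2^\sigma \cdots X_d^\sigma,
\]
so the factor $(-1/X_0^\sigma)^d$ cancels in the ratio. Substituting into (\ref{eq-Brion}) yields exactly the claimed formula. I expect no serious obstacle; the main care needed is bookkeeping when reducing an arbitrary $f \in \RR[\partial_1,\ldots,\partial_N]_d$ to its restriction to $\sigma$ (namely, verifying that setting $\partial_i = 0$ for $\rho_i \not\le \sigma$ commutes with applying $\alpha_\sigma^{-1}$ and evaluating), and verifying that the evaluation at $v_0$ is well-defined for generic $v_0$ since all $X_i^\sigma$ are nonzero for generic choice.
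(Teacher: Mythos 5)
Your proposal is correct and follows essentially the same route as the paper: evaluate the Brion-type sum (\ref{eq-Brion}) at a generic $v_0$, substitute $-X_i^\sigma/X_0^\sigma$ for $\partial_i$ via Lemma~\ref{lem-eval}, and cancel the common factor $(-1/X_0^\sigma)^d$ from numerator and denominator using homogeneity of degree $d$. The extra bookkeeping you flag (setting $\partial_i=0$ for non-generators of $\sigma$ and genericity of $v_0$) is handled implicitly in the paper's two-line proof and raises no issue.
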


\begin{proof} 
We evaluate the sum (\ref{eq-Brion}) at $v_0$ by substituting $-X_i^\sigma/X_0^\sigma$ in $\partial_i$. Since both numerator and denominator are homogeneous of degree $d$, we may cancel the negative signs and $(X_0^\sigma)^d$.
\end{proof}

\begin{corollary} \label{cor-MV-derivative}
The mixed volume as a polynomial is
\[ \Vol_w(x_1,\ldots,x_N) = \sum_{\sigma \in \Delta_{d}} \frac{w_\sigma}{d!}  \frac{(X_1^\sigma x_1^\sigma+  X_2^\sigma x_2^\sigma+ \cdots + X_d^\sigma x_d^\sigma)^d}{X_1^\sigma X_2^\sigma\cdots X_d^\sigma}.\]
Here $x_1^\sigma,\ldots, x_d^\sigma$ are the variables $x_i$ that correspond to the generators $v_i$ of $\sigma$.
\end{corollary}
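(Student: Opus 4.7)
The plan is to recognize the corollary as the direct translation of Theorem~\ref{thm-MV-eval} from the functional form $\Vol_w\colon \RR[\partial_1,\ldots,\partial_N]_d\to\RR$ into the polynomial form $\Vol_w(x_1,\ldots,x_N)\in\RR[x_1,\ldots,x_N]_d$, using the standard duality between degree $d$ constant coefficient differential operators and degree $d$ polynomials.

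First I would record the characterizing property: the mixed volume polynomial $\Vol_w(x_1,\ldots,x_N)$ is the unique degree $d$ homogeneous polynomial such that, for every degree $d$ differential operator $f(\partial_1,\ldots,\partial_N)$, one has $f(\partial)\bigl(\Vol_w(x_1,\ldots,x_N)\bigr) = \Vol_w\bigl(f(\partial)\bigr)$; this is how $\Vol_w$ was turned into a polynomial in the introduction. So it suffices to check that applying an arbitrary degree $d$ operator $f(\partial)$ to the right hand side of the stated formula reproduces the expression in Theorem~\ref{thm-MV-eval}.

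The key calculation is the standard identity
\[
f(\partial_1,\ldots,\partial_N)\left[\frac{(c_1x_1+\cdots+c_Nx_N)^d}{d!}\right]=f(c_1,\ldots,c_N),
\]
valid for any degree $d$ homogeneous $f$ and any constants $c_i\in\RR$. I would verify this on monomials $\prod\partial_i^{a_i}$ with $\sum a_i=d$ via the multinomial expansion. Applying this identity termwise to the proposed formula, with $c_i=X_i^\sigma$ when $v_i$ generates $\sigma$ and $c_i=0$ otherwise, each summand contributes
\[
\frac{w_\sigma}{X_1^\sigma X_2^\sigma\cdots X_d^\sigma}\,f(X_1^\sigma,\ldots,X_d^\sigma),
\]
where $f$ is evaluated by setting $\partial_i=X_i^\sigma$ on the generators of $\sigma$ and $\partial_i=0$ elsewhere. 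Summing over $\sigma\in\Delta_d$ yields precisely the expression in Theorem~\ref{thm-MV-eval}, which finishes the proof.

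I do not expect a serious obstacle here; the corollary is a formal bookkeeping consequence of Theorem~\ref{thm-MV-eval} and the $\binom{d}{a_1,\ldots,a_N}$ identity. The only delicate point is the convention about which variables appear in each summand, but this is resolved by treating $x_i^\sigma$ as $x_i$ for $v_i$ a generator of $\sigma$, consistent with the zero substitution of $\partial_i$ at non-generators.
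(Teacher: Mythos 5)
Your proposal is correct and matches the paper's own argument: both reduce the corollary to the identity $\frac{1}{d!}\,\partial_1^{a_1}\cdots\partial_d^{a_d}\,(X_1x_1+\cdots+X_dx_d)^d = X_1^{a_1}\cdots X_d^{a_d}$, which shows that applying a degree~$d$ derivative $f$ to the displayed polynomial is the same as evaluating $f$ at the $X_i^\sigma$, so the formula follows termwise from Theorem~\ref{thm-MV-eval}. The paper's proof is just a terser version of yours.
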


\begin{proof}
If $\partial_1^{a_1} \cdots \partial_d^{a_d}$ is a degree $d$ partial derivative with $a_i\in \ZZ_{\geq 0}$, and $X_1, \ldots, X_n \in \RR$ are constants, then 
\[  \frac{1}{d!} \partial_1^{a_1} \cdots \partial_d^{a_d} (X_1 x_1+  X_2 x_2+ \cdots + X_d x_d)^d = X_1^{a_1} \cdots X_d^{a_d}.\]
This shows that applying  a derivative $f$ to the polynomial in $x_i$ is the same as evaluating $f$ at $X_i$.
\end{proof}

The previous corollary can be restated as follows. Let $\hat{A}_\sigma$ be the $(d+1)\times(d+1)$ matrix where we add to $A_\sigma$ the first row $(0,x^\sigma_1, \ldots, x^\sigma_d)$. Then
\[ \det(\hat{A}_\sigma) = X_1^\sigma x_1^\sigma+  X_2^\sigma x_2^\sigma+ \cdots + X_d^\sigma x_d^\sigma.\]
Let us also write $c_\sigma$ for the coefficient of $x^\sigma_1\cdots x^\sigma_d$ in the $d$-th power of this linear form. Then
\[ c_\sigma = d! X^\sigma_1\cdots X^\sigma_d.\]
Corollary~\ref{cor-MV-derivative} can now be stated as:

\begin{corollary} \label{cor-MV-derivative2}
The mixed volume as a polynomial is
\[ \Vol_w(x_1,\ldots,x_N) = \sum_{\sigma \in \Delta_{d}} w_\sigma  \frac{\det(\hat{A}_\sigma)^d}{c_\sigma}.\]
\end{corollary}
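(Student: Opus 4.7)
The plan is to observe that Corollary~\ref{cor-MV-derivative2} is merely a repackaging of Corollary~\ref{cor-MV-derivative} in terms of the augmented determinant, so the proof reduces to verifying the two identities
\[ \det(\hat{A}_\sigma) = X_1^\sigma x_1^\sigma + X_2^\sigma x_2^\sigma + \cdots + X_d^\sigma x_d^\sigma, \qquad c_\sigma = d!\,X_1^\sigma X_2^\sigma \cdots X_d^\sigma, \]
which are already asserted in the paragraph preceding the statement. Both are routine; the work is to justify them explicitly.

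First I would expand $\det(\hat{A}_\sigma)$ along the first row. Since the $(1,1)$-entry of $\hat A_\sigma$ is $0$ and the remaining entries of the first row are $x^\sigma_1,\ldots, x^\sigma_d$, cofactor expansion gives
\[ \det(\hat A_\sigma) = \sum_{i=1}^d (-1)^{1+(i+1)} x_i^\sigma \det(M_i), \]
where $M_i$ is the $d\times d$ minor obtained by deleting row $1$ and the column of $\hat A_\sigma$ containing $x_i^\sigma$ (equivalently, the column of $A_\sigma$ containing $v_i$). By the definition of $X_i^\sigma$ preceding Lemma~\ref{lem-eval}, $(-1)^i\det(M_i) = X_i^\sigma$, so the signs combine to give the claimed linear form.

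Next I would read off $c_\sigma$, the coefficient of $x_1^\sigma \cdots x_d^\sigma$ in $\det(\hat A_\sigma)^d$. Writing $L = \sum_{i=1}^d X_i^\sigma x_i^\sigma$ and expanding $L^d$ by the multinomial theorem, the unique multi-index producing the squarefree monomial $x_1^\sigma\cdots x_d^\sigma$ is $(1,1,\ldots,1)$, with multinomial coefficient $d!$. Hence the coefficient is $d!\, X_1^\sigma \cdots X_d^\sigma$, giving the formula for $c_\sigma$.

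Finally I would substitute these two identities into Corollary~\ref{cor-MV-derivative}: the numerator $(X_1^\sigma x_1^\sigma + \cdots + X_d^\sigma x_d^\sigma)^d$ becomes $\det(\hat A_\sigma)^d$, and the factor $\frac{1}{d!\, X_1^\sigma\cdots X_d^\sigma}$ becomes $\frac{1}{c_\sigma}$. This yields precisely the stated formula. There is no real obstacle here — the only thing to watch is getting the signs right in the cofactor expansion, and the fact that the definition of $X_i^\sigma$ already incorporates the $(-1)^i$, so the signs of $x_i^\sigma X_i^\sigma$ in the linear form come out uniformly positive.
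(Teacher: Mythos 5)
Your proposal is correct and follows exactly the route the paper takes: Corollary~\ref{cor-MV-derivative2} is presented there as a direct restatement of Corollary~\ref{cor-MV-derivative} via the two identities $\det(\hat{A}_\sigma) = \sum_i X_i^\sigma x_i^\sigma$ and $c_\sigma = d!\,X_1^\sigma\cdots X_d^\sigma$, which the paper simply asserts in the preceding paragraph. Your cofactor expansion (with the $(-1)^i$ absorbed into the definition of $X_i^\sigma$) and the multinomial computation of $c_\sigma$ supply the routine verifications the paper omits, so the argument is complete and matches the intended one.
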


\subsection{Decomposition of mixed volume.}

When the \MW $w$ is a linear combination of Minkowski weights, $w= \sum_i b_i w_i$, then the mixed volume is also a linear combination of mixed volumes $\Vol_w=\sum_i b_i \Vol_{w_i}$. We are interested in the case where each $w_i$ is supported on a subfan  $\Delta_i\subseteq \Delta$. In this case we can view $\Vol_{w_i}$ as a mixed volume on $\Delta_i$.  We describe one such decomposition that works for all fans.

Let $\Delta$ be a fan in $\RR^d$. Construct a new fan $\tilde{\Delta}$ as follows. Choose a new ray generated by a sufficiently general vector $v_0$, and for every cone $\tau\in \Delta_{d-1}$, let $\sigma_\tau$ be the $d$-dimensional cone generated by $\tau$ and $v_0$. These cones $\sigma_\tau$ together with cones in $\Delta$ generate $\tilde{\Delta}$. 

For each $\sigma\in\Delta_d$, let $\Delta_\sigma$ be the subfan of $\tilde{\Delta}$ with maximal cones $\sigma$ and all $\sigma_\tau$ for $\tau\leq \sigma$. This fan is the boundary fan of a simplicial $(d+1)$-cone. Note that any \MW $u$ on a subfan such as $\Delta_\sigma$ or $\Delta$ can be viewed as a \MW on $\tilde{\Delta}$, extending it by zero outside the subfan.

\begin{theorem} 
For  each $\sigma\in\Delta_d$, the weight $w_\sigma$ on $\sigma$ extends to a unique \MW $u^\sigma$ on $\Delta_\sigma$. Then 
\[ w =\sum_{\sigma\in \Delta_d} u^\sigma.\]
\end{theorem}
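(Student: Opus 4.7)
The plan is to proceed in three stages: (i) prove existence and uniqueness of $u^\sigma$, (ii) check that each $u^\sigma$, extended by zero, is a Minkowski weight on $\tilde\Delta$, and (iii) verify the identity $w=\sum_\sigma u^\sigma$ cone by cone. For (i), I observe that $\Delta_\sigma$ is, as an abstract fan, the boundary of a $(d+1)$-dimensional simplicial cone on vertices $v_0,v_1,\ldots,v_d$, where $v_1,\ldots,v_d$ are the generators of $\sigma$. Since $v_0,\ldots,v_d$ are $d+1$ vectors in $\RR^d$ with $v_1,\ldots,v_d$ linearly independent, they satisfy a unique up-to-scalar linear relation, and the standard computation for boundaries of simplicial cones (used already in the complete fan case of Theorem~\ref{thm-MW}) shows that $\MWd(\Delta_\sigma)$ is one-dimensional, with a generator that is nonzero on every maximal cone. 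Prescribing the value $w_\sigma$ on $\sigma$ therefore pins down a unique $u^\sigma\in\MWd(\Delta_\sigma)$.

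For (ii), I check the balancing condition at each $(d-1)$-cone of $\tilde\Delta$. Codimension-one cones of $\tilde\Delta$ come in two flavours: those lying in $\Delta_{d-1}$, and new cones spanned by $v_0$ together with some $T'\in\Delta_{d-2}$. In both cases, the maximal cones of $\tilde\Delta$ on which $u^\sigma$ is non-zero containing a given codim-one cone are exactly the maximal cones of $\Delta_\sigma$ that do, so the balancing relation in $\Delta_\sigma$ lifts directly to balancing in $\tilde\Delta$. Consequently each $u^\sigma$, as well as $w$ extended by zero on the new cones, lies in $\MWd(\tilde\Delta)$, so the identity in (iii) is between two elements of $\MWd(\tilde\Delta)$.

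For (iii), on each $\sigma\in\Delta_d$ only $u^\sigma$ contributes, yielding $w_\sigma$ by construction. On a new cone $\sigma_\tau$, the contributions come from $u^\rho$ for $\rho\in\Delta_d$ with $\tau\leq\rho$; the balancing of $u^\rho$ at $\tau$ inside $\Delta_\rho$ reads
\[ w_\rho\, v_{\rho\setminus\tau} + u^\rho_{\sigma_\tau}\, v_0 \;\in\; \Span(\tau),\]
where $v_{\rho\setminus\tau}$ is the generator of $\rho$ not in $\tau$. Passing to the one-dimensional quotient $V/\Span(\tau)$ solves uniquely for $u^\rho_{\sigma_\tau}$, and summing over $\rho\supset\tau$ gives
\[ \Big(\sum_\rho u^\rho_{\sigma_\tau}\Big) v_0 \;\equiv\; -\sum_\rho w_\rho\, v_{\rho\setminus\tau} \pmod{\Span(\tau)}.\]
The right-hand side vanishes by the balancing of $w$ at $\tau$ in $\Delta$. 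Since $v_0\notin\Span(\tau)$ by the genericity of $v_0$, the sum $\sum_\rho u^\rho_{\sigma_\tau}$ must vanish, matching the zero extension of $w$ on $\sigma_\tau$.

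The main obstacle I anticipate is simply packaging the genericity condition on $v_0$: I need $v_0$ chosen so that $\tilde\Delta$ is an honest fan, every $\Delta_\sigma$ has one-dimensional top-degree Minkowski weight space (equivalently, the unique linear dependence among $v_0,v_1,\ldots,v_d$ has no vanishing coefficients), and $v_0\notin\Span(\tau)$ for every $\tau\in\Delta_{d-1}$. Once this is in hand, the core computation — reducing $\sum_\rho u^\rho_{\sigma_\tau}$ to the balancing condition for $w$ at $\tau$ — is essentially a one-line application of linear algebra in $V/\Span(\tau)$.
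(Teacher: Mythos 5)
Your proof is correct and follows essentially the same route as the paper: uniqueness of $u^\sigma$ via the one-dimensionality of $\MWd(\Delta_\sigma)$ for the boundary fan of a simplicial $(d+1)$-cone, and the identity $w=\sum_\sigma u^\sigma$ via the balancing condition at each $\tau\in\Delta_{d-1}$, where $\sigma_\tau$ is the only ``new'' maximal cone and $v_0\notin\Span(\tau)$ forces the coefficient of $v_0$ to vanish. The paper packages the last step as showing the difference $w-\sum_\sigma u^\sigma$ is the zero Minkowski weight rather than summing the $u^\rho_{\sigma_\tau}$ directly, but the underlying computation is identical.
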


\begin{proof} 
The fan $\Delta_\sigma$ is a fan over a simplicial sphere and hence has $\dim \CH^d(\Delta) = 1$. Then the space of Minkowski weights is also $1$-dimensional, and it is in fact spanned by a weight that is nonzero on every maximal cone of $\Delta_\sigma$. Thus there is a unique \MW  $u^\sigma$ on $\Delta_\sigma$ such that $u^\sigma_\sigma=w_\sigma$. 

Now consider $u = w-\sum_\sigma u^\sigma \in \MWd(\tilde{\Delta})$. This \MW vanishes on cones $\sigma\in\Delta$. Then it must be zero because the only other maximal cone that contains $\tau\in \Delta_{d-1}$ is $\sigma_\tau$. 
\end{proof}

The decomposition of the Minkowski weight $w = \sum_\sigma u^\sigma$ decomposes the mixed volume 
\[ \Vol_w = \sum_{\sigma \in \Delta_d} \Vol_{u^\sigma}.\]
It turns out that this sum is the same as the sum in Theorem~\ref{thm-MV-eval} and Corollaries~\ref{cor-MV-derivative}-\ref{cor-MV-derivative2}.

\subsection{Mixed volumes of star subdivisions}
\label{sec-MV-star-subdivision}
We will consider here star subdivisions at $2$-dimensional cones.
Let $\Delta$ be a fan in $\RR^n$ and let $\pi: \hat\Delta \to \Delta$ be a star subdivision of $\Delta$ at the cone $\tau$ generated by $v_1,v_2$, with the new ray generated by $v_0 = v_1+v_2$. A \MW $w\in \MWd(\Delta)$ gives rise to a \MW $\hat{w}\in\MWd(\hat\Delta)$ by setting $\hat{w}_{\hat\sigma} = w_{\pi(\hat\sigma)}$ for $\hat\sigma\in \hat\Delta_d$.

Pullback of functions defines an $\RR$-algebra homomorphism $\pi^*: \cA(\Delta)\to \cA(\hat\Delta)$ that induces a morphism $\pi^*: \CH(\Delta) \to \CH(\hat\Delta)$. The morphism of Chow rings is injective, and it is an isomorphism in degree $d$ by Lemma~\ref{lem-star-subdivisions}. The maps $\phi_w$ and $\phi_{\hat{w}}$ agree with the pullback map:
\[ \phi_{\hat{w}} (\pi^*(f)) = \phi_w (f).\]
 
We let $\partial_1,\ldots, \partial_N$ generate $\cA(\Delta)$ and $\hat\partial_0,\ldots, \hat\partial_N$ generate $\cA(\hat\Delta)$. Then pullback $\pi^*$ maps 
\[ \partial_i \mapsto \begin{cases} 
\hat\partial_i + \hat\partial_0 & \text{if $i=1,2$} \\ 
\hat\partial_i & \text{if $i=3,\ldots, N.$}
\end{cases}\]

We first find the mixed volume $\Vol_{\hat{w}}$ on $\hat\Delta$ using the basis $\partial_0=\hat\partial_0,\partial_1,\ldots, \partial_N$ for $\cA^1(\hat\Delta)$. Let the dual variables be $x_0, x_1, x_2, \ldots, x_N$, and write the mixed volume $\Vol_{\hat{w}}$ in these variables as
\[ \Vol_{\hat{w}} = V_0(x_1,\ldots,x_N) + x_0 V_1(x_1, \ldots, x_N) + x_0^2 V_2(x_1, \ldots, x_N) + \cdots.\]
We will later change the variables $x_0,\ldots, x_N$ to variables dual to $\hat\partial_0,\ldots,\hat\partial_N$. 

Since 
\[ \hat\partial_1 \hat\partial_2 = (\partial_1- \partial_0)(\partial_2-\partial_0)\]
is zero in $\cA(\hat\Delta)$, it follows that $\Vol_{\hat{w}}$ must satisfy the differential equation
\begin{equation}
  (\partial_1- \partial_0)(\partial_2-\partial_0) \Vol_{\hat{w}} = 0.
  \label{Vol-subd-de}
\end{equation}
We also have the initial conditions
\begin{equation}
  V_0 = \Vol_w, \quad V_1= 0.
  \label{Vol-subd-ic}
\end{equation}
The first of these follows from the requirement that $\Vol_{\hat{w}}$ applied to the pullback by $\pi$ agrees with $\Vol_w$. The second condition comes from the vanishing of $\partial_0^1 \CH^{d-1}(\Delta)$, see Lemma~\ref{lem-star-subdivisions}.

\begin{lemma}\label{lem-Vol-subd}
  There is a unique solution, \(\Vol_{\hat{w}}(x_0, x_1,\ldots, x_N)\),
  to the differential equation~(\ref{Vol-subd-de}) with initial conditions~(\ref{Vol-subd-ic}):
  \[\Vol_{\hat{w}}(x_0, x_1,\ldots, x_N) = D \Vol_{w}(x_1,\dots,x_N)\]
  where
  \[D = \frac{\partial_1 e^{x_0 \partial_2}  -  \partial_2 e^{x_0 \partial_1}}{\partial_1-\partial_2} =1 - \sum_{m=2}^\infty \frac{x_0^m}{m!} \sum_{\substack{\alpha_1,\alpha_2 \geq 1 \\ \alpha_1+\alpha_2=m}}
  \partial_1^{\alpha_1} \partial_2^{\alpha_2}
  % =: \frac{\partial_1 e^{x_0 \partial_2}  -  \partial_2 e^{x_0 \partial_1}}{\partial_1-\partial_2}
  .\]
\end{lemma}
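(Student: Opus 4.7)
The plan is to prove the lemma in two stages: first establish that the prescribed data uniquely determines a solution, then check that the closed-form expression satisfies the differential equation and the initial conditions.

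For uniqueness I would expand the unknown as a power series in $x_0$,
\[ \Vol_{\hat{w}} = \sum_{m\geq 0} \frac{x_0^m}{m!}V_m(x_1,\ldots,x_N), \]
rewrite (\ref{Vol-subd-de}) as $\partial_0^2 \Vol_{\hat{w}} - (\partial_1+\partial_2)\partial_0\Vol_{\hat{w}} + \partial_1\partial_2\Vol_{\hat{w}} = 0$, and extract the two-term recurrence
\[ V_{m+2} = (\partial_1+\partial_2)V_{m+1} - \partial_1\partial_2 V_m \qquad (m\geq 0). \]
Since (\ref{Vol-subd-ic}) fixes $V_0 = \Vol_w$ and $V_1 = 0$, every higher $V_m$ is then determined.

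For the closed form I would verify directly that the operator $D$ applied to $\Vol_w$ satisfies both the PDE and the initial conditions. Setting $x_0 = 0$ in $D$ collapses it to $(\partial_1 - \partial_2)/(\partial_1 - \partial_2) = 1$, so $V_0 = \Vol_w$; differentiating $D$ once in $x_0$ and then setting $x_0 = 0$ yields $(\partial_1\partial_2 - \partial_2\partial_1)/(\partial_1-\partial_2) = 0$, so $V_1 = 0$. For the PDE itself, the key telescoping is
\[ (\partial_2-\partial_0)D = \frac{-\partial_2^2 e^{x_0\partial_1} + \partial_1\partial_2 e^{x_0\partial_1}}{\partial_1-\partial_2} = \partial_2 e^{x_0\partial_1}, \]
using $\partial_0 e^{x_0\partial_j} = \partial_j e^{x_0\partial_j}$ inside the quotient, after which $(\partial_1-\partial_0)(\partial_2 e^{x_0\partial_1}) = \partial_2(\partial_1 - \partial_1)e^{x_0\partial_1} = 0$.

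The only delicate point is making sense of $D$ as a formal power series in the derivative operators $\partial_1,\partial_2$: for a general such series this would be problematic, but the term of $D$ proportional to $x_0^m$ has total order $m$ in $\partial_1,\partial_2$, so only the finitely many terms with $m\leq d$ survive when $D$ is applied to the degree-$d$ polynomial $\Vol_w$, and the output is a degree-$d$ polynomial in $x_0,x_1,\ldots,x_N$. Beyond this observation I do not foresee a genuine obstacle: most of the content is already packaged into the discussion preceding the lemma, which justifies that $\Vol_{\hat{w}}$ itself satisfies (\ref{Vol-subd-de}) and (\ref{Vol-subd-ic}), and the two verifications above are then short formal calculations.
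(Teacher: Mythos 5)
Your proposal is correct and follows essentially the same route as the paper: uniqueness by solving for the coefficients of the $x_0$-expansion inductively from the differential equation, verification of the PDE by the same telescoping computation $(\partial_2-\partial_0)D = \partial_2 e^{x_0\partial_1}$ followed by $(\partial_1-\partial_0)\partial_2 e^{x_0\partial_1}=0$ modulo operators ending in $\partial_0$, and the initial conditions read off from the series form of $D$. Your added remark that only the terms of $D$ with $m\leq d$ act nontrivially on the degree-$d$ polynomial $\Vol_w$ is a harmless (and welcome) extra justification that the paper leaves implicit.
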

\begin{proof}
  The solution \(\Vol_{\hat{w}}\) is unique since we can solve for \(V_2, V_3, \ldots\) inductively using Equation~(\ref{Vol-subd-de}).
  Now, we show \(D \Vol_{w}\) satisfies the differential equation~(\ref{Vol-subd-de}):
  \[ (\partial_1- \partial_0)(\partial_2-\partial_0) D \Vol_{w} = 0. \]
  %We have
  %\[D = \frac{\partial_1 e^{x_0 \partial_2} - \partial_2 e^{x_0 \partial_1}}{\partial_1-\partial_2}\]
  %by expanding the exponentials in the numerator and factoring out \(\partial_1 - \partial_2\).
  We simplify $(\partial_1- \partial_0)(\partial_2-\partial_0) D$ using the commutation relation $\partial_0 x_0 = 1 + x_0 \partial_0$. Since $\Vol_w$ does not involve $x_0$, we may ignore terms in the derivative that end in $\partial_0$. Modulo such terms, we show that \((\partial_1 - \partial_0)(\partial_2 - \partial_0) D = 0\):
  \begin{align*}
    &(\partial_1 - \partial_0)(\partial_2 - \partial_0) D \\
    =& (\partial_1 - \partial_0) (\partial_2 - \partial_0) \left(
    \frac{\partial_1 e^{x_0 \partial_2} - \partial_2 e^{x_0 \partial_1}}{\partial_1-\partial_2} \right) \\
    =& (\partial_1 - \partial_0) \left(
    \frac{\partial_1 \partial_2 e^{x_0 \partial_2} - \partial_2^2 e^{x_0 \partial_1}}{\partial_1-\partial_2} -
    \frac{\partial_1 \partial_2 e^{x_0 \partial_2} - \partial_1
    \partial_2 e^{x_0 \partial_1}}{\partial_1-\partial_2} \right) \\
    =& (\partial_1 - \partial_0) \left( \partial_2 e^{x_0 \partial_1} \right) \\
    =& \partial_1 \partial_2 e^{x_0 \partial_1} - \partial_1 \partial_2 e^{x_0 \partial_1} = 0
  \end{align*}
  Finally, \(D \Vol_{w}\) satisfies the initial conditions~(\ref{Vol-subd-ic}) by definition of \(D\).
\end{proof}

Let us now go back to the variables $\hat\partial_0,\ldots, \hat\partial_N$, with dual variables $\hat{x}_0,\ldots, \hat{x}_N$.
We have
\[ \partial_i = \begin{cases} 
\hat\partial_0 & \text{if $i=0$}\\
\hat\partial_i + \hat\partial_0 & \text{if $i=1,2$} \\ 
\hat\partial_i & \text{if $i=3,\ldots, N$.}
\end{cases}\]
Then the dual variables $x_0,\ldots, x_N$ must satisfy $\partial_i(x_j)=\delta_{i,j}$.  From this we get 
 \[ x_i = \begin{cases} 
\hat{x}_0 - \hat{x}_1- \hat{x}_2 & \text{if $i=0$,}\\
\hat{x}_i & \text{otherwise.}
\end{cases}\]
The mixed volume $\Vol_{\hat{w}}(\hat{x}_0, \hat{x}_1,\ldots, \hat{x}_N)$ is then obtained from $D  \Vol_{w}(x_1,\ldots, x_N)$ by performing the change of variables above:
\[ \Vol_{\hat{w}}(\hat{x}_0, \hat{x}_1,\ldots, \hat{x}_N)
= \left.
    \frac{\partial_1 e^{x_0 \partial_2} - \partial_2 e^{x_0 \partial_1}}{\partial_1-\partial_2} \Vol_{w}(x_1,\ldots, x_N)
\right|_{\substack{
    x_0 = \hat{x}_0 - \hat{x}_1- \hat{x}_2, \\
    x_1= \hat{x}_1 , \dots, x_N = \hat{x}_N}
}.\]
Finally, let us change the names $\hat\partial_i$ to $\partial_i$ and $\hat{x}_i$ to $x_i$. Then

\begin{theorem} \label{thm-MV-star-subd}
The mixed volume $\Vol_{\hat{w}}$ on the star subdivision $\hat{\Delta}$ is 
\[ \Vol_{\hat{w}}(x_0, x_1,\ldots, x_N)
    = \frac{\partial_1 e^{z \partial_2}  -  \partial_2 e^{z \partial_1}}{\partial_1-\partial_2} 
 \Vol_{w}(x_1,\ldots, x_N)|_{z = x_0 - x_1- x_2}.\]
\end{theorem}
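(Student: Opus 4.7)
The plan is to combine Lemma~\ref{lem-Vol-subd} with a linear change of variables, so most of the substantive work is already packaged into that lemma. The main tasks are (a) justifying that the hypotheses of the lemma apply in the right intermediate basis, and (b) executing the change of variables cleanly.

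First I would work in the intermediate basis $\partial_0 = \hat\partial_0, \partial_1,\ldots, \partial_N$ of $\cA^1(\hat\Delta)$, which is legitimate since $\hat\partial_i = \partial_i - \partial_0$ for $i=1,2$ while $\hat\partial_i = \partial_i$ otherwise. Let $x_0,\ldots, x_N$ denote the dual variables and expand
\[ \Vol_{\hat{w}} = V_0(x_1,\ldots,x_N) + x_0 V_1(x_1,\ldots,x_N) + x_0^2 V_2(x_1,\ldots,x_N) + \cdots. \]
I would then verify the two inputs to Lemma~\ref{lem-Vol-subd}: the Stanley--Reisner relation $\hat\partial_1\hat\partial_2 = 0$ translates to $(\partial_1-\partial_0)(\partial_2-\partial_0) = 0$ in $\cA(\hat\Delta)$, forcing the differential equation (\ref{Vol-subd-de}) on $\Vol_{\hat{w}}$. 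The initial condition $V_0 = \Vol_w$ comes from the requirement that $\Vol_{\hat{w}} \circ \pi^* = \Vol_w$, and the condition $V_1 = 0$ is exactly the statement that $\hat\partial_0\cdot \CH^{d-1}(\Delta)$ vanishes in $\CH^d(\hat\Delta)$, which is the $j=2$ case of the orthogonal decomposition in Lemma~\ref{lem-star-subdivisions}. With these in hand, Lemma~\ref{lem-Vol-subd} immediately gives
\[ \Vol_{\hat{w}}(x_0,x_1,\ldots,x_N) = \frac{\partial_1 e^{x_0\partial_2} - \partial_2 e^{x_0\partial_1}}{\partial_1-\partial_2}\,\Vol_w(x_1,\ldots,x_N). \]

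Next I would perform the linear change of variables back to the natural basis $\hat\partial_0,\ldots,\hat\partial_N$ with dual variables $\hat x_0,\ldots,\hat x_N$. The relations $\partial_0 = \hat\partial_0$, $\partial_i = \hat\partial_i + \hat\partial_0$ for $i=1,2$, and $\partial_i = \hat\partial_i$ for $i\geq 3$ force, via $\partial_i(x_j) = \delta_{ij}$, the dual relations $x_0 = \hat x_0 - \hat x_1 - \hat x_2$ and $x_i = \hat x_i$ otherwise. Substituting this into the formula above, and then renaming $\hat x_i$ back to $x_i$, yields exactly the statement of the theorem with $z = x_0 - x_1 - x_2$.

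The main obstacle is purely bookkeeping: one has to keep careful track of the two bases and their duals to ensure the substitution $x_0 = \hat{x}_0 - \hat{x}_1 - \hat{x}_2$ is oriented correctly. There is no analytic difficulty, since Lemma~\ref{lem-Vol-subd} does all the work of producing the closed-form operator $D$, and the uniqueness statement in that lemma guarantees no other solutions can interfere.
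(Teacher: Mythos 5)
Your proposal is correct and follows essentially the same route as the paper: establish the differential equation $(\partial_1-\partial_0)(\partial_2-\partial_0)\Vol_{\hat w}=0$ and the initial conditions $V_0=\Vol_w$, $V_1=0$ in the intermediate basis, invoke Lemma~\ref{lem-Vol-subd} for the closed-form solution, and then change to the dual variables of $\hat\partial_0,\ldots,\hat\partial_N$ to obtain $z=x_0-x_1-x_2$. No gaps; this matches the paper's argument step for step.
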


\begin{remark}\label{lem-Vol-subd-jgt2}
  One can prove a similar formula in the case of a star subdivision at a cone of dimension $j>2$ with rays generated by $v_1,\dots, v_j$. In this case the derivative $D$ is 
  \[D = 1 - {(-1)}^j \sum_{m=j}^\infty \frac{z^m}{m!}
    \sum_{\substack{\alpha_1,\dots,\alpha_j \geq 1 \\ \alpha_1+\cdots+\alpha_j=m}}
    \partial_1^{\alpha_1} \cdots \partial_j^{\alpha_j},\]
    and the mixed volume is
\[
 \Vol_{\hat{w}}(x_0, x_1,\ldots, x_N)
    =  D \Vol_{w}(x_1,\ldots, x_N)|_{z = x_0 - x_1-\ldots - x_j}.\]
\end{remark}

\section{Matroids}

Let $M$ be a matroid of rank $d+1$ on the set $E=\{1,2,\ldots, n+1\}$ as in the introduction. The Bergman $\Delta_M$ of $M$ is a $d$-dimensional fan embedded in $\RR^n$. The Minkowski weight $(w_\sigma=1)$ spans the space $\MWd$ of $\Delta_M$. We denote the mixed volume corresponding to this weight by $\Vol_M$.

\subsection{Mixed volume  by evaluation}

We prove Theorem~\ref{thm-MV-first-computation} by applying Corollary~\ref{cor-MV-derivative2}. To compute $\Vol_M$, we first choose a general linear projection of the fan $\Delta_M$ to $\RR^d$. This is equivalent to mapping $e_i\mapsto v_i \in \RR^d$ so that the image of $e_E$, which is $\sum_i v_i$, is zero. 

To apply Corollary~\ref{cor-MV-derivative2}, we also need to choose a general vector $v_0\in\RR^d$. The resulting mixed volume does not depend on this vector. Since the projection is general, we may choose $v_0=(0,\ldots,0,1)$. Then the determinant of $A_\sigma$ in Corollary~\ref{cor-MV-derivative2} is equal to a $(d-1)\times(d-1)$ minor in $A_\sigma$. These minors are the determinants in Theorem~\ref{thm-MV-first-computation}.

\subsection{Mixed volume by deletion}

Consider the deletion of $i$ from $M$. As explained in the introduction, we may assume that $i$ is a flat of $M$. Recall the set 
\[ S_i = \{ F_1, F_2,\ldots, F_k\}.\]
Projection from $\Span e_i$ defines a map of Bergman fans $\Delta_M \to \Delta_{M\bs i}$.
We first prove the claim that this map can be factored as
\[ \Delta_M=\Delta_0 \to \Delta_1 \to \cdots \to \Delta_k \to \Delta_{M\bs i},\]
such that for $j=1,\ldots, k$, the map $\Delta_{j-1}\to \Delta_j$ is the star subdivision at the $2$-dimensional cone of $\Delta_j$ with rays corresponding to flats $i,F_j$, and the new ray in $\Delta_{j-1}$ corresponding to the flat $F_j\cup i$. We start by defining the fans $\Delta_j$. 

Let $P$ be a poset. To the poset $P$ is associated a simplicial fan (equivalently, a simplicial complex) where rays correspond to elements in $P$ and cones correspond to chains of these elements in $P$.  Now suppose we have an equivalence relation on $P$, partitioning it into equivalence classes. We define a new simplicial fan in which rays correspond to equivalence classes and cones are images of chains in $P$. To construct the image of a chain, we replace every element in the chain with its equivalence class. This defines an abstract simplicial fan.

Now consider the poset $P=\cLnt(M)$.
Chains in $\cLnt(M)$ correspond to cones in the Bergman fan $\Delta_0=\Delta_M$. Let us first define the fans $\Delta_1,\ldots, \Delta_k$ abstractly, describing a cone as a subset of rays. Let $\Delta_j$ be obtained from the poset $\cLnt(M)$ by declaring $F_l$ equivalent to $F_l \cup i$ for $l=1,\ldots,j$.
 % We may also define $\Delta_{k+1} = \Delta_{M\bs i}$ by setting in addition $i$ equivalent to $\emptyset$. 
 It is then clear that equivalence classes of $\cL(M)$ in the construction of $\Delta_j$ are either singletons or pairs $\{F_l, F_l \cup i\}$ for $l=1,\ldots, j$. Cones in $\Delta_j$ are the images of chains in $\cLnt(M)$. To simplify notation, let us use a flag $\cF$ in $\cLnt(M)$ to denote its image cone in $\Delta_j$.

The map $\Delta_{j-1}\to \Delta_j$ can be viewed as an edge contraction. It contracts the $2$-dimensional cone $F_j < F_j\cup i$ in $\Delta_{j-1}$ to the ray corresponding to the equivalence class $\{F_j, F_j\cup i\}$ in $\Delta_j$. We need to check that this edge contraction is really a star subdivision. Let us start with a lemma.

 \begin{lemma}
For any $F_j \in S_i$, intersection with $F_j$ defines an isomorphism of posets
\[ \cap F_j: \{ G\in \cL(M)| i \leq G \leq F_j\cup i\} \to \{ G\in \cL(M)| \emptyset\leq G\leq F_j\}.\]
\end{lemma}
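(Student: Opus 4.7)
The plan is to exhibit the map $\psi: H \mapsto H \cup \{i\}$ as an explicit two-sided inverse to $\cap F_j$, working entirely at the level of subsets of $E$ (closures of flats do not need to be taken). First I would record the easy facts: intersection of flats is a flat, so $G \cap F_j \in \cL(M)$ and the map $\cap F_j$ lands in $\{H : \emptyset \leq H \leq F_j\}$. Since $F_j$ and $F_j \cup i$ are distinct flats, the element $i$ does not lie in $F_j$. The map $\cap F_j$ is clearly order-preserving.

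The main content is to show that $H \cup \{i\}$ is already a flat whenever $H$ is a flat with $H \leq F_j$, so that $\psi$ genuinely lands in $\{G : i \leq G \leq F_j \cup i\}$. This is the one step that uses the hypothesis $F_j \in S_i$ in an essential way. I would argue by contradiction: suppose $\text{cl}(H \cup \{i\}) \supsetneq H \cup \{i\}$ and pick $x$ in the difference. Because $F_j \cup \{i\}$ is a flat containing $H \cup \{i\}$, its closure lies inside $F_j \cup \{i\}$, forcing $x \in F_j \setminus H$. Now $H$ is a flat and $x \notin H$, so $\text{rk}(H \cup \{x\}) = \text{rk}(H)+1$; similarly $\text{rk}(H \cup \{i\}) = \text{rk}(H)+1$ since $i \notin F_j \supseteq H$. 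On the other hand $x \in \text{cl}(H \cup \{i\})$ gives $\text{rk}(H \cup \{i,x\}) = \text{rk}(H \cup \{i\}) = \text{rk}(H)+1$, and the same rank equals $\text{rk}(H \cup \{x\})$, so $i \in \text{cl}(H \cup \{x\}) \subseteq F_j$, contradicting $i \notin F_j$. This rank/closure bookkeeping is the only nontrivial step; everything else is formal.

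Finally I would check the two compositions. For $H \leq F_j$, since $i \notin F_j$ we have $(H \cup \{i\}) \cap F_j = H$. For $G$ with $i \leq G \leq F_j \cup i$, every element of $G$ lies in $F_j \cup \{i\}$, so $G \subseteq (G \cap F_j) \cup \{i\}$, and the reverse inclusion is immediate from $i \in G$ and $G \cap F_j \subseteq G$; hence $(G \cap F_j) \cup \{i\} = G$. Both $\cap F_j$ and $\psi$ obviously preserve inclusion, so we obtain the claimed poset isomorphism.

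The principal obstacle is the flatness step in the middle paragraph; it is a submodularity/exchange argument that depends crucially on $F_j \cup \{i\}$ being a flat. Once that is in hand, the rest of the lemma reduces to verifying that the two natural set-theoretic maps cancel, which uses only $i \notin F_j$ and $G \subseteq F_j \cup \{i\}$.
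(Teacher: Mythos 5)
Your proof is correct, and its overall skeleton matches the paper's: both exhibit $G\mapsto G\cup i$ as the two-sided inverse, and both identify the flatness of $H\cup i$ for flats $H\leq F_j$ as the only nontrivial point. Where you diverge is in how that point is justified. The paper stays entirely inside the lattice of flats (consistent with its stated convention of defining a matroid by $\cL(M)$): if $\mathrm{cl}(G\cup i)\supsetneq G\cup i$, then writing $\mathrm{cl}(G\cup i)=G'\cup i$ with $G'=F_j\cap\mathrm{cl}(G\cup i)$ a flat strictly containing $G$, one gets a chain of distinct flats $G\subsetneq G'\subsetneq G'\cup i$ contradicting the axiom that $\mathrm{cl}(G\cup i)$ covers $G$. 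You instead run a rank computation: $\mathrm{rk}(H\cup x)=\mathrm{rk}(H\cup i)=\mathrm{rk}(H\cup\{i,x\})=\mathrm{rk}(H)+1$ forces $i\in\mathrm{cl}(H\cup x)\subseteq F_j$, contradicting $i\notin F_j$. Your route is more elementary in that it uses only the rank/closure axioms rather than the covering property of the lattice of flats (which itself is usually derived from them), at the cost of being slightly longer; the paper's version is the cleaner one if one takes the lattice axioms as primitive, which is the framework the paper has adopted. Either argument is complete and both correctly isolate where the hypothesis $F_j\in S_i$ (equivalently, $F_j\cup i$ being a flat and $i\notin F_j$) enters.
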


\begin{proof}
The map is well-defined because intersection of flats is again a flat. Its inverse is defined by $G\mapsto G\cup i$. Let us check that the inverse map is well-defined. If $G\cup i$ is not a flat, then its closure is $G'\cup i$ for some $G\subsetneq G' \subseteq F$. 
This $G' = F_j\cap (G'\cup i)$ is also a flat, hence we have a chain of distinct flats
\[ G \subsetneq G' \subsetneq G'\cup i,\]
which violates the matroid axiom stating that the closure of $G\cup i$ must be a minimal flat above $G$.
\end{proof}

Recall that the flats $F_j\in S_i$ are ordered so that if $F_l\subseteq F_j$ then $l\leq j$. This implies that in $\Delta_{j-1}$ we have declared $G$ equivalent to $G\cup i$ for all $\emptyset < G < F_j$ as in the lemma. In particular, chains in the interval $(i, F\cup i) \subseteq \cL(M)$ map to the same cones in $\Delta_{j-1}$ as chains in the interval $(\emptyset, F) \subseteq \cL(M)$.  To construct $\Delta_j$, we also declare $F_j$ equivalent to $F_j\cup i$.

\begin{proposition} \label{prop-star-subd}
For $j=1,\ldots, k$, the fan $\Delta_{j-1}$ is the star subdivision of $\Delta_j$ at a $2$-dimensional cone $\tau$. The cone $\tau$ has rays $i$ and the equivalence class $\{F_j, F_j\cup i\}$; it is the image in $\Delta_{j}$ of the chain $i<F_j \cup i$ in $\cLnt(M)$.
\end{proposition}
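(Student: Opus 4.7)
My plan is to apply the star-subdivision characterization from Section~\ref{sec-star-subdivisions}. Let $\tau\in\Delta_j$ be the $2$-cone that is the image of the chain $i<F_j\cup i$; its rays are $\rho_i$ and the equivalence class ray $\rho_{\{F_j,F_j\cup i\}}$. Passing from $\Delta_j$ to $\Delta_{j-1}$ splits this class into the singletons $\{F_j\}$ and $\{F_j\cup i\}$, and I take $\rho_{F_j\cup i}$ as the proposed new ray. By Section~\ref{sec-star-subdivisions} it suffices to establish the abstract-fan identity
\[ \Link_{\Delta_{j-1}} \rho_{F_j\cup i} = \Pi^1 \times L, \]
where $\Pi^1$ is the boundary of $\tau$ (the two rays $\rho_i,\rho_{F_j}$) and $L = \Link_{\Delta_j}\tau$.

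To describe $L$, I enumerate chains in $\cLnt(M)$ whose image in $\Delta_j$ contains $\tau$. Since $i$ and $F_j$ are incomparable in $\cL(M)$, any such chain contains $i$ and $F_j\cup i$ but not $F_j$. Applying the preceding lemma $[\emptyset, F_j] \isomto [i, F_j\cup i]$, the chain must take the form $i < H_1\cup i < \cdots < H_q\cup i < F_j\cup i < K_1 < \cdots < K_r$ with $\emptyset < H_s < F_j$ and $F_j\cup i < K_t < E$ flats. Removing the rays of $\tau$ identifies $L$ with the order complex of the poset
\[ P := \{H\in\cL(M) : \emptyset<H<F_j\} \sqcup \{K\in\cL(M) : F_j\cup i<K<E\},\]
where every $H$ lies below every $K$. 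By the lemma each $H\in P$ equals $F_l$ for some $l<j$, so gives the ray $\{F_l, F_l\cup i\}$ in $\Delta_{j-1}$; each $K$ gives a singleton ray $\{K\}$.

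On the other side, a chain in $\cLnt(M)$ containing $F_j\cup i$ decomposes below $F_j\cup i$ as a non-$i$ part $G_1<\cdots<G_p$ (flats with $G_s\leq F_j$, $i\notin G_s$) followed by an $i$-part $i\cup H_1'<\cdots<i\cup H_q'$ ($H_s'$ flats, $H_s'<F_j$). The compatibility $G_p\leq H_1'$ combined with nontriviality of flats forces: if $G_p=F_j$ then the $i$-part is empty, and if $H_1'=\emptyset$ (i.e., $i\cup H_1'=i$) then the non-$i$ part is empty. Hence exactly one of three cases holds --- (I) $G_p=F_j$ and no $i$-part; (II) $H_1'=\emptyset$ and no non-$i$ part; (III) neither. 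These match the three cone types $\{F_j\}\times\sigma$, $\{i\}\times\sigma$, $\emptyset\times\sigma$ of $\Pi^1\times L$, with $\sigma$ determined by the flats of $P$ appearing among the $G_s$'s, $H_t'$'s, and $K_t$'s.

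The main obstacle, essentially bookkeeping, arises in case (III): a flat $H\in(\emptyset,F_j)$ may appear both as $G_s=H$ in the non-$i$ part and as $i\cup H_t'=i\cup H$ in the $i$-part, collapsing to the single ray $\{H,H\cup i\}$ in $\Delta_{j-1}$. I will verify that the image cone depends only on the set of flats from $P$ represented (not on the choice of side on which each $H$ is placed), and that every cone of $\Pi^1\times L$ is realized by at least one chain --- for instance, by placing all $H$'s in the non-$i$ part. Once the link decomposition is established, Section~\ref{sec-star-subdivisions} both yields the star-subdivision structure of $\Delta_{j-1}$ and identifies the contracted base as $\Delta_j$, since the edge contraction along $(\rho_{F_j},\rho_{F_j\cup i})$ is precisely the equivalence introduced at step $j$.
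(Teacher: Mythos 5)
Your proof is correct and follows essentially the same route as the paper: both reduce the claim to the abstract identity $\Link_{\Delta_{j-1}}\rho_{F_j\cup i}=\Pi^1\times\Link_{\Delta_j}\tau$ and establish it via the interval isomorphism $[\emptyset,F_j]\cong[i,F_j\cup i]$ together with the trichotomy of chains through $F_j\cup i$ that contain $F_j$, contain $i$, or contain neither. Your case (III) bookkeeping simply makes explicit what the paper handles implicitly through the identifications $F_l\sim F_l\cup i$ for $l<j$.
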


\begin{proof}
As explained in Section~\ref{sec-star-subdivisions}, to prove that $\Delta_{j-1}$ is a star subdivision of $\Delta_j$ with the new ray $F_j\cup i$, we need to show that, as abstract fans,
\[ \Link_{\Delta_{j-1}} F_j\cup i = \Pi^1 \times L,\]
Where $\Pi^1$ is the $1$-dimensional fan with rays $F_j, i$, and $L=\Link_{\Delta_j} \tau$. 

The open star $\Staro{\Delta_{M}}(F_j \cup i)$ consists of all chains in $\cLnt(M)$ that contain $F_j \cup i$, and hence $\Link_{\Delta_{M}}(F_j \cup i)$ consists of all these chains with $F_j \cup i$ removed. 
The images of these chains in $\Delta_{j-1}$ give the open star and the link of $F_j \cup i$ in $\Delta_{j-1}$.

Chains in $\cLnt(M)$ that contain $F_j \cup i$ can be divided into three disjoint sets: those containing $F_j$, $i$, or neither. The previous lemma now implies that the three sets of chains have the same images in $\Delta_{i-1}$ if in the first set we remove $F_j, F_j\cup i$ from each chain, in the second set we remove $i, F_j\cup i$, and in the third set we remove $F_j\cup i$. These images form the subfan $L \subseteq \Delta_{j-1}$. The images of the second set of chains also gives $\Link_{\Delta_j} \tau$ if we view $\tau$ as the image of $i < F_j \cup i$ in $\Delta_j$.
\end{proof}

Let us now define the embedding of the fans $\Delta_j$ for $j=1,\ldots, k$. The Bergman fan $\Delta_M=\Delta_0$ is embedded in $\RR^n$ so that the ray $F$ is generated by $e_F\in\RR^n$. We define the embedding $\Delta_j\hookrightarrow \RR^n$ for $j=1,\dots, k$ so that the ray corresponding to the equivalence class $\{F_l, F_l\cup i\}$ is generated by $e_{F_l} \in \RR^n$, $l=1,\ldots, j$; the other rays corresponding to equivalence classes $\{F\}$ are generated by $e_F$. 

Viewing $\Delta_{j-1}\to \Delta_j$ as an edge contraction, the embeddings of the two fans are such that we contract the $2$-dimensional cone $F_j < F_j\cup i$ to the ray $F_j$. Since $e_{F\cup i} = e_F+e_i$, the fan $\Delta_{j-1}$ is the star subdivision of the fan $\Delta_j$ in $\RR^n$ as considered above. Note that since the projection from $e_i$ maps $e_{F\cup i}$ and $e_{F}$ to the same vector, the projection map $\Delta_M\to \Delta_{M\bs i}$ factors through $\Delta_j$:
\[ \Delta_M=\Delta_0 \to \Delta_1 \to \cdots \to \Delta_k\to \Delta_{M\bs i}.\]

\begin{remark}
The proof of the proposition shows that $\Link_{\Delta_M} (F_j < F_j\cup i)$ and $\Link_{\Delta_j} \tau$ are the same fans. The first of these is the product of two fans, namely the Bergman fans of the matroids $M^{F_j}$ and $M_{F_j\cup i}$. The embedding of the link is a product embedding. The link of $\tau$ in $\Delta_j$ is the same product fan with the same product embedding. This can be used to deduce Hard Lefschetz theorem and Hodge Riemann bilinear relations for the link of $\tau$ using induction on dimension.
\end{remark}

We next consider how to compute the mixed volume of $\Delta_{j-1}$ from the mixed volume of $\Delta_j$. 
Theorem~\ref{thm-MV-star-subdiv} in Section~\ref{sec-MV-star-subdivision} describes this for a star subdivision. It remains to consider the last step in the factorization $\Delta_k \to \Delta_{M\bs i}$ and prove Theorems~\ref{MV-non-coloop}-\ref{MV-coloop}.

Let $\pi:\Delta_M \to \Delta_{M\bs i}$ be the map of fans defined by projection from $e_i$. Let $\pi^*: \cA(\Delta_{M\bs i}) \to \cA(\Delta_M)$ be the $\RR$-algebra homomorphism given by pullback of functions.

\begin{lemma}
The map $\pi^*$ induces a homomorphism of $\RR$-algebras
\[ \pi^*: \CH(M\bs i) \to \CH(M).\]
The homomorphism is compatible with the degree maps in the following sense. If $i$ is not a coloop, then 
\[ deg_M\circ\pi^* = \deg_{M\bs i}.\]
If $i$ is a coloop, then 
\[ deg_M\circ (\partial_i \cdot \pi^*) = deg_M\circ (\partial_{E\bs i} \cdot \pi^*) =\deg_{M\bs i}.\]
In particular, if $i$ is not a coloop, then $\pi^*$ is injective. If $i$ is a coloop then $\partial_i \cdot \pi^*$ and $\partial_{E\bs i} \cdot \pi^*$ are injective.
\end{lemma}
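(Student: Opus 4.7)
\emph{Plan.} The lemma splits into three assertions: $\pi^*$ descends to Chow rings, the degree maps are compatible (with two cases), and the injectivity claims. I would argue them in this order.

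First, descent. Since $\pi$ is a linear projection of vector spaces $\RR^E/e_E \to \RR^{E\bs i}/e_{E\bs i}$, the pullback of a globally linear function is globally linear, so $\pi^*$ sends $\bfm_{M\bs i}\cA(\Delta_{M\bs i})$ into $\bfm_M\cA(\Delta_M)$ and hence induces a ring homomorphism on Chow rings.

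Next, degree compatibility. I would invoke the factorization $\Delta_M = \Delta_0 \to \cdots \to \Delta_k \to \Delta_{M\bs i}$ from Proposition~\ref{prop-star-subd}. Each of the first $k$ arrows is a star subdivision at a $2$-dimensional cone, so by Lemma~\ref{lem-star-subdivisions} the pullback $\CH(\Delta_j) \to \CH(\Delta_{j-1})$ is an isomorphism in top degree $d$ (the extra summand $\hat\partial_0 \CH$ vanishes there because the link of a $2$-cone has dimension $d-2$), and degrees thus agree tautologically across those steps. This reduces the problem to the final arrow $\pi: \Delta_k \to \Delta_{M\bs i}$. On generators, a direct computation shows $\pi^*(\partial_G) = \partial_{\overline G}$ for $G \in \cLnt(M\bs i)$, since $\partial_G(\pi(e_F))$ equals $1$ exactly when $F$ is the unique ray of $\Delta_k$ with $F\bs i = G$. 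Consequently, for a maximal chain $\sigma = (G_1 < \cdots < G_r)$ in $\cLnt(M\bs i)$, $\pi^*(\partial_\sigma) = \partial_{\overline{G_1}} \cdots \partial_{\overline{G_r}}$. In the non-coloop case ($r = d$) the closures $\overline{G_1} < \cdots < \overline{G_d}$ form a maximal chain in $\cLnt(M)$ by strict monotonicity and injectivity of the closure map, so $\deg_M(\pi^*(\partial_\sigma)) = 1 = \deg_{M\bs i}(\partial_\sigma)$. In the coloop case ($r = d-1$) the chain extends to a maximal chain in $\cLnt(M)$ in exactly two ways: prepending $i$ to give $i < G_1 \cup i < \cdots < G_{d-1} \cup i$, or appending $E\bs i$. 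Multiplication by $\partial_i$ in $\CH(\Delta_k)$ realizes the first extension, because the identifications $F_l \sim F_l\cup i$ built into $\Delta_k$ place $\rho_i$ into a common maximal cone with $\rho_{\overline{G_1}}, \ldots, \rho_{\overline{G_{d-1}}}$ coming precisely from this extended chain; multiplication by $\partial_{E\bs i}$ realizes the second. In either case the resulting top-degree class corresponds to a maximal chain of $\cLnt(M)$, yielding the required degree $1$.

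Injectivity is then formal from nondegeneracy of Poincar\'e duality. If $\pi^*(f) = 0$ in the non-coloop case and $g \in \CH(M\bs i)$ has complementary degree, then $\deg_{M\bs i}(fg) = \deg_M(\pi^*(f)\pi^*(g)) = 0$, forcing $f = 0$; the coloop case is identical with $\pi^*$ replaced by $\partial_i \cdot \pi^*$ or $\partial_{E\bs i}\cdot \pi^*$. The main obstacle will be the combinatorial bookkeeping in the coloop case: verifying that the two products $\partial_i \pi^*(\partial_\sigma)$ and $\partial_{E\bs i}\pi^*(\partial_\sigma)$ in $\CH(\Delta_k)$ genuinely correspond to the two extended maximal chains rather than vanishing or coinciding. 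This rests on a careful analysis of the identifications defining $\Delta_k$ together with the matroid-theoretic fact that a coloop $i$ satisfies $i \notin \overline{G}$ for every $G \in \cLnt(M\bs i)$ with $G \neq E\bs i$, ensuring that $\rho_i$ lies in new maximal cones only through the chain starting at $i$.
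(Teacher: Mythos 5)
Your overall strategy is sound and in places cleaner than the paper's, but it diverges from the paper's route in a way that opens one genuine gap. The paper proves the lemma \emph{directly} on $\Delta_M$: it observes that a maximal cone $\sigma$ mapping onto a maximal cone $\pi(\sigma)$ gives $\pi^*(\partial_{\pi(\sigma)})=\partial_\sigma$ (resp.\ $\partial_\sigma=\partial_i\,\pi^*(\partial_{\pi(\sigma)})$ in the coloop case) and concludes by linearity; the factorization through $\Delta_k$ is only invoked \emph{afterwards} to transfer the lemma to $\pi_2:\Delta_k\to\Delta_{M\bs i}$. You instead reduce to $\Delta_k$ first, which does make the ray bookkeeping cleaner (on $\Delta_k$ one genuinely has $\pi_2^*(\partial_G)=\partial_{\overline G}$, whereas on $\Delta_M$ one has $\pi^*(\partial_G)=\sum_{F\bs i=G}\partial_F$ and must check that only the chain of closures survives in the product). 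Your injectivity argument is identical to the paper's. Two caveats on the reduction itself: the claim that degrees ``agree tautologically'' across a star subdivision is really the statement that the constant Minkowski weight pulls back to the constant Minkowski weight (equivalently $\phi_{\hat w}\circ\pi^*=\phi_w$); it is true and the paper uses it, but it deserves a one-line justification, e.g.\ by evaluating on a maximal cone disjoint from the subdivided one.

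The genuine gap is in the coloop case. Reducing $\deg_M\circ(\partial_i\cdot\pi^*)$ to a computation on $\Delta_k$ requires identifying $\partial_i\in\CH^1(M)$ with the pullback of $\partial_i\in\CH^1(\Delta_k)$ under the composite of star subdivisions $\pi_1$. But these are \emph{not} equal: each star subdivision at the cone with rays $\rho_i,\rho_{F_l}$ sends $\partial_i\mapsto\partial_i+\partial_{F_l\cup i}$, so $\pi_1^*\bigl(\partial_i^{(\Delta_k)}\bigr)=\partial_i^{(M)}+\sum_l\partial_{F_l\cup i}^{(M)}$. Hence your computation on $\Delta_k$ establishes $\deg_M\bigl((\partial_i+\sum_l\partial_{F_l\cup i})\,\pi^*(h)\bigr)=\deg_{M\bs i}(h)$, not the claimed identity for $\partial_i\cdot\pi^*$ alone; the correction terms $\deg_M(\partial_{F_l\cup i}\,\pi^*(h))$ are not individually zero (for $h=\partial_\sigma$ with $\sigma$ the chain $G_1<\cdots<G_{d-1}$, the chain $G_1<\cdots<G_j<G_j\cup i<\cdots<G_{d-1}\cup i$ contributes). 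You must either show these corrections cancel, or argue the coloop case directly on $\Delta_M$ as the paper does: expand $\partial_i\prod_j(\partial_{G_j}+\partial_{G_j\cup i})$ and note that comparability with the rank-one flat $i$ forces $F_j=G_j\cup i$ for every $j$, leaving the single maximal-cone monomial of the chain $i<G_1\cup i<\cdots<G_{d-1}\cup i$. The same ``unique surviving chain of closures'' argument is also what makes the paper's terse non-coloop claim $\pi^*(\partial_{\pi(\sigma)})=\partial_\sigma$ correct, and is worth recording explicitly whichever route you take.
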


\begin{proof}
Since global linear functions pull back to global linear functions, $\pi^*$ induces an $\RR$-algebra homomorphism of Chow rings as stated. 

When $i$ is not a coloop, then $\pi$ is a surjective map of fans of the same dimension, taking cones onto cones. If a maximal cone $\sigma$ maps onto a maximal cone $\pi(\sigma)$, then generators of $\sigma$ map to generators of $\pi(\sigma)$. Hence $\partial_{\pi(\sigma)} \in \CH(M\bs i)$ pulls back to $\partial_{\sigma} \in \CH(M)$. Since these monomials have degree $1$, we get the compatibility of the degree maps. 

When $i$ is not a coloop, then $\pi$ reduces dimension by $1$. A maximal cone  $\sigma\in \Delta_ M$ that contains the ray $i$ maps onto a maximal cone in $\Delta_{M\bs i}$. Then $\partial_\sigma = \partial_i \pi^*(\partial_{\pi(\sigma)})$. The degree compatibility follows from this. Similarly for maximal cones containing the ray $E\bs i$.

\Po duality now implies the injectivity of the maps as stated. Let us prove this for  $\partial_i \cdot \pi^*$. If $h\in \CH(M\bs i)$ is a nonzero element, then $deg(hg)\neq 0$ for some $g\in \CH(M\bs i)$. Now  $deg(hg) = deg(\partial_i \pi^*(h) \pi^*(g)) \neq 0$, which implies that $\partial_i \pi^*(h) \neq 0$.
\end{proof}

Consider now the factorization of the map $\pi$:
\[ \Delta_M \stackrel{\pi_1}{ \longrightarrow} \Delta_k \stackrel {\pi_2}{\longrightarrow} \Delta_{M\bs i}.\]
The first map $\pi_1$ is a sequence of star subdivisions. By Lemma~\ref{lem-star-subdivisions}, pullback induces an isomorphims
\[ \pi_1^*: \CH^d(\Delta_k) \isomto \CH^d(M).\]
In fact,  the degree $d$ constant \MW $(w_\sigma=1)$ on $\Delta_M$ maps by the dual of this isomorphism to the degree $d$ constant \MW $(w_\sigma=1)$ on $\Delta_k$. In the previous lemma we may now replace $\pi: \Delta_M \to \Delta_{M\bs i}$ with $\pi_2: \Delta_k \to \Delta_{M\bs i}$. Then the statement and the proof of the lemma apply word-by-word. 

Let us now compute the mixed volume of $\Delta_k$ from the mixed volume of $\Delta_{M\bs i}$. Both mixed volumes are defined using the constant Minkowski weight $(w_\sigma=1)$ on maximal cones. 
First assume that $i$ is not a coloop. The element
\begin{equation} \label{eq-non-coloop}
 \partial_i - \sum_{G\in \cLnt(M\bs i)} b_{\overline{G}} \partial_{\overline{G}}
\end{equation}
vanishes in $\CH(\Delta_k)$. We consider this as a derivative that must annihilate the mixed volume of $\Delta_k$. In addition, we know how the mixed volume acts on the image of  $\pi_2^*: \CH(M\bs i) \hookrightarrow \CH(\Delta_k)$, where the inclusion map is defined by $\partial_G \mapsto \partial_{\overline{G}}$.

\begin{lemma}
Let 
\[ \Vol_{M\bs i}(x_{{G}}) \in \RR[x_{{G}}]_{G\in\cLnt(M\bs i)}.\]
Then there exists a unique polynomial 
\[ \Vol_{\Delta_k} = \Vol_{M\bs i}( x_{\overline{G}} + b_{\overline{G}} x_i) \in \RR[ x_i, x_{\overline{G}}]_{G\in\cLnt(M\bs i)}\]
that is annihilated by the derivative (\ref{eq-non-coloop}) and that satisfies the initial condition
\[ \Vol_{\Delta_k}|_{x_i=0} = \Vol_{M\bs i}(x_{\overline{G}}).\]
\end{lemma}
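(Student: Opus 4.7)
The plan is to prove both existence (with the explicit formula) and uniqueness, by standard arguments for a first-order linear PDE with the transverse initial hyperplane $\{x_i=0\}$.

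For existence, I would directly verify that $V(x_i,x_{\overline{G}}):=\Vol_{M\bs i}(x_{\overline{G}}+b_{\overline{G}}x_i)$ satisfies both conditions. The initial condition $V|_{x_i=0}=\Vol_{M\bs i}(x_{\overline{G}})$ is immediate by substitution. For the PDE, set $y_G:=x_{\overline{G}}+b_{\overline{G}}x_i$; the chain rule gives
\[ \partial_i V=\sum_{G\in\cLnt(M\bs i)}b_{\overline{G}}(\partial_G\Vol_{M\bs i})(y_\bullet),\qquad \partial_{\overline{G}}V=(\partial_G\Vol_{M\bs i})(y_\bullet),\]
from which $\bigl(\partial_i-\sum_{G}b_{\overline{G}}\partial_{\overline{G}}\bigr)V=0$.

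For uniqueness, I would expand an arbitrary polynomial solution $W\in\RR[x_i,x_{\overline{G}}]$ as a finite sum
\[ W=\sum_{m\geq 0}\frac{x_i^m}{m!}\,W_m,\qquad W_m\in\RR[x_{\overline{G}}].\]
The initial condition fixes $W_0=\Vol_{M\bs i}(x_{\overline{G}})$, and substituting into the PDE and matching coefficients of $x_i^m$ yields the recursion
\[ W_{m+1}=\sum_{G\in\cLnt(M\bs i)}b_{\overline{G}}\,\partial_{\overline{G}}W_m.\]
This determines every $W_m$ from $W_0$, proving uniqueness. Since each $W_{m+1}$ has degree one less than $W_m$, the sum terminates, so $W$ is indeed a polynomial (agreeing with $V$ above).

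The argument has no real obstacle: because the differential operator has a nontrivial $\partial_i$-coefficient, the PDE is of Cauchy–Kovalevskaya type along $\{x_i=0\}$, and the only thing to check is the chain-rule identity together with the recursion. The substantive content of Theorem~\ref{MV-non-coloop}, namely that $\Vol_{\Delta_k}$ is in fact characterized by this PDE and this initial condition, is provided by the preceding discussion: the relation~(\ref{eq-non-coloop}) holds in $\CH(\Delta_k)$ (so annihilates any mixed volume), and restriction of mixed volumes along $\pi_2^*:\CH(M\bs i)\hookrightarrow\CH(\Delta_k)$ (which sends $\partial_G\mapsto\partial_{\overline{G}}$) recovers $\Vol_{M\bs i}$, which is exactly the content of setting $x_i=0$.
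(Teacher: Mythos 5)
Your proof is correct and follows essentially the same route as the paper: existence by the chain-rule verification of the explicit substitution, and uniqueness by solving inductively for the coefficients of the powers of $x_i$ from the initial condition and the differential equation. The only difference is that you spell out the recursion $W_{m+1}=\sum_G b_{\overline{G}}\partial_{\overline{G}}W_m$ explicitly, which the paper leaves as a one-line remark.
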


\begin{proof}
Clearly there is a unique such polynomial because we can solve for the coefficients of $x_i, x_i^2,\ldots$ inductively using vanishing of the derivative (\ref{eq-non-coloop}). Let us check that the given polynomial satisfies the differential equation. Applying the chain rule gives
\[ \partial_i \Vol_{M\bs i}( x_{\overline{G}} + b_{\overline{G}} x_i) = \sum_{G\in \cLnt(M\bs i)} b_{\overline{G}} \partial_G \Vol_{M\bs i}( x_{\overline{G}} + b_{\overline{G}} x_i).\]
Again by chain rule, this is the same as 
\[\sum_{G\in \cLnt(M\bs i)} b_{\overline{G}} \partial_{\overline{G}} \Vol_{M\bs i}( x_{\overline{G}} + b_{\overline{G}} x_i).\]
Since the initial condition clearly holds, this finishes the proof of the lemma and Theorem~\ref{MV-non-coloop}.
\end{proof}

Next assume that $i$ is a coloop. Similarly to the case above, we now have a derivative
\begin{equation} \label{eq-coloop}
 \partial_i -\partial_{E\bs i} - \sum_{G\in \cLnt(M\bs i)} b_{\overline{G}} \partial_{\overline{G}}
\end{equation}
that must annihilate the mixed volume of $\Delta_k$. The relation $\partial_i\partial_{E\bs i} = 0$ in $\CH(\Delta_k)$ gives another such derivative. 

\begin{lemma}
Let 
\[ \Vol_{M\bs i}(x_{{G}}) \in \RR[x_{{G}}]_{G\in\cLnt(M\bs i)}.\]
Then there exists a unique polynomial $\Vol_{\Delta_k} \in \RR[ x_i, x_{E\bs i}, x_{\overline{G}}]_{G\in\cLnt(M\bs i)}$,
\[ \Vol_{\Delta_k} = \int \Vol_{M\bs i}(x_{\overline{G}} + b_{\overline{G}} x_i) dx_i + \int \Vol_{M\bs i} (x_{\overline{G}} - b_{\overline{G}} x_{E\bs i}) dx_{E\bs i}\]
that is annihilated by the derivatives (\ref{eq-coloop}) and $\partial_i\partial_{E\bs i}$, and that satisfies the initial condition
\[ \Vol_{\Delta_k} = (x_i+x_{E\bs i}) \Vol_{M\bs i}(x_{\overline{G}}) + \text{terms of degree $\geq 2$ in $x_i, x_{E\bs i}$}.\]
\end{lemma}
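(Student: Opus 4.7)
The plan is to deduce the structure of $\Vol_{\Delta_k}$ from the two differential constraints together with the prescribed initial data, and then recognize the proposed integral expression as the unique polynomial satisfying all three conditions. The relation $\partial_i\partial_{E\bs i}\Vol_{\Delta_k}=0$ forces the polynomial to have no monomial in which both $x_i$ and $x_{E\bs i}$ appear to positive degree, so I would first write
\[
  \Vol_{\Delta_k} = h_0(x_{\overline{G}}) + \sum_{a\geq 1} c_a(x_{\overline{G}})\,x_i^a + \sum_{b\geq 1} d_b(x_{\overline{G}})\,x_{E\bs i}^b.
\]
The initial condition then pins down $h_0=0$ and $c_1=d_1=\Vol_{M\bs i}(x_{\overline{G}})$.

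Next I would apply the operator from (\ref{eq-coloop}), write $D:=\sum_G b_{\overline{G}}\partial_{\overline{G}}$, and compare coefficients of $x_i^a$ and $x_{E\bs i}^b$ separately in the identity $(\partial_i-\partial_{E\bs i}-D)\Vol_{\Delta_k}=0$. This yields the recursions $(a+1)c_{a+1}=D\,c_a$ and $(b+1)d_{b+1}=-D\,d_b$ for $a,b\geq 1$, together with the constant-term equation $c_1-d_1-D\,h_0=0$, which is already consistent with the initial data. Iterating gives
\[
  c_a = \frac{1}{a!}\,D^{a-1}\Vol_{M\bs i}, \qquad d_b = \frac{(-1)^{b-1}}{b!}\,D^{b-1}\Vol_{M\bs i},
\]
so $\Vol_{\Delta_k}$ is uniquely determined by the three conditions.

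To identify this unique solution with the proposed formula, I would invoke Taylor's theorem along the affine direction $(b_{\overline{G}})$:
\[
  \Vol_{M\bs i}(x_{\overline{G}}+b_{\overline{G}}\,x_i) = \sum_{a\geq 0}\frac{x_i^a}{a!}\,D^a\Vol_{M\bs i}(x_{\overline{G}}).
\]
Integrating term-by-term in $x_i$ with zero constant of integration reproduces the $c_a$ above, and performing the parallel computation with $-b_{\overline{G}}\,x_{E\bs i}$ in place of $b_{\overline{G}}\,x_i$ yields the $d_b$, the alternating signs coming from the sign-flipped substitution. Since the two integrals depend on disjoint subsets of the new variables, $\partial_i\partial_{E\bs i}$ annihilates their sum automatically, and the degree-one-in-$(x_i,x_{E\bs i})$ part is $(x_i+x_{E\bs i})\Vol_{M\bs i}$ as required.

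The main obstacle is careful bookkeeping rather than any conceptual difficulty: one must check that the chain-rule identity $\partial_i\Vol_{M\bs i}(x_{\overline{G}}+b_{\overline{G}}x_i)=D\,\Vol_{M\bs i}(x_{\overline{G}}+b_{\overline{G}}x_i)$, together with its sign-flipped counterpart for $x_{E\bs i}$, aligns precisely with the recursion forced by (\ref{eq-coloop}), and that the zero-constant-of-integration convention supplies exactly the prescribed degree-one part and no unwanted degree-zero piece.
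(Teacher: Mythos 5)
Your proof is correct and follows essentially the same route as the paper's: uniqueness by inductively solving for the coefficients of the powers of $x_i$ and $x_{E\bs i}$ (after using $\partial_i\partial_{E\bs i}=0$ to rule out mixed monomials), and existence via the chain-rule identity $\partial_i \Vol_{M\bs i}(x_{\overline{G}}+b_{\overline{G}}x_i)=\sum_G b_{\overline{G}}\partial_{\overline{G}}\Vol_{M\bs i}(x_{\overline{G}}+b_{\overline{G}}x_i)$ and its sign-flipped counterpart. Your Taylor-expansion bookkeeping simply makes explicit, coefficient by coefficient, what the paper's antiderivative manipulation does in one line, so there is no gap.
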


\begin{proof}
Let $V_1 = \Vol_{M\bs i}(x_{\overline{G}} + b_{\overline{G}} x_i)$ and $V_2 = \Vol_{M\bs i} (x_{\overline{G}} - b_{\overline{G}} x_{E\bs i})$. It suffices to prove that applying the derivative (\ref{eq-non-coloop}) to the term with $V_1$ gives 
\[ (\partial_i - \sum_{G\in \cLnt(M\bs i)} b_{\overline{G}} \partial_{\overline{G}}) \int V_1 dx_i = -\Vol_{M\bs i}(x_{\overline{G}}),\]
and applying a similar derivative to the term  with $V_2$ gives
\[ (-\partial_{E\bs i} - \sum_{G\in \cLnt(M\bs i)} b_{\overline{G}} \partial_{\overline{G}}) \int V_2 dx_{E\bs i} = \Vol_{M\bs i}(x_{\overline{G}}).\]
Then (\ref{eq-coloop}) applied to $\Vol_{\Delta_k}$ vanishes. Other statements of the lemma are clear.

Let us prove the first equality. The second one is similar. We assume that the derivative 
(\ref{eq-non-coloop}) applied to $V_1$ vanishes. Then
\begin{align*}
    (\partial_i - \sum_{G\in \cLnt(M\bs i)} b_{\overline{G}} \partial_{\overline{G}}) \int V_1 dx_i &= V_1 - \int \sum_{G\in \cLnt(M\bs i)} b_{\overline{G}} \partial_{\overline{G}} V_1 dx_i \\
    &= V_1 - \int \partial_i V_1 dx_i \\
    &= V_1|_{x_1=0}.
\end{align*}
This finishes the proof of the lemma and Theorem~\ref{MV-coloop}.
\end{proof}

\bibliographystyle{plain}
\bibliography{mixedvol}{}

\begin{thebibliography}{10}

\bibitem{AHK}
Karim Adiprasito, June Huh, and Eric Katz.
\newblock Hodge theory for combinatorial geometries.
\newblock {\em Ann. of Math. (2)}, 188(2):381--452, 2018.

\bibitem{BES}
Spencer Backman, Christopher Eur, and Connor Simpson.
\newblock Simplicial generation of {C}how rings of matroids.
\newblock {\em S\'{e}m. Lothar. Combin.}, 84B:Art. 52, 11, 2020.

\bibitem{BBFK}
Gottfried Barthel, Jean-Paul Brasselet, Karl-Heinz Fieseler, and Ludger Kaup.
\newblock Combinatorial intersection cohomology for fans.
\newblock {\em Tohoku Math. J. (2)}, 54(1):1--41, 2002.

\bibitem{BHM}
Tom Braden, June Huh, Jacob~P. Matherne, Nicholas Proudfoot, and Botong Wang.
\newblock A semi-small decomposition of the {C}how ring of a matroid.
\newblock {\em Adv. Math.}, 409:Paper No. 108646, 49, 2022.

\bibitem{HuhBranden}
Petter Br\"{a}nd\'{e}n and June Huh.
\newblock Lorentzian polynomials.
\newblock {\em Ann. of Math. (2)}, 192(3):821--891, 2020.

\bibitem{BL1}
Paul Bressler and Valery~A. Lunts.
\newblock Intersection cohomology on nonrational polytopes.
\newblock {\em Compositio Math.}, 135(3):245--278, 2003.

\bibitem{Brion}
Michel Brion.
\newblock The structure of the polytope algebra.
\newblock {\em Tohoku Math. J. (2)}, 49(1):1--32, 1997.

\bibitem{Eur}
Christopher Eur.
\newblock Divisors on matroids and their volumes.
\newblock {\em J. Combin. Theory Ser. A}, 169:105135, 31, 2020.

\bibitem{FeichtnerYuzvinsky}
Eva~Maria Feichtner and Sergey Yuzvinsky.
\newblock Chow rings of toric varieties defined by atomic lattices.
\newblock {\em Invent. Math.}, 155(3):515--536, 2004.

\bibitem{FultonSturmfels}
William Fulton and Bernd Sturmfels.
\newblock Intersection theory on toric varieties.
\newblock {\em Topology}, 36(2):335--353, 1997.

\bibitem{KX}
Kalle Karu and Elizabeth Xiao.
\newblock On the anisotropy theorem of {P}apadakis and {P}etrotou.
\newblock {\em Algebr. Comb.}, 6(5):1313--1330, 2023.

\bibitem{NOR}
Lauren Nowak, Patrick O'Melveny, and Dustin Ross.
\newblock Mixed volumes of normal complexes.
\newblock {\em arXiv:2301.05278}, 2023.

\bibitem{Oxley}
James Oxley.
\newblock {\em Matroid theory}, volume~21 of {\em Oxford Graduate Texts in
  Mathematics}.
\newblock Oxford University Press, Oxford, second edition, 2011.

\end{thebibliography}

\end{document}